\newtheorem{theorem}{Theorem}[section]
\newtheorem{proposition}{Proposition}[section]
\newtheorem*{theorem*}{Theorem}
\newtheorem{corollary}{Corollary}[section]
\newtheorem{definition}{Definition}[section]
\newtheorem{lemma}{Lemma}[section]
\newtheorem{remark}{Remark}[section]
\newtheorem{conjecture}{Conjecture}[section]
\newcommand{\eusc}[1]{\EuScript{#1}}
\numberwithin{equation}{section}
\renewcommand{\sf}[1]{\mathsf{#1}}
\newcommand{\deee}{\hspace{2 pt} \mathrm{d}}
\newcommand{\ov}[1]{\overline{#1}}
\newcommand{\mrm}[1]{\mathrm{#1}}
\newcommand{\nml}{\left \vert \left \vert}
\newcommand{\nmr}{\right \vert \right \vert}
\newcommand{\pmp}{p.m.p.\,}
\newcommand{\qqquad}{\qquad \qquad}
\newcommand{\parr}{\hspace{-2.5 pt} \parallel \hspace{-2.5 pt}}
\DeclareMathOperator{\uotimes}{\underline{\otimes}}
\DeclareMathOperator{\ootimes}{\ov{\otimes}}
\DeclareMathOperator{\sqq}{\square}
\begin{document}

\title{Synergodic actions of product groups}
\author{Peter Burton}

\maketitle

\begin{abstract} This article studies a structural aspect of measure-preserving actions of products of countable discrete groups, involving a so-called `synergodic decomposition' in terms of the ergodic components of the actions of the two factor groups. We show that this construction provides a canonical way to detect whether the action is built as a product of actions on independent measure spaces, and we use it to prove a result about convergence of ergodic averages on product groups which are highly imbalanced between the factors. Defining an action to be synergodic if it is isomorphic to its synergodic decomposition, we show that if a countable group $G$ is amenable then every action of $G \times G$ can be approximated by synergodic actions and that this statement fails if $G$ is a nonabelian free group. The last result relies on the refutation of Connes' embedding conjecture.\end{abstract}

\tableofcontents

\section{Introduction}

\subsection{Preliminaries on ergodic theory} \label{subsec.ergprelim}

\subsubsection{Measure-preserving transformations}

We begin by recalling some aspects of the theory of measure preserving transformations. References for this material are Chapter 17 in \cite{MR1321597} and Section 1 of Chapter 1 in \cite{MR2583950}. If $(X,\mu)$ is a standard probability space, we will use the notation $\eusc{M}_X$ for the $\sigma$-algebra of all measurable subsets of $X$, and we define a \textbf{measurable }$\sigma$\textbf{-algebra on }$X$ to be a sub-$\sigma$-algebra of $\eusc{M}_X$. We also write $\eusc{N}_X$ for the trivial $\sigma$-algebra on $X$ given by: \[ \eusc{N}_X = \bigl\{D \in \eusc{M}_X: \mu(D) \in \{0,1\}  \bigr\} \] We let $\mathrm{Aut}(X,\mu)$ denote the group of all probability-preserving transformations of $(X,\mu)$ and if $\mathsf{T} \in \mrm{Aut}(X,\mu)$ and $x \in X$ we will typically simplify notation by writing $\mathsf{T}x$ for the element $\mathsf{T}(x)$ of $X$. Similarly, for $D \in \eusc{M}_X$ we will write $\mathsf{T}D$ for the element $\{\mathsf{T}x: x \in D\}$ of $\eusc{M}_X$. We adopt the convention that elements $\sf{S}$ and $\sf{T}$ of $\mrm{Aut}(X,\mu)$ are identified if $\sf{S}x = \sf{T}x$ for $\mu$-almost every $x \in X$, or equivalently if $\mu(\sf{S}D \triangle \sf{T}D) =0$ for all $D \in \eusc{M}_X$. We also define $D$ to be $\sf{T}$\textbf{-invariant} if $\mu(\sf{T}D \triangle D) = 0$.\\ 
\\
We endow $\mrm{Aut}(X,\mu)$ with the \textbf{weak topology}, which is defined by the stipulation that a sequence $(\sf{T}_n)_{n \in \mathbb{N}}$ of elements of $\mrm{Aut}(X,\mu)$ converges to $\sf{T} \in \mrm{Aut}(X,\mu)$ if and only if we have \[ \lim_{n \to \infty} \mu(\sf{T}_n D \triangle \sf{T}D) = 0 \] for all $D \in \eusc{M}_X$. With this topology, $\mrm{Aut}(X,\mu)$ is a Polish space. While there are other plausible topologies on $\mrm{Aut}(X,\mu)$, we will have no cause to consider them in this article and so any topological assertion about $\mrm{Aut}(X,\mu)$ should be interpreted with respect to the weak topology. 

\subsubsection{Actions of countable discrete groups}

We now recall some aspects of the theory of measurable group actions, and we refer the reader to Chapter 10 of \cite{MR2583950} for more information on these topics. Throughout the article, we will understand all countable groups as discrete.  If $G$ is a countable group, we will refer to a homomorphism from $G$ to $\mrm{Aut}(X,\mu)$ as a \textbf{probability-measure preserving (\pmp \!) action of }$G$\textbf{ on }$(X,\mu)$. We write $\mrm{Act}(G,X,\mu)$ for the set of all \pmp actions of $G$ on $(X,\mu)$ and endow $\mrm{Act}(G,X,\mu)$ with the Polish topology it inherits as a closed subspace of the product space $\mathrm{Aut}(X,\mu)^G$. Given an element $\sf{A}$ of $\mrm{Act}(G,X,\mu)$ and $g \in G$, we will typically write $\sf{A}^g$ for the element $\sf{A}(g)$ of $\mrm{Aut}(X,\mu)$.  \\
\\
We define a set $D \in \eusc{M}_X$ to be $\sf{A}$\textbf{-invariant} if $D$ is $\mathsf{A}^g$-invariant for all $g \in G$. Writing $\eusc{E}_\sf{A}$ for the $\sigma$-algebra of $\sf{A}$-invariant subsets of $X$, we define an action $\sf{A} \in \mrm{Act}(G,X,\mu)$ to be \textbf{ergodic} if $\eusc{E}_\sf{A} = \eusc{N}_X$. We also write $\mathrm{Erg}(G,X,\mu)$ for the subset of $\mrm{Act}(G,X,\mu)$ consisting of ergodic actions and write $\mathsf{I}_X^G$ for the trivial action of $G$ on $(X,\mu)$. If $(Y,\nu)$ is another standard probability space, for $\sf{A} \in \mrm{Act}(G,X,\mu)$ and $\sf{B} \in \mrm{Act}(G,Y,\nu)$, we will consider the \textbf{diagonal action} $\sf{A} \parr \sf{B} \in \mrm{Act}(G,X \times Y,\mu \times \nu)$ given by setting $(\sf{A} \parr \sf{B})^g(x,y) = (\sf{A}^gx,\sf{B}^g y)$ for $(x,y) \in X \times Y$ and $g \in G$. 

\begin{remark} \label{rem.overload} As we will be considering multiple notions of `product actions', all of which could plausibly be denoted by $\sf{A} \times \sf{B}$, we use this nonstandard $\parallel$ notation for diagonal actions to avoid overloading the $\times$ symbol. \end{remark}

\subsubsection{Factor maps and isomorphisms}

If $(X,\mu)$ and $(Y,\nu)$ are standard probability spaces, $\Phi:X \to Y$ is a measurable map and $\eusc{S}$ is a measurable $\sigma$-algebra on $Y$, we define the \textbf{preimage} $\Phi_{\uparrow}[\eusc{S}]$ to be the measurable $\sigma$-algebra $\Phi_{\uparrow}[\eusc{S}]$ on $X$ given by: \[ \Phi_{\uparrow}[\eusc{S}] = \{ \Phi^{-1}(D):D \in \eusc{S} \} \] We also write $\Phi_{\#}\mu$ for the \textbf{pushfoward measure} on $Y$ given by setting $[\Phi_{\#}\mu](D) = \mu(\Phi^{-1}(D))$ for $D \in \eusc{M}_Y$. Now, let $G$ be a countable group, let $\sf{A} \in \mrm{Act}(G,X,\mu)$ and let $\sf{B} \in \mrm{Act}(G,Y,\nu)$. We define a \textbf{factor map from }$\sf{A}$\textbf{ onto }$\sf{B}$ to be a measurable surjection $\Phi:X \twoheadrightarrow Y$ such that $\Phi_{\#}\mu = \nu$ and such that $\Phi \circ \sf{A}^g = \sf{B}^g \circ \Phi$ for all $g \in G$. We also define a factor map $\Phi$ from $\sf{A}$ to $\sf{B}$ be a \textbf{isomorphism} if it is a bijection between $X$ and $Y$. \\
\\
Suppose now that $(Z,\omega)$ is another standard probability space and $\mathsf{C} \in \mrm{Act}(G,Z,\omega)$ is an action which is a factor of $\mathsf{A}$ via the map $\Omega$. We define the actions $\mathsf{B}$ and $\mathsf{C}$ to be \textbf{isomorphic as factors of }$\mathsf{A}$ if there exists an isomorphism $\Psi:Y \to Z$ between $\mathsf{B}$ and $\mathsf{C}$ such that the following diagram commutes.
\[ \begin{tikzcd}& X   \arrow[ddl,"\Phi" above left,two heads]  \arrow[ddr, "\Omega" above right,two heads]  & \\ \\ Y \arrow[rr, "\Psi" ,hook, two heads] & & Z \end{tikzcd} \]

 Note that if $X = Y$ and $\sf{T} \in \mrm{Aut}(X,\mu)$ then $\sf{T}_{\uparrow}[\eusc{S}]$ is a sub-$\sigma$-algebra of $\eusc{M}_X$. Thus for $\sf{A} \in \mrm{Act}(G,X,\mu)$ we may define a sub-$\sigma$-algebra $\eusc{S}$ of $\eusc{M}_X$ to be $\sf{A}$\textbf{-stable} if $\sf{A}^g_{\uparrow}[\eusc{S}] = \eusc{S}$ for all $g \in G$.  A basic construction in ergodic theory is the following `fundamental theorem of factor maps', which appears as Theorem 2.15 in \cite{MR1958753}.

\begin{theorem} \label{thm.factor} Let $G$ be a countable group, let $(X,\mu)$ be a standard probability space and let $\mathsf{A} \in \mrm{Act}(G,X,\mu)$. Then for any $\sf{A}$-stable measurable $\sigma$-algebra $\eusc{S}$ on $X$ there exists a standard probability space $(Y,\nu)$, an action $\mathsf{B} \in \mrm{Act}(G,Y,\nu)$ and a factor map $\Phi:X \twoheadrightarrow Y$ from $\mathsf{A}$ onto $\mathsf{B}$ such that $\eusc{S} = \Phi_{\uparrow}[\eusc{M}_Y]$.  \end{theorem}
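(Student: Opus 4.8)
The plan is to descend the whole problem to the level of measure algebras, build the factor space there by a point-realization argument, and then lift the group action. First I would pass from $\eusc{S}$ to the Boolean $\sigma$-algebra $\eusc{S}/\mu$ obtained by identifying sets that differ by a $\mu$-null set. Since $(X,\mu)$ is standard, its measure algebra is separable in the metric $d(D,E) = \mu(D \triangle E)$, and hence so is the sub-algebra $\eusc{S}/\mu$. Choosing a countable family $(D_n)_{n \in \mathbb{N}}$ in $\eusc{S}$ that generates $\eusc{S}/\mu$, I would define a measurable map $\Phi_0:X \to 2^{\mathbb{N}}$ by $\Phi_0(x) = (\mathbf{1}_{D_n}(x))_{n \in \mathbb{N}}$. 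Taking $(Y,\nu)$ to be the standard Borel space $2^{\mathbb{N}}$ equipped with the pushforward $\nu = (\Phi_0)_{\#}\mu$, one checks directly that $(\Phi_0)_{\uparrow}[\eusc{M}_Y]$ is precisely the $\sigma$-algebra generated by the $D_n$, which agrees with $\eusc{S}$ modulo null sets.

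Next I would transfer the action. Because $\eusc{S}$ is $\sf{A}$-stable, each $\sf{A}^g$ carries $\eusc{S}$ into itself modulo null sets and therefore induces a measure-preserving automorphism of the measure algebra $\eusc{S}/\mu \cong \eusc{M}_Y/\nu$. The key structural input is the point-realization theorem for standard measure algebras: every measure-preserving automorphism of the measure algebra of a standard probability space is induced by an element of $\mrm{Aut}(Y,\nu)$, uniquely up to almost-everywhere equality. Applying this to each $g \in G$ yields transformations $\sf{B}^g \in \mrm{Aut}(Y,\nu)$.

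Then I would verify that $g \mapsto \sf{B}^g$ is a genuine \pmp action and that $\Phi_0$, after a routine adjustment to a surjection onto a conull standard subset of $Y$, is a factor map from $\sf{A}$ onto $\sf{B}$. The homomorphism identity $\sf{B}^{gh} = \sf{B}^g \circ \sf{B}^h$ holds because both sides induce the same automorphism of $\eusc{M}_Y/\nu$, namely the one coming from $\sf{A}^{gh} = \sf{A}^g \circ \sf{A}^h$ restricted to $\eusc{S}$, and the point realization is unique modulo null sets. The same uniqueness yields the intertwining relation $\Phi_0 \circ \sf{A}^g = \sf{B}^g \circ \Phi_0$ almost everywhere for each $g$, and since $G$ is countable these identities can be collected so as to hold off a single null set. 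As $G$ is discrete there is no measurability-in-$g$ condition to check, and by construction $\eusc{S} = \Phi_{\uparrow}[\eusc{M}_Y]$.

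The main obstacle I anticipate is the passage from measure-algebra automorphisms back to honest point transformations together with the verification that they assemble into a homomorphism rather than merely a family of unrelated transformations. This is exactly where the point-realization theorem and its uniqueness clause do the work: without uniqueness modulo null sets one could not guarantee that $\sf{B}^{gh}$ and $\sf{B}^g \circ \sf{B}^h$ coincide as elements of $\mrm{Aut}(Y,\nu)$, and the countability of $G$ is what allows the countably many almost-everywhere identities to be combined into genuine equalities of actions.
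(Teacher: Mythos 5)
The paper does not prove Theorem \ref{thm.factor}; it quotes it from Theorem 2.15 of \cite{MR1958753}, whose proof is precisely the measure-algebra and point-realization argument you describe. Your sketch is correct and matches that standard route: separability of $\eusc{S}/\mu$, the coding map into $2^{\mathbb{N}}$, and the uniqueness clause of the point-realization theorem (combined with countability of $G$ to assemble the almost-everywhere identities) are exactly the needed ingredients, with the equality $\eusc{S} = \Phi_{\uparrow}[\eusc{M}_Y]$ understood modulo null sets in accordance with the paper's standing conventions.
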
 We will refer to any factor as above a \textbf{realization} of the $\sf{A}$-stable $\sigma$-algebra $\eusc{S}$. Theorem 2.15 in \cite{MR1958753} also asserts that any two realizations of $\eusc{S}$ are isomorphic as factors of $\mathsf{A}$, and so we will typically make no distinction between such realizations. It is straightforward to verify that the $\sf{A}$-invariant $\sigma$-algebra $\eusc{E}_\sf{A}$ is always $\sf{A}$-stable. In keeping with standard terminology, we will refer to the factor of $\sf{A}$ realizing $\eusc{E}_\sf{A}$ as the \textbf{ergodic decomposition of }$\sf{A}$. We will use the notation $(E_\mathsf{A},\eta_\mathsf{A})$ for the associated standard probability space, which we refer to as the \textbf{space of ergodic components of }$\mathsf{A}$.

\subsubsection{Actions of product groups}

For countable groups $G$ and $H$, an element of $\mrm{Act}(G \times H,X,\mu)$ may be identified with a pair of elements $\sf{A} \in \mrm{Act}(G,X,\mu)$ and $\sf{B} \in \mrm{Act}(H,X,\mu)$ which commute in the sense that $\sf{A}^g \sf{B}^h = \sf{B}^h\sf{A}^g$ for all $(g,h) \in G \times H$. As our primary interest in this article will be studying actions of product groups, we will typically use this approach to discussing actions of $G \times H$. Thus a statement such as `let $(\sf{A},\sf{B}) \in \mrm{Act}(G \times H,X,\mu)$' should be understood as an assertion that $\sf{A}$ and $\sf{B}$ are respectively elements of $\mathrm{Act}(G,X,\mu)$ and $\mrm{Act}(H,X,\nu)$, and that these actions commute. We will also simplify notation by writing $\eusc{E}_{\sf{A},\sf{B}}$ instead of $\eusc{E}_{(\sf{A},\sf{B})}$ for the measurable $\sigma$-algebra on $X$ consisting of sets which are invariant with respect to both $\sf{A}$ and $\sf{B}$.\\
\\
Now, consider standard probability spaces $(X,\mu)$ and $(Y,\nu)$ and actions $\sf{A} \in \mrm{Act}(G,X,\mu)$ and $\sf{B} \in \mrm{Act}(H,Y,\nu)$. We define the \textbf{local product of }$\sf{A}$\textbf{ and }$\sf{B}$ to be the element $\sf{A} \sqq \sf{B}$ of $\mrm{Act}(G \times H,X \times Y,\mu \times \nu)$ given by letting $(\mathsf{A}\sqq \mathsf{B})^{g,h}(x,y) = (\mathsf{A}^gx,\mathsf{B}^hy)$ for $(g,h) \in G \times H$ and $(x,y) \in X \times Y$. (Here Remark \ref{rem.overload} is again relevant.) In this case we use the notation $\mathsf{A}_\square$ for the restriction of $\mathsf{A} \sqq \mathsf{B}$ to $G$ and similarly we use the notation ${}_\square\mathsf{B}$ for the restriction of $\mathsf{A} \sqq \mathsf{B}$ to $H$.

\begin{remark}\label{rem.loc} With the notation of the previous paragraph, the action $\mathsf{A}_\square$ can be identified with the diagonal action $\mathsf{A} \parr \sf{I}_G^Y \in \mrm{Act}(G,X \times Y,\mu \times \nu)$ and ${}_\square\mathsf{B}$ can be identified with the diagonal action $\mathsf{I}_H^X \parr \mathsf{B} \in \mrm{Act}(H,X \times Y,\mu \times \nu)$. Thus the local product may be alternatively expressed as: \[ \mathsf{A} \sqq \mathsf{B}= \bigl(\mathsf{A} \parr \sf{I}_G^Y  , \mathsf{I}_H^X\parr \mathsf{B} \bigr) \] \end{remark}

The purpose of this article is to investigate a certain `synergodic decomposition' which in many cases allows for a canonical representation of an action of a product group as a local product. 

\subsection{Formulation and statement of results}

\subsubsection{The synergodic decomposition and local products}

 If $G$ and $H$ are countable groups and $(\sf{A},\sf{B}) \in \mrm{Act}(G \times H,X,\mu)$, it is straightforward to verify that the $\sigma$-algebra $\eusc{E}_\sf{A}$ is $\sf{B}$-stable. Another way of phrasing this statement is to assert that the action $(\sf{A},\sf{B})$ gives rise to a factor of $\mathsf{B}$ acting on the space of $(E_\mathsf{A},\eta_\mathsf{A}) $ of ergodic components of $\mathsf{A}$. We will denote this factor action by $\mathsf{E}_{\mathsf{B} \curvearrowright \mathsf{A}} \in \mrm{Act}(H,E_\mathsf{A},\eta_\mathsf{A})$. The same reasoning shows that $\eusc{E}_{\sf{B}}$ is $\sf{A}$-stable and so we obtain an analogous action $\mathsf{E}_{\mathsf{A} \curvearrowright \mathsf{B}} \in \mrm{Act}(G,E_\mathsf{B},\eta_\mathsf{B})$. Write $\sigma(\eusc{E}_\mathsf{A},\eusc{E}_\mathsf{B})$ for the measureable $\sigma$-algebra on $X$ generated by $\eusc{E}_{\sf{A}}$ and $\eusc{E}_{\sf{B}}$. Since both $\eusc{E}_{\mathsf{A}}$ and $\eusc{E}_\mathsf{B}$ are $(\mathsf{A},\mathsf{B})$-stable we have that $\sigma(\eusc{E}_{\sf{A}},\eusc{E}_{\sf{B}})$ is again $(\sf{A},\sf{B})$-stable, and so the following definition is justified.

\begin{definition} Let $G$ and $H$ be countable discrete groups, let $(X,\mu)$ be a standard probability space and let $(\sf{A},\sf{B}) \in \mrm{Act}(G \times H,X,\mu)$. We define the \textbf{synergodic decomposition of }$(\sf{A},\sf{B})$ to be the factor of $(\mathsf{A},\mathsf{B})$ realizing the stable $\sigma$-algebra $\sigma(\eusc{E}_{\sf{A}},\eusc{E}_{\sf{B}})$. We also define the action $(\sf{A},\sf{B})$ to be \textbf{synergodic} if it is ergodic and the factor map in its synergodic decomposition is an isomorphism, or equivalently if $\eusc{E}_{\sf{A},\sf{B}} = \eusc{N}_X$ and $\sigma(\eusc{E}_{\sf{A}},\eusc{E}_{\sf{B}}) = \eusc{M}_X$. \end{definition}

Our first result is the following.

\begin{theorem} \label{thm.synerg} Let $G$ and $H$ be countable discrete groups, let $(X,\mu)$ be a standard probability space and let $(\sf{A},\sf{B}) \in \mrm{Erg}(G \times H,X,\mu)$. Then the synergodic decomposition of $(\sf{A},\sf{B})$ is isomorphic as a factor of $(\sf{A},\sf{B})$ to the local product: \[(\mathsf{E}_{\mathsf{A} \curvearrowright \mathsf{B}}) \sqq (\mathsf{E}_{\mathsf{B} \curvearrowright \mathsf{A}}) \in \mrm{Act}(G \times H,E_\mathsf{A} \times E_\mathsf{B}, \eta_\mathsf{A} \times \eta_\mathsf{B}) \]  \end{theorem}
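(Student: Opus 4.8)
The plan is to write down an explicit factor map from $(\sf{A},\sf{B})$ onto the local product and then appeal to the uniqueness of realizations that accompanies Theorem \ref{thm.factor}. Let $\pi_\sf{A}:X \twoheadrightarrow E_\sf{A}$ and $\pi_\sf{B}:X \twoheadrightarrow E_\sf{B}$ be the factor maps realizing $\eusc{E}_\sf{A}$ and $\eusc{E}_\sf{B}$, and consider the combined map $\Phi = (\pi_\sf{B},\pi_\sf{A}):X \to E_\sf{B} \times E_\sf{A}$. Since $\eusc{E}_\sf{A}$ consists of $\sf{A}$-invariant sets, the action $\sf{A}$ is trivial on the factor $E_\sf{A}$ and the $H$-part of the factor action is exactly $\mathsf{E}_{\sf{B}\curvearrowright\sf{A}}$; symmetrically $\sf{B}$ is trivial on $E_\sf{B}$ and the $G$-part is $\mathsf{E}_{\sf{A}\curvearrowright\sf{B}}$. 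Using the relations $\pi_\sf{A}\circ\sf{A}^g = \pi_\sf{A}$ and $\pi_\sf{B}\circ\sf{B}^h = \pi_\sf{B}$ together with the commutation of $\sf{A}$ and $\sf{B}$, a direct computation shows that $\Phi$ intertwines $(\sf{A},\sf{B})$ with $(\mathsf{E}_{\sf{A}\curvearrowright\sf{B}})\sqq(\mathsf{E}_{\sf{B}\curvearrowright\sf{A}})$. Because $\eusc{M}_{E_\sf{B}\times E_\sf{A}}$ is generated by its two coordinate projections, pulling back along $\Phi$ gives $\Phi_{\uparrow}[\eusc{M}_{E_\sf{B}\times E_\sf{A}}] = \sigma(\eusc{E}_\sf{A},\eusc{E}_\sf{B})$. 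All of these identities are routine and hold up to $\mu$-null sets.

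The one substantive point, and the step I expect to be the main obstacle, is to show that the pushforward $\Phi_{\#}\mu$ equals the product $\eta_\sf{B}\times\eta_\sf{A}$; equivalently, that the $\sigma$-algebras $\eusc{E}_\sf{A}$ and $\eusc{E}_\sf{B}$ are $\mu$-independent. This is where ergodicity of $(\sf{A},\sf{B})$ enters, and I would argue in $L^2(X,\mu)$. Let $P_\sf{A}$ denote the orthogonal projection onto the subspace of $\sf{A}$-invariant functions; this coincides with the conditional expectation $\E[\,\cdot\mid\eusc{E}_\sf{A}]$, since the $\sf{A}$-invariant $L^2$-functions are precisely the $\eusc{E}_\sf{A}$-measurable ones. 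Because $\sf{A}$ and $\sf{B}$ commute, each Koopman operator of $\sf{B}$ preserves the space of $\sf{A}$-invariant functions and hence commutes with $P_\sf{A}$. Consequently, if $g\in L^2$ is $\sf{B}$-invariant then $P_\sf{A}g$ is simultaneously $\sf{A}$-invariant and $\sf{B}$-invariant, i.e.\ $\eusc{E}_{\sf{A},\sf{B}}$-measurable; ergodicity forces $\eusc{E}_{\sf{A},\sf{B}} = \eusc{N}_X$, so $P_\sf{A}g$ is the constant $\int g\,\dee\mu$. Applying this to indicators $g=\mathbf{1}_{D'}$ with $D'\in\eusc{E}_\sf{B}$ yields $\mu(D\cap D') = \mu(D)\mu(D')$ for every $D\in\eusc{E}_\sf{A}$, which is exactly the desired independence and hence the product-measure identity $\Phi_{\#}\mu = \eta_\sf{B}\times\eta_\sf{A}$.

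Putting these together, $\Phi$ is a factor map from $(\sf{A},\sf{B})$ onto $(\mathsf{E}_{\sf{A}\curvearrowright\sf{B}})\sqq(\mathsf{E}_{\sf{B}\curvearrowright\sf{A}})$ that realizes the stable $\sigma$-algebra $\sigma(\eusc{E}_\sf{A},\eusc{E}_\sf{B})$. Since the synergodic decomposition is by definition the factor realizing this same $\sigma$-algebra, the uniqueness of realizations up to isomorphism of factors (the companion statement to Theorem \ref{thm.factor}) identifies the synergodic decomposition with the local product, as claimed. The only cosmetic matter is the ordering of coordinates: the displayed local product is written on $E_\sf{A}\times E_\sf{B}$, which agrees with the $E_\sf{B}\times E_\sf{A}$ above under the evident transposition isomorphism, so nothing is lost.
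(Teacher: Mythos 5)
Your proposal is correct, and its overall architecture matches the paper's: both reduce the theorem to the single substantive claim that $\eusc{E}_\sf{A}$ and $\eusc{E}_\sf{B}$ are statistically independent (the paper isolates this as Lemma \ref{lem.semi}, and carries out the reduction via the joining-by-common-extension construction from \cite{MR1958753}, which is essentially your explicit map $\Phi = (\pi_\sf{B},\pi_\sf{A})$ together with the uniqueness of realizations). Where you genuinely diverge is in the proof of independence itself. The paper's argument is quantitative: it fixes absolutely generating probability measures $\mathfrak{w}$ on $G$ and $\mathfrak{u}$ on $H$, invokes the weighted mean ergodic theorem of \cite{MR2923460} for the product measure $\mathfrak{w} \times \mathfrak{u}$ on $G \times H$ (via Proposition \ref{prop.ind-conv} and Corollary \ref{cor.oscc}), factors the averaging operator on $\mathbf{1}_{C \cap D}$ into a product $[\mathsf{A}(\mathfrak{w}^{\ast n})\mathbf{1}_D]\cdot[\mathsf{B}(\mathfrak{u}^{\ast n})\mathbf{1}_C]$ using the invariances of $C$ and $D$, and then passes to the limit with Cauchy--Schwarz estimates. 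Your argument is soft: the Koopman operators of $\sf{B}$ commute with the orthogonal projection $P_\sf{A}$ onto the $\sf{A}$-invariant subspace of $L^2$, so $P_\sf{A}$ maps $\sf{B}$-invariant functions to $(\sf{A},\sf{B})$-invariant ones, which are constants by ergodicity; self-adjointness of $P_\sf{A}$ then gives $\mu(C\cap D)=\langle \mathbf{1}_C, P_\sf{A}\mathbf{1}_D\rangle = \mu(C)\mu(D)$ directly. Your route is shorter and avoids the external weighted ergodic theorem entirely; the paper's heavier setup is not wasted, however, since the same convolution machinery is reused in the proofs of Proposition \ref{prop.onside} and Theorem \ref{thm.crosserg}. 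The only point worth making explicit in your write-up is the standard identification of a.e.-$\sf{A}$-invariant $L^2$ functions with $\eusc{E}_\sf{A}$-measurable ones, so that $P_\sf{A}$ really is the conditional expectation onto $\eusc{E}_\sf{A}$; this is routine for actions of countable groups.
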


Having found that synergodic decompositions are local products, we also obtain the following assurance that the synergodic decomposition will recover a local product of ergodic actions.

\begin{theorem} \label{thm.locprod} Let $G$ and $H$ be countable groups, let $(X,\mu)$ and $(Y,\nu)$ be standard probability spaces and let $\sf{A} \in \mrm{Erg}(G,X,\mu)$ and $\sf{B} \in \mrm{Erg}(H,Y,\nu)$ be ergodic actions. Then the local product $\sf{A} \sqq \sf{B} \in \mrm{Act}(G \times H,X \times Y,\mu \times \nu)$ is synergodic. Moreover, in this local product we have $\eusc{E}_{\mathsf{A}_\square} = \eusc{N}_X \times \eusc{M}_Y$ and $\eusc{E}_{{}_\square\mathsf{B}} = \eusc{M}_X \times \eusc{N}_Y$.  \end{theorem}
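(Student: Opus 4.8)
The plan is to first establish the two displayed identities for the invariant $\sigma$-algebras $\eusc{E}_{\mathsf{A}_\square}$ and $\eusc{E}_{{}_\square\mathsf{B}}$, since synergodicity will then follow almost formally. By Remark \ref{rem.loc} I may work with $\mathsf{A}_\square = \mathsf{A} \parr \sf{I}_G^Y$ acting on $(X \times Y, \mu \times \nu)$ by $\mathsf{A}_\square^g(x,y) = (\mathsf{A}^g x, y)$. The inclusion $\eusc{N}_X \times \eusc{M}_Y \subseteq \eusc{E}_{\mathsf{A}_\square}$ is immediate: any set of the form $X \times B$ with $B \in \eusc{M}_Y$ satisfies $\mathsf{A}_\square^g(X \times B) = \mathsf{A}^g X \times B = X \times B$, and every element of $\eusc{N}_X \times \eusc{M}_Y$ agrees modulo $\mu \times \nu$-null sets with such a set. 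The content is therefore the reverse inclusion.

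For the reverse inclusion I would take $D \in \eusc{E}_{\mathsf{A}_\square}$ and analyze its vertical slices $D_y = \{x \in X : (x,y) \in D\}$. For each fixed $g \in G$ the slice of $\mathsf{A}_\square^g D$ over $y$ is exactly $\mathsf{A}^g D_y$, so Fubini's theorem turns the invariance condition $(\mu \times \nu)(\mathsf{A}_\square^g D \,\triangle\, D) = 0$ into $\mu(\mathsf{A}^g D_y \,\triangle\, D_y) = 0$ for $\nu$-almost every $y$. Because $G$ is countable, intersecting these conull sets over all $g \in G$ yields a single conull set of $y$ for which $D_y$ is $\mathsf{A}$-invariant, and ergodicity of $\mathsf{A}$ forces $\mu(D_y) \in \{0,1\}$ for $\nu$-almost every such $y$. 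Setting $B = \{y : \mu(D_y) = 1\} \in \eusc{M}_Y$ and applying Fubini once more shows $(\mu \times \nu)(D \,\triangle\, (X \times B)) = 0$, so $D \in \eusc{N}_X \times \eusc{M}_Y$. The identity $\eusc{E}_{{}_\square\mathsf{B}} = \eusc{M}_X \times \eusc{N}_Y$ follows by the same argument with the roles of the two coordinates exchanged, using ergodicity of $\mathsf{B}$.

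With these identities in hand, synergodicity reduces to two checks. First, $\sigma(\eusc{E}_{\mathsf{A}_\square}, \eusc{E}_{{}_\square\mathsf{B}}) = \eusc{M}_{X \times Y}$, because this $\sigma$-algebra contains every rectangle $A \times B = (A \times Y) \cap (X \times B)$ with $A \in \eusc{M}_X$ and $B \in \eusc{M}_Y$, and such rectangles generate the product $\sigma$-algebra of the standard space $X \times Y$. Second, I would verify ergodicity of the local product by computing $\eusc{E}_{\mathsf{A}_\square, {}_\square\mathsf{B}} = \eusc{E}_{\mathsf{A}_\square} \cap \eusc{E}_{{}_\square\mathsf{B}}$: any $D$ in this intersection equals $X \times B$ modulo null sets for some $B \in \eusc{M}_Y$ by the first identity, while its membership in $\eusc{E}_{{}_\square\mathsf{B}}$ forces $\mathsf{B}^h B = B$ for all $h$, so that $B$ is $\mathsf{B}$-invariant, whence $B \in \eusc{N}_Y$ by ergodicity of $\mathsf{B}$ and $D \in \eusc{N}_{X \times Y}$. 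Both conditions of the definition of synergodicity are thus satisfied. The one step demanding genuine care is the slice argument of the second paragraph, where one must address the measurability of $y \mapsto \mu(D_y)$ and correctly manage the two layers of almost-everywhere quantification, over $y$ and, via countability, over $g$; the remaining steps are formal consequences of ergodicity and the structure of product $\sigma$-algebras.
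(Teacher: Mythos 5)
Your proposal is correct and follows essentially the same route as the paper: reduce the theorem to the two $\sigma$-algebra identities, prove the nontrivial inclusion by slicing an invariant set $D$ over $y$ and invoking ergodicity of $\mathsf{A}$ to force $\mu(D_y) \in \{0,1\}$, then obtain ergodicity and generation of $\eusc{M}_{X \times Y}$ as formal consequences. Your handling of the almost-everywhere quantification over $g$ and $y$ via Fubini and countability of $G$ is in fact slightly more careful than the paper's pointwise phrasing of the same slice argument.
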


It follows directly from the last statement in Theorem \ref{thm.locprod} that $\mathsf{A}$ and $\mathsf{E}_{\mathsf{A} \curvearrowright \mathsf{B}}$ are isomorphic as actions of $G$, and $\mathsf{B}$ and $\mathsf{E}_{\mathsf{B} \curvearrowright \mathsf{A}}$ are isomorphic as actions of  $H$. We may combine Theorems \ref{thm.synerg} and \ref{thm.locprod} into the following slightly less formal statement.

\begin{corollary} \label{cor.informal} Let $G$ and $H$ be countable discrete groups, let $(X,\mu)$ be a standard probability space and let $(\sf{A},\sf{B}) \in \mrm{Erg}(G \times H,X,\mu)$. Then $(\mathsf{A},\mathsf{B})$ is synergodic if and only if it is isomorphic to the local product $\mathsf{A} \sqq \mathsf{B}$. Moreover, when these equivalent conditions are satisfied the local product structure of $(\mathsf{A},\mathsf{B})$ may be constructed `internally' as $(\mathsf{E}_{\mathsf{A} \curvearrowright \mathsf{B}}) \sqq (\mathsf{E}_{\mathsf{B} \curvearrowright \mathsf{A}})$. \end{corollary}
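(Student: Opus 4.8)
The plan is to prove the two implications separately, drawing the forward direction from Theorem~\ref{thm.synerg} and the reverse from Theorem~\ref{thm.locprod}, and to read the local product $\mathsf{A}\sqq\mathsf{B}$ appearing in the statement as a local product of \emph{ergodic} actions of $G$ and $H$; the canonical such representative is the internal product $(\mathsf{E}_{\mathsf{A}\curvearrowright\mathsf{B}})\sqq(\mathsf{E}_{\mathsf{B}\curvearrowright\mathsf{A}})$ furnished by Theorem~\ref{thm.synerg}. Throughout I would exploit the fact that synergodicity is phrased entirely in terms of the $\sigma$-algebras $\eusc{E}_{\mathsf{A},\mathsf{B}}$ and $\sigma(\eusc{E}_\mathsf{A},\eusc{E}_\mathsf{B})$, so that it is a plausible isomorphism invariant.

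For the forward implication I would assume $(\mathsf{A},\mathsf{B})$ is synergodic. By definition this means exactly that the factor map exhibiting the synergodic decomposition is an isomorphism, so $(\mathsf{A},\mathsf{B})$ is isomorphic, as an action of $G\times H$, to its synergodic decomposition. Theorem~\ref{thm.synerg} identifies that decomposition with $(\mathsf{E}_{\mathsf{A}\curvearrowright\mathsf{B}})\sqq(\mathsf{E}_{\mathsf{B}\curvearrowright\mathsf{A}})$, which is patently a local product of the actions $\mathsf{E}_{\mathsf{A}\curvearrowright\mathsf{B}}\in\mrm{Act}(G,E_\mathsf{B},\eta_\mathsf{B})$ and $\mathsf{E}_{\mathsf{B}\curvearrowright\mathsf{A}}\in\mrm{Act}(H,E_\mathsf{A},\eta_\mathsf{A})$. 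En route I would record that these factor actions are ergodic: a nontrivial $\mathsf{E}_{\mathsf{A}\curvearrowright\mathsf{B}}$-invariant set would pull back to a member of $\eusc{E}_\mathsf{B}$ that is also $\mathsf{A}$-invariant, hence to a nontrivial element of $\eusc{E}_{\mathsf{A},\mathsf{B}}=\eusc{N}_X$, a contradiction, and symmetrically for $\mathsf{E}_{\mathsf{B}\curvearrowright\mathsf{A}}$. This exhibits $(\mathsf{A},\mathsf{B})$ as isomorphic to a local product of ergodic actions and simultaneously proves the ``moreover'' clause verbatim.

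For the reverse implication I would assume $(\mathsf{A},\mathsf{B})$ is isomorphic to a local product $\mathsf{C}\sqq\mathsf{D}$ with $\mathsf{C}\in\mrm{Erg}(G,X',\mu')$ and $\mathsf{D}\in\mrm{Erg}(H,Y',\nu')$. Theorem~\ref{thm.locprod} immediately gives that $\mathsf{C}\sqq\mathsf{D}$ is synergodic, so it remains only to transport synergodicity across the isomorphism, and this transport is the step I expect to be the main obstacle. I would fix an isomorphism $\Psi$ from $(\mathsf{A},\mathsf{B})$ to $\mathsf{C}\sqq\mathsf{D}$ and verify that the pullback operation $\Psi_\uparrow$ respects all the relevant structure. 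Since $\Psi$ conjugates $\mathsf{A}$ to $\mathsf{C}_\square$ and $\mathsf{B}$ to ${}_\square\mathsf{D}$, the map $\Psi_\uparrow$ carries $\eusc{E}_{\mathsf{C}_\square}$ to $\eusc{E}_\mathsf{A}$ and $\eusc{E}_{{}_\square\mathsf{D}}$ to $\eusc{E}_\mathsf{B}$; and because $\Psi$ is a bijection, $\Psi_\uparrow$ is an isomorphism of $\sigma$-algebras that preserves joins and the trivial and full $\sigma$-algebras, so it sends $\sigma(\eusc{E}_{\mathsf{C}_\square},\eusc{E}_{{}_\square\mathsf{D}})$ to $\sigma(\eusc{E}_\mathsf{A},\eusc{E}_\mathsf{B})$ and $\eusc{E}_{\mathsf{C}_\square,{}_\square\mathsf{D}}$ to $\eusc{E}_{\mathsf{A},\mathsf{B}}$. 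Transporting the two synergodicity conditions for $\mathsf{C}\sqq\mathsf{D}$ therefore yields $\eusc{E}_{\mathsf{A},\mathsf{B}}=\eusc{N}_X$ and $\sigma(\eusc{E}_\mathsf{A},\eusc{E}_\mathsf{B})=\eusc{M}_X$, completing the reverse implication.

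Combining the two implications yields the stated equivalence, and the identification obtained in the forward direction is precisely the ``moreover'' clause. Apart from the routine but slightly fiddly verification that $\Psi_\uparrow$ intertwines invariance, generation and the extreme $\sigma$-algebras, the entire argument is a direct appeal to Theorems~\ref{thm.synerg} and~\ref{thm.locprod}.
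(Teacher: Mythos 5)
Your proposal is correct and follows the same route the paper intends: the corollary is stated there as a direct combination of Theorems \ref{thm.synerg} and \ref{thm.locprod}, with the forward direction coming from the identification of the synergodic decomposition as $(\mathsf{E}_{\mathsf{A}\curvearrowright\mathsf{B}})\sqq(\mathsf{E}_{\mathsf{B}\curvearrowright\mathsf{A}})$ and the reverse from the synergodicity of local products of ergodic actions. Your added checks --- that the component actions $\mathsf{E}_{\mathsf{A}\curvearrowright\mathsf{B}}$ and $\mathsf{E}_{\mathsf{B}\curvearrowright\mathsf{A}}$ are ergodic, and that synergodicity transports across an isomorphism via $\Psi_\uparrow$ --- are exactly the details the paper leaves implicit, and your reading of the informal ``$\mathsf{A}\sqq\mathsf{B}$'' as the internal local product of ergodic component actions is the intended one.
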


Corollary \ref{cor.informal} guarantees that we can construct examples of synergodic actions from ergodic actions of the factor groups, and that this is essentially the only way to construct synergodic actions. Thus we are lead to ask for examples of ergodic actions of product groups which fail to be synergodic. Such nonexamples indeed exist for the Cartesian square of any infinite group, as we now explain. \\
\\
Let $G$ be a countable group and let $(X,\mu)$ be any standard probability space. We consider the space $(X^G,\mu^G)$ of $X$-valued sequences on $G$ with the product measure $\mu^G$ on the $\sigma$-algebra generated by cylinder sets. This space admits two commuting \pmp actions of $G$, the \textbf{left and right Bernoulli shifts of }$G$\textbf{ over }$(X,\mu)$ defined respectively by \[ (\mathsf{L}^h \xi)(g) = \xi(h^{-1}g) \qqquad \qquad \qqquad (\mathsf{R}^h \xi)(g) = \xi(gh) \] for $\xi \in X^G$ and $g,h \in G$. These two actions are isomorphic by the involution $\mathsf{J} \in \mrm{Aut}(X^G,\mu^G)$ given by $(\mathsf{J}\xi)(g) = \mathsf{J}(g^{-1})$. More explicitly, we have $\mathsf{J} = \mathsf{J}^{-1}$ and for all $g \in G$ the Bernoulli shifts satisfy $\mathsf{L}^g =  \mathsf{J} \mathsf{R}^g \mathsf{J}$. It is standard that both of the one-sided Bernoulli shifts are ergodic elements of $\mrm{Act}(G,X^G,\mu^G)$ as long as $G$ is infinite.\\
\\
We define the \textbf{two-sided Bernoulli shift over }$(X,\mu)$ to be the action $(\mathsf{L},\mathsf{R}) \in \mrm{Act}(G \times G, X^G,\mu^G)$. Since both $\sf{L}$ and $\sf{R}$ are ergodic, we have $\eusc{E}_\sf{L} = \eusc{E}_\sf{R} = \eusc{N}_{X^G}$ and therefore $\sigma(\eusc{E}_\sf{L},\eusc{E}_\sf{R}) = \eusc{N}_{X^G}$. It follows that the two-sided Bernoulli shift is not synergodic as an action of $G \times G$, as long as the probability space $(X,\mu)$ is nontrivial. Indeed, this line of reasoning shows that a nontrivial action of a product group where the individual factors act ergodically can never be synergodic.
 
\subsubsection{Eccentric ergodic averages}

Let $p \in [1,\infty]$, let $(X,\mu)$ be a standard probability space and let $\sf{A} \in \mrm{Act}(G,X,\mu)$. For a probability measure $\mathfrak{w}$ on $G$ it will be convenient to use the notation $\sf{A}(\mathfrak{w})$ for the operator defined on $\psi \in L^p(X,\mu)$ by: \begin{equation} \label{eq.defop} [\sf{A}(\mathfrak{w})\psi](x) = \sum_{g \in G} \mathfrak{w}(g) \psi(\mathsf{A}^g x)  \end{equation} We note that $\sf{A}(\mathfrak{w})$ contracts the $p$-norm for all $p \in [1,\infty]$. We also let $\mathbb{I}_\sf{A}$ denote the conditional expectation operator onto the $\sf{A}$-invariant $\sigma$-algebra $\eusc{E}_\sf{A}$, so $\mathbb{I}_\sf{A}$ again contracts the $p$-norm for all $p \in [1,\infty]$. 

\begin{definition} Let $G$ be a countable group and let $(\mathfrak{w}_n)_{n \in \mathbb{N}}$ be a sequence of probability measures on $G$. For $p \in [1,\infty]$, we define $(\mathfrak{w}_n)_{n \in \mathbb{N}}$ to be $p$\textbf{-mean ergodic} if for all standard probability spaces $(X,\mu)$, all \pmp actions $\sf{A} \in \mrm{Act}(G,X,\mu)$ and all $\psi \in L^p(X,\mu)$ we have: \[ \lim_{n \to \infty} ||(\mathsf{A}(\mathfrak{w}_n) - \mathbb{I}_{\sf{A}})\psi ||_p = 0 \]  \end{definition} By analyzing limits of ergodic averages over the individual factor groups in terms of the synergodic decomposition, we are able to show the following.

\begin{theorem} \label{thm.crosserg} For any pair of countable groups $G$ and $H$ and any $p \in [1,\infty]$ the following are equivalent. \begin{description} \item[(a)] Both of the sequences $(\mathfrak{w}_n)_{n \in \mathbb{N}}$ and $(\mathfrak{v}_n)_{n \in \mathbb{N}}$ are $p$-mean ergodic. \item[(b)] The sequence $(\mathfrak{w}_n \times \mathfrak{v}_n)_{n \in \mathbb{N}}$ is $p$-mean ergodic.  \end{description} \end{theorem}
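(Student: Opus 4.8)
\section*{Proof proposal for Theorem \ref{thm.crosserg}}

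The plan is to reduce both implications to the behaviour of the operators $\sf{A}(\mathfrak{w}_n)$ and $\sf{B}(\mathfrak{v}_n)$ on $L^p(X,\mu)$ for a single action $(\sf{A},\sf{B}) \in \mrm{Act}(G \times H,X,\mu)$. The starting point is that, because $\sf{A}$ and $\sf{B}$ commute, the operator attached to the product measure factors as a composition: a direct computation from \eqref{eq.defop} using $\sf{A}^g\sf{B}^h = \sf{B}^h\sf{A}^g$ gives $(\sf{A},\sf{B})(\mathfrak{w}_n \times \mathfrak{v}_n)\psi = \sf{A}(\mathfrak{w}_n)\sf{B}(\mathfrak{v}_n)\psi = \sf{B}(\mathfrak{v}_n)\sf{A}(\mathfrak{w}_n)\psi$ for every $\psi \in L^p(X,\mu)$. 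The one structural input beyond this factorization I would isolate as a preliminary lemma, namely that $\mathbb{I}_\sf{A}\mathbb{I}_\sf{B} = \mathbb{I}_\sf{B}\mathbb{I}_\sf{A} = \mathbb{I}_{\sf{A},\sf{B}}$. In $L^2$ this holds because $\mathbb{I}_\sf{A}$ and $\mathbb{I}_\sf{B}$ are the orthogonal projections onto the $\sf{A}$- and $\sf{B}$-fixed subspaces; each $\sf{B}^h$ commutes with every $\sf{A}^g$ and hence with the projection onto $\sf{A}$-fixed vectors, so the two projections commute, and the product of two commuting orthogonal projections is the projection onto the intersection of their ranges, which is precisely $L^2(\eusc{E}_{\sf{A},\sf{B}})$. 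Since on a probability space $L^\infty \subseteq L^2 \subseteq L^1$ and all three conditional expectations contract every $p$-norm, this identity propagates from $L^2$ to every $L^p$ with $p \in [1,\infty]$ by density and continuity.

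For the implication (a)$\Rightarrow$(b) I would fix $(\sf{A},\sf{B})$ and $\psi$ and bound $\|(\sf{A},\sf{B})(\mathfrak{w}_n \times \mathfrak{v}_n)\psi - \mathbb{I}_{\sf{A},\sf{B}}\psi\|_p$ by inserting the intermediate term $\sf{A}(\mathfrak{w}_n)\mathbb{I}_\sf{B}\psi$ and applying the triangle inequality. The first resulting term equals $\|\sf{A}(\mathfrak{w}_n)\bigl(\sf{B}(\mathfrak{v}_n)\psi - \mathbb{I}_\sf{B}\psi\bigr)\|_p$, which is at most $\|\sf{B}(\mathfrak{v}_n)\psi - \mathbb{I}_\sf{B}\psi\|_p$ because $\sf{A}(\mathfrak{w}_n)$ contracts the $p$-norm, and this tends to $0$ by $p$-mean ergodicity of $(\mathfrak{v}_n)$ applied to the $H$-action $\sf{B}$. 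The second term is $\|\sf{A}(\mathfrak{w}_n)\mathbb{I}_\sf{B}\psi - \mathbb{I}_{\sf{A},\sf{B}}\psi\|_p$; applying $p$-mean ergodicity of $(\mathfrak{w}_n)$ to the $G$-action $\sf{A}$ with input function $\mathbb{I}_\sf{B}\psi \in L^p(X,\mu)$ yields $\sf{A}(\mathfrak{w}_n)\mathbb{I}_\sf{B}\psi \to \mathbb{I}_\sf{A}\mathbb{I}_\sf{B}\psi$, and by the lemma this limit is exactly $\mathbb{I}_{\sf{A},\sf{B}}\psi$. Both terms vanishing gives (b).

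The implication (b)$\Rightarrow$(a) is the easy direction, obtained by feeding trivial actions into (b). To show $(\mathfrak{w}_n)$ is $p$-mean ergodic, take any $\sf{A} \in \mrm{Act}(G,X,\mu)$ and form the commuting pair $(\sf{A},\sf{I}_H^X) \in \mrm{Act}(G \times H,X,\mu)$ with $H$ acting trivially. Then $\sf{I}_H^X(\mathfrak{v}_n)$ is the identity operator, so $(\sf{A},\sf{I}_H^X)(\mathfrak{w}_n \times \mathfrak{v}_n) = \sf{A}(\mathfrak{w}_n)$, while $\eusc{E}_{\sf{A},\sf{I}_H^X} = \eusc{E}_\sf{A} \cap \eusc{M}_X = \eusc{E}_\sf{A}$ gives $\mathbb{I}_{\sf{A},\sf{I}_H^X} = \mathbb{I}_\sf{A}$. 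Hypothesis (b) then reads $\sf{A}(\mathfrak{w}_n)\psi \to \mathbb{I}_\sf{A}\psi$ for all $\psi$, which is precisely $p$-mean ergodicity of $(\mathfrak{w}_n)$; the symmetric choice $(\sf{I}_G^Y,\sf{B})$ handles $(\mathfrak{v}_n)$.

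The main obstacle is the second term in the (a)$\Rightarrow$(b) estimate: a naive application of ergodicity of $(\mathfrak{w}_n)$ identifies the limit as $\mathbb{I}_\sf{A}\mathbb{I}_\sf{B}\psi$, and one must know that this coincides with $\mathbb{I}_{\sf{A},\sf{B}}\psi$ rather than some strictly coarser average. For an arbitrary pair of sub-$\sigma$-algebras the composition of two conditional expectations need not be the conditional expectation onto their intersection, so the genuine content of the argument is the commutativity of $\mathbb{I}_\sf{A}$ and $\mathbb{I}_\sf{B}$ forced by the commuting action structure; equivalently, this step can be phrased through the factor action $\mathsf{E}_{\sf{A} \curvearrowright \sf{B}}$ on $(E_\sf{B},\eta_\sf{B})$, whose invariant $\sigma$-algebra pulls back to $\eusc{E}_{\sf{A},\sf{B}}$. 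The remaining care is bookkeeping to ensure that the contraction and conditional-expectation statements hold uniformly across all $p \in [1,\infty]$, including the endpoint $p = \infty$.
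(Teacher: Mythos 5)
Your proposal is correct, and its overall skeleton (the triangle-inequality estimate through the intermediate term, the contraction of $\mathsf{A}(\mathfrak{w}_n)$ and $\mathsf{B}(\mathfrak{v}_n)$ in every $p$-norm, and the easy direction via trivial actions of one factor) matches the paper's. Where you genuinely diverge is in how you establish the key identity that the iterated limit is $\mathbb{I}_{\mathsf{A},\mathsf{B}}\psi$ rather than some coarser average. The paper isolates this as Lemma \ref{lem.prodd} ($\mathbb{I}_\mathsf{B}\mathbb{I}_\mathsf{A} = \mathbb{E}_X$ for ergodic $(\mathsf{A},\mathsf{B})$) and proves it by heavy structural machinery: Theorem \ref{thm.synerg} realizes the synergodic decomposition as a local product, the Rokhlin skew-product theorem (Theorem \ref{thm.skew}) presents $X$ as $E_1 \times E_2 \times Y$ with $\eusc{E}_1 = \eusc{M}_1 \times \eusc{N}_2 \times \eusc{N}_Y$ and $\eusc{E}_2 = \eusc{N}_1 \times \eusc{M}_2 \times \eusc{N}_Y$, and an explicit disintegration computation (Proposition \ref{prop.stat}) finishes; the non-ergodic case is then dispatched by a brief appeal to the ergodic decomposition. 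You instead prove the stronger statement $\mathbb{I}_\mathsf{A}\mathbb{I}_\mathsf{B} = \mathbb{I}_\mathsf{B}\mathbb{I}_\mathsf{A} = \mathbb{I}_{\mathsf{A},\mathsf{B}}$ for an \emph{arbitrary} commuting pair by a soft Hilbert-space argument: each $\mathbf{k}_{\mathsf{B}^h}$ commutes with the Koopman operators of $\mathsf{A}$, hence with the orthogonal projection onto the $\mathsf{A}$-fixed subspace; the two projections therefore commute (since $\mathbb{I}_\mathsf{A}$, being self-adjoint, preserves both the $\mathsf{B}$-fixed space and its orthocomplement), and the product of commuting orthogonal projections is the projection onto the intersection of their ranges, which is the space of jointly fixed vectors, i.e.\ $L^2(\eusc{E}_{\mathsf{A},\mathsf{B}})$; the identity then passes to all $p \in [1,\infty]$ by the inclusions $L^\infty \subseteq L^2 \subseteq L^1$, density, and $p$-norm contractivity. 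Your route is more elementary and self-contained -- it bypasses the synergodic decomposition, the Rokhlin theorem, and the ergodic-decomposition reduction entirely, and it correctly identifies (and resolves) the one genuine pitfall, namely that compositions of conditional expectations onto arbitrary sub-$\sigma$-algebras do not in general give the conditional expectation onto the intersection. What the paper's longer route buys is thematic coherence: it exhibits Theorem \ref{thm.crosserg} as a consequence of the synergodic decomposition that the article is built around, whereas your argument shows the ergodic-averaging statement does not actually need that structure.
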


Recall that a sequence $(F_n)_{n \in \mathbb{N}}$ of nonempty subsets of a countable group $G$ is defined to be a \textbf{F\"{o}lner sequence} we have \[ \lim_{n \to \infty} \frac{|gF_n \triangle F_n|}{|F_n|} = 0 \] for all $g \in G$. The group $G$ is defined to be \textbf{amenable} if there exists a F\"{o}lner sequence in $G$. In the classical case where $G$ and $H$ are amenable groups and $(\mathfrak{w}_n)_{n \in \mathbb{N}}$ and $(\mathfrak{u}_n)_{n \in \mathbb{N}}$ are uniform averages on F\"{o}lner sequences, Theorem \ref{thm.crosserg} follows from the fact that $(\mathfrak{w}_n \times \mathfrak{u}_n)_{n \in \mathbb{N}}$ is again a sequences of uniform averages on F\"{o}lner sequence in $G \times H$. (See \cite{MR1865397} for the ergodic theorem on amenable groups.)  \\
\\
In the case of nonamenable groups, ergodic theorems typically rely on the averages in question either having some amount of geometric regularity or on being powers of a single Markov operator. We direct the reader to the references \cite{MR2573139}, \cite{MR2186253} or to the French-language text \cite{MR2643350} for surveys of this extensive topic. Theorem \ref{thm.crosserg} provides apparently new instances of ergodic theorems on products on nonamenable groups where the averaging sets are highly `eccentric' between the two factors. \\
\\
For example, let $d \in \mathbb{N}$ and consider the rank-$d$ free group $\mathbb{F}_d$. Theorem 1.1 in \cite{MR3113602} and Theorem 1 in \cite{MR1266737}  describe a variety of examples of $p$-mean ergodic sequences $(\mathfrak{w}_n)_{n \in \mathbb{N}}$ on $\mathbb{F}_d$. From Theorem \ref{thm.crosserg} we find that for \textit{any} function $\tau:\mathbb{N} \to \mathbb{N}$ with $\lim_{n \to \infty} \tau(n) = \infty$ the sequence $(\mathfrak{w}_n \times \mathfrak{w}_{\tau(n)})_{n \in \mathbb{N}}$ is $p$-mean ergodic on $\mathbb{F}_d \times \mathbb{F}_d$. In particular, if $\tau$ has superexponential growth in $n$ these averages cannot be powers of a single operator, and do not have the `balanced' form considered in Theorem 3.14 of \cite{MR2573139}. It is possible that this approach could simplify some of the arguments in \cite{MR2573139}, where great care is required to deal with averages on products of Lie groups.

\subsubsection{Rich synergodicity}

Given that the property of synergodicity is nontrivial, in the sense of having both examples and nonexamples, we may ask about the extent to which arbitrary ergodic actions of a product group may be approximated by synergodic actions. Thus the following definition is natural.

\begin{definition} We define an infinite countable group $G$ to be \textbf{richly synergodic} if the synergodic actions are dense in $\mrm{Erg}(G \times G,X,\mu)$ for every standard probability space $(X,\mu)$. \end{definition}

With the notation of the above definition, it is straightforward to verify that the group $G$ is richly synergodic if and only if the synergodic actions are dense in $\mrm{Act}(G \times G,X,\mu)$ for some diffuse standard probability space. We obtain the following result, which shows that rich synergodicity is a nontrivial property of discrete groups.

\begin{theorem} \label{thm.rich} \label{thm.amenable} \begin{description} \item[(a)] Every countably infinite amenable group is richly synergodic. \item[(b)] The rank-$d$ free group $\mathbb{F}_d$ not richly synergodic when $d \geq 2$. \end{description} \end{theorem}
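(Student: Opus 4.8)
Both parts rest on the characterization from Corollary~\ref{cor.informal}: up to isomorphism, the synergodic actions of $G \times G$ are exactly the local products $\sf{C} \sqq \sf{D}$ of ergodic $G$-actions. So in both cases the issue is whether arbitrary ergodic actions of $G \times G$ lie in the weak closure of such local products. The unifying idea is to control this through the two commuting Koopman representations of $G$ on $L^2(X,\mu)$: a local product is precisely the case where these representations are realized on a tensor-split Hilbert space, and this splitting is detectable by operator-norm ("correlation") functionals that are continuous in the weak topology. By the remark following the definition of rich synergodicity it suffices to work throughout with a single diffuse standard space $(X,\mu)$.

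For part (a) I would show that \emph{every} ergodic $(\sf{A},\sf{B}) \in \mrm{Erg}(G \times G,X,\mu)$ is weakly contained in one fixed synergodic action, namely $\beta \sqq \beta$ where $\beta$ is a Bernoulli shift of $G$. Since $G$ is infinite, $\beta$ is ergodic, so $\beta \sqq \beta$ is synergodic by Theorem~\ref{thm.locprod}, and it is a free action of $G \times G$. As $G \times G$ is amenable, I would invoke the standard fact that over an amenable group every \pmp action is weakly contained in every free \pmp action; this combines the Ab\'ert--Weiss theorem (Bernoulli shifts are weakly contained in every free action) with the amenable-group fact that every action is weakly contained in a Bernoulli shift, proved by matching empirical distributions over a F\o lner sequence. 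Thus $(\sf{A},\sf{B}) \prec \beta \sqq \beta$. Because synergodicity is isomorphism-invariant, the set of synergodic actions on $(X,\mu)$ is invariant under conjugation by $\mrm{Aut}(X,\mu)$, and by the characterization of weak containment as membership in the weak closure of the isomorphic copies of the containing action, $(\sf{A},\sf{B})$ lies in the weak closure of the (synergodic) copies of $\beta \sqq \beta$. Concretely the amenable weak-containment step is an Ornstein--Weiss tiling argument producing, inside $\beta \sqq \beta$, partitions whose joint $(\sf{A},\sf{B})$-statistics over a large F\o lner set of $G \times G$ match those of the target; this can be executed by hand if one wishes to avoid the cited black boxes. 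This step is the only delicate point in (a).

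For part (b) the goal is to produce a single action of $\mathbb{F}_d \times \mathbb{F}_d$ that no local product can approximate, drawing on the failure of Connes' embedding conjecture. Write $\kappa_\sf{A}, \kappa_\sf{B}$ for the commuting Koopman representations of the two copies of $\mathbb{F}_d$ on $L^2(X,\mu)$; commutativity means they assemble into a $*$-representation $\rho(a \otimes b) = \kappa_\sf{A}(a)\kappa_\sf{B}(b)$ of $C^*(\mathbb{F}_d) \otimes_{\max} C^*(\mathbb{F}_d)$. For a fixed self-adjoint $t$ in the algebraic tensor product and a fixed unit vector $\phi \in L^2(X,\mu)$ put $Q_t(\sf{A},\sf{B};\phi) = \langle \rho(t)\phi,\phi\rangle$. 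Two properties make this a usable obstruction. First, weak convergence of actions on the fixed space $(X,\mu)$ entails strong-operator convergence of Koopman operators, so for fixed finite $t$ and fixed $\phi$ the map $(\sf{A}',\sf{B}') \mapsto Q_t(\sf{A}',\sf{B}';\phi)$ is weakly continuous. Second, if $(\sf{A},\sf{B})$ is isomorphic to a local product, then $\rho$ is unitarily conjugate to a spatial tensor product of representations and hence factors through the minimal norm, giving $|Q_t(\sf{A},\sf{B};\phi)| \le \|t\|_{\min}$ for \emph{every} $\phi$.

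It then remains to build an action realizing $|Q_t(\sf{A},\sf{B};\phi)| > \|t\|_{\min}$, for then the weakly open set $\{(\sf{A}',\sf{B}') : |Q_t(\sf{A}',\sf{B}';\phi)| > \|t\|_{\min}\}$ is a neighborhood of that action meeting no synergodic action, so synergodic actions are not weakly dense and $\mathbb{F}_d$ fails to be richly synergodic. Here I would invoke the refutation of Connes' embedding conjecture via Kirchberg's reformulation: it yields a self-adjoint $t$ with $\|t\|_{\min} < \|t\|_{\max}$, hence commuting unitary representations of $\mathbb{F}_d$ on some $H$ with $\|\rho_H(t)\| > \|t\|_{\min}$; the passage from $\mathbb{F}_2$ to $\mathbb{F}_d$ for all $d \ge 2$ uses that $\mathbb{F}_2$ is a retract of $\mathbb{F}_d$, which leaves the minimal norm unchanged and does not decrease the maximal norm. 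To convert this representation-theoretic separation into a \pmp action, I would realify $H$ and form the Gaussian action of $\mathbb{F}_d \times \mathbb{F}_d$ attached to the orthogonal representation $(g,h) \mapsto \pi_\sf{A}(g)\pi_\sf{B}(h)$: its Koopman representation restricted to the first Wiener chaos reproduces this representation, so a first-chaos unit vector $\phi$ computes $Q_t$ as $\langle \rho_H(t)v,v\rangle$ and may be chosen with $|Q_t| > \|t\|_{\min}$. The principal obstacle of the whole theorem is exactly this last passage: correctly extracting from the failure of Connes embedding a self-adjoint norm gap over the specific group $\mathbb{F}_d$, and verifying that the Gaussian functor transports the operator-norm gap precisely onto the first-chaos moments of a genuine measure-preserving action. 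If one prefers to refute density within $\mrm{Erg}$ rather than $\mrm{Act}$, one further arranges the Gaussian witness to be ergodic by adjoining to $H$ a weakly mixing summand chosen so as not to disturb the relevant correlation.
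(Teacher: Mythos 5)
Your part (a) is essentially the paper's proof: both take the local product of a Bernoulli shift of $G$ with itself, note that it is free and synergodic (Theorem \ref{thm.locprod}), and invoke the amenable-group fact that the isomorphic copies of a free ergodic action are weakly dense in $\mrm{Act}(G \times G,X,\mu)$ --- the paper simply cites Proposition 13.2 of \cite{MR2583950} rather than unwinding this into Ab\'ert--Weiss plus a tiling argument. Part (b) rests on the same two pillars as the paper --- a synergodic action is a local product, so its Koopman representation is a spatial tensor product and is controlled by the minimal norm (this is exactly Proposition \ref{prop.cross}), and Kirchberg's reformulation plus the refutation of Connes' embedding conjecture yields a self-adjoint $t \in \mathbb{C}[\mathbb{F}_d \times \mathbb{F}_d]$ with $\|t\|_{\uotimes} < \|t\|_{\ootimes}$ --- but you close the argument by a genuinely different mechanism. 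The paper never constructs a witnessing action: it combines the density of Koopman representations up to unitary conjugacy in $\mrm{Rep}(G \times G, L^2(X,\mu))$ (Proposition \ref{prop.onee}) with lower semicontinuity of $\rho \mapsto \|\rho(t)\|_{\mrm{op}}$ (Proposition \ref{prop.three}) to show that rich synergodicity would force $\|t\|_{\ootimes} \leq \|t\|_{\uotimes}$ for every $t$ (Corollary \ref{cor.rich}). You instead realize a max-norm-witnessing pair of commuting representations inside the first chaos of a Gaussian action and separate that action from all synergodic ones by a fixed, weakly continuous matrix coefficient. Your route trades the representation-density input for the Gaussian functor; it is sound (self-adjointness of $t$ lets the norm gap be detected by a diagonal coefficient, and realification followed by complexification retains the original representation inside the first chaos), and it buys something the paper's argument does not exhibit, namely an explicit action with a weakly open neighborhood disjoint from the synergodic actions. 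One caveat: your concluding aside about restoring ergodicity is not right as stated --- the Gaussian action of $\pi \oplus \sigma$ is the product of the two Gaussian actions, so adjoining a weakly mixing summand cannot make it ergodic if the Gaussian action of $\pi$ already fails to be --- but this is harmless, since refuting density in $\mrm{Act}(\mathbb{F}_d \times \mathbb{F}_d, X,\mu)$ already refutes rich synergodicity by the remark following the definition, which your main line correctly uses.
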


 The proof of Clause (a) in the above theorem is fairly straightforward, but the proof of Clause (b) relies on the negation of Connes' embedding conjecture which was recently announced in \cite{10.1145/3485628}. The technique we use to prove Clause (b) can be directly extended to establish that any group containing a nonabelian free subgroup fails to be richly synergodic. We consider following conjecture to be highly plausible, although establishing it for nonamenable groups without free subgroups would likely require the somewhat complex coinduction-from-subequivalence-relation methods used in \cite{MR2529949}. 

\begin{conjecture} A countably infinite group is richly synergodic if and only if it is amenable. \end{conjecture}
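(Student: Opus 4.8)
By the reduction noted after the definition of rich synergodicity, it suffices to fix a diffuse space $(X,\mu)$ and prove each clause there. For Clause (a) the plan is to chain three facts about the weak topology on $\mrm{Act}(G\times G,X,\mu)$. First, fix free ergodic actions $\sf{A}_0,\sf{B}_0\in\mrm{Erg}(G,X,\mu)$ and form the local product $\sf{C}=\sf{A}_0\sqq\sf{B}_0$; by Theorem \ref{thm.locprod} it is synergodic, and one checks directly that it is a free ergodic action of $G\times G$. Second, since $G\times G$ is amenable all of its free p.m.p.\ actions lie in a single weak equivalence class (Ab\'ert--Weiss together with the Ornstein--Weiss theory for amenable groups), so every free ergodic action of $G\times G$ is weakly contained in $\sf{C}$. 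Third, by the characterization of weak containment as membership in the weak closure of the conjugacy class (Burton--Kechris), every such action lies in the weak closure of the conjugates of $\sf{C}$, each of which is again synergodic. Since in addition the free ergodic actions are weakly dense in $\mrm{Act}(G\times G,X,\mu)$ (freeness and weak mixing being generic for the infinite group $G\times G$), and we have just placed each of them in the weak closure of the synergodic actions, the synergodic actions are dense in $\mrm{Erg}(G\times G,X,\mu)$.

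For Clause (b) the strategy is to reformulate rich synergodicity as a tensor-approximation property of Koopman data and contradict it using the refutation of Connes' embedding. For an action $(\sf{A},\sf{B})$ and a fixed $\psi\in L^2(X,\mu)$ the correlations $\langle\psi,\sf{A}^g\sf{B}^h\psi\rangle$ depend weakly continuously on the action, since weak convergence of actions yields strong convergence of the associated Koopman operators. In a local product $\sf{A}_0\sqq\sf{B}_0$ the Koopman representation is a tensor product $\kappa_{\sf{A}_0}\otimes\kappa_{\sf{B}_0}$, so these correlations arise from states on $C^*(\mathbb{F}_d)\otimes_{\min}C^*(\mathbb{F}_d)$, i.e.\ from the approximately finite-dimensional tensor class. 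Hence if the synergodic actions were dense in $\mrm{Erg}(\mathbb{F}_d\times\mathbb{F}_d,X,\mu)$, the correlation data of every ergodic action would be a weak limit of such states and so, by closedness of that class, would itself be a minimal-tensor state. Because $C^*(\mathbb{F}_d\times\mathbb{F}_d)=C^*(\mathbb{F}_d)\otimes_{\max}C^*(\mathbb{F}_d)$, this forces the minimal and maximal tensor states on $\mathbb{F}_d\times\mathbb{F}_d$ to coincide, which is exactly the Connes embedding property for $\mathbb{F}_d$; its refutation in \cite{10.1145/3485628} supplies a positive-definite $\phi$ on $\mathbb{F}_d\times\mathbb{F}_d$ that is a maximal- but not a minimal-tensor state.

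The remaining task, and the step I expect to be the main obstacle, is to realize such a separating $\phi$ by a genuine \emph{ergodic} p.m.p.\ action of $\mathbb{F}_d\times\mathbb{F}_d$ whose correlation data reproduces $\phi$. I would do this through the Gaussian functor: pass to the orthogonal representation underlying a real form of the GNS representation of $\phi$, take its associated Gaussian action, and recover $\phi$ from correlations of first-chaos functions. The delicate points are that the Gaussian construction requires a real representation, so one must check that the maximal-versus-minimal separation survives realification; that ergodicity must be secured, equivalently that the underlying representation has no nonzero invariant vectors, and arranged so as not to disturb the first-chaos correlation responsible for the separation; and that the realized correlation must be confirmed to lie outside the closed minimal-tensor class. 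Assembling these produces an ergodic action of $\mathbb{F}_d\times\mathbb{F}_d$ that cannot be weakly approximated by local products, establishing that $\mathbb{F}_d$ fails to be richly synergodic for $d\geq2$.
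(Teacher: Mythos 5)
The statement you are proving is stated in the paper as a \emph{conjecture}, and the paper does not prove it; it proves only Theorem \ref{thm.amenable}, namely that amenable groups are richly synergodic and that $\mathbb{F}_d$ ($d\geq 2$) is not. Your proposal, even granting every step, addresses only these two clauses. The ``only if'' direction of the conjecture requires showing that \emph{every} nonamenable group fails to be richly synergodic, including nonamenable groups with no nonabelian free subgroups, and your argument gives nothing there: both your tensor-norm reformulation and the Gaussian realization are anchored to $C^\ast(\mathbb{F}_d)$ and to the specific Kirchberg-type separation coming from the refutation of Connes' embedding. The paper explicitly flags this residual case as open and suggests it would need coinduction-from-subequivalence-relation machinery. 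So as a proof of the stated conjecture the proposal has a fundamental gap that is not a matter of filling in details.

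Restricting attention to the two clauses of Theorem \ref{thm.amenable}: your Clause (a) argument is essentially the paper's (the paper cites Proposition 13.2 of \cite{MR2583950}, i.e.\ density of the isomorphism class of a single free ergodic action for amenable groups, applied to $\mathsf{L}_G \sqq \mathsf{L}_G$; your weak-equivalence phrasing is the same mechanism). For Clause (b) you have taken a genuinely different and substantially harder route, and the step you yourself identify as the main obstacle --- realizing a max-but-not-min state by an honest ergodic \pmp action via the Gaussian functor, preserving ergodicity and the separation through realification and first chaos --- is a real gap, not a formality. The paper avoids this entirely: it never needs to realize anything by an action. It shows (Proposition \ref{prop.cross}) that synergodic actions have Koopman norms dominated by $\|\cdot\|_{\uotimes}$, and then uses density of unitary conjugates of Koopman representations in $\mrm{Rep}(G\times G,L^2(X,\mu))$ (Proposition H.14 of \cite{MR2583950}) together with lower semicontinuity of $\rho\mapsto\|\rho(\phi)\|_{\mrm{op}}$ (Proposition \ref{prop.three}) to conclude that rich synergodicity forces $\|\phi\|_{\ootimes}\leq\|\phi\|_{\uotimes}$ for all $\phi\in\mathbb{C}[\mathbb{F}_d\times\mathbb{F}_d]$, contradicting the failure of Connes' embedding. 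Working with operator norms and a semicontinuity argument over the \emph{universal} representation sidesteps the need to produce any particular action witnessing the separation; if you want to salvage your state-based version you would either have to complete the delicate Gaussian construction or switch to the paper's norm-based argument.
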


\subsection{Acknowledgments}

We thank Konstantin Slutsky for several helpful suggestions that improved the article.

\section{Averaging on product groups} \label{sec.avv}

\subsection{A weighted ergodic theorem} \label{sec.avvw}

In Section \ref{sec.avv} we deal with some initial considerations about ergodic averages on product groups. We begin with the present Subsection \ref{sec.avvw}, which formulates certain definitions required to cite the statement of a weighted ergodic theorem for actions of discrete groups. If $S$ is a countable set and $\mathfrak{w}$ is a measure on $S$, for $s \in S$ we will typically write $\mathfrak{w}(s)$ instead of $\mathfrak{w}(\{s\})$. If $G$ is a countable group and $\mathfrak{w}$ and $\mathfrak{u}$ are probability measures on $G$, we define the \textbf{convolution product} to be the probability measure $\mathfrak{w} \ast \mathfrak{u}$ on $G$ given by \[ [\mathfrak{w} \ast \mathfrak{u}](g) = [\mathfrak{w} \times \mathfrak{u}]\Bigl(\bigl\{ (h,k) \in G \times G: hk = g \bigr \} \Bigr) \] where $\mathfrak{w} \times \mathfrak{u}$ denotes the product measure on $G \times G$. For $n \in \mathbb{N}$ we also define the $n$\textbf{-fold autoconvolution of }$\mathfrak{w}$ to be the probability measure $\mathfrak{w}^{\ast n}$ on $G$ constructed recursively as follows. \[ \mathfrak{w}^{\ast n}(g) =  \begin{cases} \mathfrak{w} &\mbox{ if }n=1 \\ \mathfrak{w} \ast \left(\mathfrak{w}^{\ast(n-1)}\right) & \mbox{ if }n \geq 2 \end{cases} \] Writing $\mathfrak{w}^{\times n}$ for the measure on $G^n$ given by the $n$-fold product of $\mathfrak{w}$, we note that $\mathfrak{w}^{\ast n}$ has an alternative expression as: \begin{equation} \label{eq.conv-def} \mathfrak{w}^{\ast n}(g) = \mathfrak{w}^{\times n}\Bigl(\bigl\{ (g_1,\ldots,g_n) \in G^n: g_n \cdots g_1 = g \bigr\} \Bigr) \end{equation}   We introduce the following terminology regarding a probability measure $\mathfrak{w}$ on a countable group $G$. \begin{itemize} \item We define $\mathfrak{w}$ to be \textbf{symmetric} if $\mathfrak{w}(g) = \mathfrak{w}(g^{-1})$ for all $g \in G$. \item We define $\mathfrak{w}$ to be \textbf{generating} if the support $\{g \in G:\mathfrak{w}(g) > 0 \}$ generates the group $G$. \item We define $\mathfrak{w}$ to be \textbf{absolutely generating} if $\mathfrak{w}$ is symmetric and the measure $\mathfrak{w}^{\ast 2}$ is generating.\end{itemize}

\begin{remark} We remark that is it possible for $\mathfrak{w}$ to be symmetric and generating but not absolutely generating. For example, let $\mathfrak{d}_g$ denote the pure point measure at $g \in G$. Choose $G = \mathbb{Z}$ and $\mathfrak{w} = \frac{1}{2}(\mathfrak{d}_{-1}+\mathfrak{d}_1)$. Then $\mathfrak{w}$ is symmetric and generating but the support of the measure \[ \mathfrak{w}^{\ast 2} = \frac{1}{4}\mathfrak{d}_{-2} + \frac{1}{2} \mathfrak{d}_0 + \frac{1}{4} \mathfrak{d}_2 \] generates the proper subgroup $2\mathbb{Z}$. However, writing $e$ for the identity of $G$ it is straightforward to verify that if $\mathfrak{w}(e) > 0$ then $\mathfrak{w}$ is absolutely generating if and only if $\mathfrak{w}$ is symmetric and generating. In particular, this implies that every countable group admits an absolutely generating probability measure. \end{remark}

The following appears as Theorem 1 in \cite{MR2923460}.

\begin{theorem}[Weighted mean ergodic theorem for group actions] \label{thm.oscc} Let $G$ be a countable group and let $\mathfrak{w}$ be an absolutely generating probability measure on $G$. Then the sequence $(\mathfrak{w}^{\ast n})_{n \in \mathbb{N}}$ is $p$-mean ergodic for every $p \in [1,\infty)$.  \end{theorem}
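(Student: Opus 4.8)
The plan is to recognize that the $n$-fold autoconvolution corresponds to iterating a single averaging operator, and then to apply the spectral theorem for self-adjoint contractions. First I would verify the identity $\sf{A}(\mathfrak{w}^{\ast n}) = \sf{A}(\mathfrak{w})^n$, which follows directly by unwinding the definition in \eqref{eq.conv-def}: writing $T = \sf{A}(\mathfrak{w})$ and introducing the Koopman operators $U_g \psi = \psi \circ \sf{A}^g$, the composition $T^n$ expands into a sum over $(g_1,\dots,g_n)$ weighted by $\mathfrak{w}^{\times n}$ of the operators $U_{g_n \cdots g_1}$, whose total weight on a fixed $g$ is exactly $\mathfrak{w}^{\ast n}(g)$. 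It thus suffices to prove that $T^n \psi \to \mathbb{I}_\sf{A}\psi$ in $L^p$ for every $\psi \in L^p(X,\mu)$.

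Second, I would treat the Hilbert space case $p = 2$. Each $U_g$ is unitary with $U_g^* = U_{g^{-1}}$, so the symmetry of $\mathfrak{w}$ makes $T$ a self-adjoint contraction on $L^2(X,\mu)$; its spectrum lies in $[-1,1]$ and it admits a spectral resolution $T = \int_{[-1,1]} \lambda \, dE(\lambda)$. Writing $T^n = \int \lambda^n \, dE(\lambda)$ and applying dominated convergence, $T^n$ converges strongly to the spectral projection $E(\{1\})$ onto $\ker(T-1)$, \emph{provided} there is no spectral atom at $-1$, i.e. provided $-1$ is not an eigenvalue. I would then identify $\ker(T-1)$ with the $\sf{A}$-invariant functions: if $\psi = \sum_g \mathfrak{w}(g) U_g\psi$, the equality case of the triangle inequality in the strictly convex space $L^2$ forces $U_g\psi = \psi$ for every $g$ in the support of $\mathfrak{w}$, and since $\mathfrak{w}$ is generating this yields full $\sf{A}$-invariance, so $E(\{1\}) = \mathbb{I}_\sf{A}$.

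Third, the absolutely generating hypothesis enters precisely in excluding the eigenvalue $-1$, and I expect this to be the crux. Suppose $T\psi = -\psi$; then $T^2\psi = \psi$, and since $T^2 = \sf{A}(\mathfrak{w}^{\ast 2})$ with $\mathfrak{w}^{\ast 2}$ symmetric and generating, the invariance argument of the previous step applied to $\mathfrak{w}^{\ast 2}$ shows $\psi$ is $\sf{A}$-invariant, whence $T\psi = \psi$ and so $\psi = 0$. This is exactly where being merely symmetric and generating is insufficient, as the example $G = \mathbb{Z}$, $\mathfrak{w} = \tfrac{1}{2}(\mathfrak{d}_{-1}+\mathfrak{d}_1)$ illustrates: there $-1$ is a genuine eigenvalue and the powers oscillate. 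With the atom at $-1$ excluded, the $L^2$ convergence $T^n\psi \to \mathbb{I}_\sf{A}\psi$ follows.

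Finally, I would transfer to general $p \in [1,\infty)$ by a density argument, using that both $T$ and $\mathbb{I}_\sf{A}$ contract every $L^p$-norm, so $T^n - \mathbb{I}_\sf{A}$ is uniformly bounded on $L^p$ and it suffices to verify convergence on a dense subset. For $1 \le p \le 2$ I would use $L^2$ as the dense subset together with the inequality $\|\cdot\|_p \le \|\cdot\|_2$ valid on a probability space. For $2 < p < \infty$ I would use the dense subset $L^\infty$, on which $(T^n - \mathbb{I}_\sf{A})\psi$ stays bounded by $2\|\psi\|_\infty$, and interpolate as $\|(T^n - \mathbb{I}_\sf{A})\psi\|_p \le \|(T^n - \mathbb{I}_\sf{A})\psi\|_2^{2/p}\,(2\|\psi\|_\infty)^{1 - 2/p} \to 0$. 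Combining the cases shows that $(\mathfrak{w}^{\ast n})_{n \in \mathbb{N}}$ is $p$-mean ergodic for every $p \in [1,\infty)$.
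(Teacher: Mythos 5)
The paper does not prove this statement at all: it is imported verbatim as Theorem 1 of the cited reference \cite{MR2923460}, so there is no internal argument to compare against. Your proposal is, as far as I can check, a correct and essentially self-contained proof, and it follows what is the standard route for results of this type. The identity $\mathsf{A}(\mathfrak{w}^{\ast n}) = \mathsf{A}(\mathfrak{w})^n$ is right given the paper's convention $g_n \cdots g_1 = g$ in \eqref{eq.conv-def}, which is exactly the order produced by composing the operators in \eqref{eq.defop}; the symmetry of $\mathfrak{w}$ together with $U_g^\ast = U_{g^{-1}}$ does make $T$ a self-adjoint contraction; and your use of the absolutely generating hypothesis is precisely where it belongs, since for a self-adjoint operator the spectral projection $E(\{-1\})$ vanishes if and only if $-1$ is not an eigenvalue, and your $T^2 = \mathsf{A}(\mathfrak{w}^{\ast 2})$ argument rules that out. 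Two small points you should make explicit if you write this up: first, the identification $E(\{1\}) = \mathbb{I}_\mathsf{A}$ on $L^2$ uses the standard fact that a function fixed by every $U_g$ is measurable with respect to $\eusc{E}_\mathsf{A}$ up to null sets, so the fixed space of the Koopman representation is $L^2(X,\eusc{E}_\mathsf{A},\mu)$; second, in the equality case of the triangle inequality it is cleaner in Hilbert space to argue via $\re\langle U_g\psi,\psi\rangle \leq \|\psi\|^2$ with equality forcing $U_g\psi = \psi$, which avoids any fuss about strict convexity for countably infinite convex combinations. The $L^p$ transfer by uniform boundedness, the inclusion $L^2 \subseteq L^p$ for $p \leq 2$, and the interpolation $\|f\|_p \leq \|f\|_2^{2/p}\|f\|_\infty^{1-2/p}$ for $p > 2$ are all correct, and correctly exclude $p = \infty$.
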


\subsection{Convolutions of measures on product groups}

We now establish the following 

\begin{proposition} \label{prop.ind-conv} Let $G$ and $H$ be countable groups and let $\mathfrak{w}$ and $\mathfrak{u}$ be probability measures on $G$ and $H$ respectively. Then for all $n \in \mathbb{N}$ we have $(\mathfrak{w} \times \mathfrak{u})^{\ast n} = \mathfrak{w}^{\ast n} \times \mathfrak{u}^{\ast n}$.  \end{proposition}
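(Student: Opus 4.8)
The plan is to prove the identity directly from the combinatorial description of the autoconvolution in \eqref{eq.conv-def}, exploiting the fact that the group law on $G \times H$ is applied coordinatewise. Fix $n \in \mathbb{N}$ and a point $(g,h) \in G \times H$. Applying \eqref{eq.conv-def} to the measure $\mathfrak{w} \times \mathfrak{u}$ on the product group $G \times H$, I would write $(\mathfrak{w} \times \mathfrak{u})^{\ast n}(g,h)$ as the $(\mathfrak{w} \times \mathfrak{u})^{\times n}$-measure of the set of tuples $\bigl( (g_1,h_1),\ldots,(g_n,h_n) \bigr) \in (G \times H)^n$ satisfying $(g_n,h_n)\cdots (g_1,h_1) = (g,h)$. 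Since multiplication in $G \times H$ is coordinatewise, this single equation splits into the pair of conditions $g_n \cdots g_1 = g$ and $h_n \cdots h_1 = h$.

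The key structural step is then to transport everything along the canonical measure-space isomorphism $(G \times H)^n \cong G^n \times H^n$ obtained by separating the two coordinates. Under this reindexing the product measure factorizes as $(\mathfrak{w} \times \mathfrak{u})^{\times n} = \mathfrak{w}^{\times n} \times \mathfrak{u}^{\times n}$, and the constraint set above becomes the Cartesian product $A \times B$, where $A = \{ (g_1,\ldots,g_n) : g_n \cdots g_1 = g\}$ and $B = \{ (h_1,\ldots,h_n): h_n\cdots h_1 = h \}$. Because a product measure assigns to a rectangle the product of the marginal measures, I obtain $(\mathfrak{w} \times \mathfrak{u})^{\times n}(A \times B) = \mathfrak{w}^{\times n}(A)\, \mathfrak{u}^{\times n}(B)$. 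Invoking \eqref{eq.conv-def} once more, now for $\mathfrak{w}$ on $G$ and for $\mathfrak{u}$ on $H$ separately, identifies these two factors as $\mathfrak{w}^{\ast n}(g)$ and $\mathfrak{u}^{\ast n}(h)$ respectively, so that $(\mathfrak{w} \times \mathfrak{u})^{\ast n}(g,h) = \mathfrak{w}^{\ast n}(g)\,\mathfrak{u}^{\ast n}(h) = (\mathfrak{w}^{\ast n} \times \mathfrak{u}^{\ast n})(g,h)$, which is the desired identity.

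I expect no serious obstacle here: the statement is essentially the assertion that autoconvolution commutes with direct products of groups, and the only point demanding care is purely bookkeeping, namely verifying that the reindexing isomorphism $(G \times H)^n \cong G^n \times H^n$ does carry the product measure $(\mathfrak{w}\times\mathfrak{u})^{\times n}$ to $\mathfrak{w}^{\times n}\times\mathfrak{u}^{\times n}$ and the constraint set to the rectangle $A \times B$. An alternative, equally clean route would be induction on $n$ using the recursive definition of $\mathfrak{w}^{\ast n}$ together with the elementary lemma that $(\mathfrak{a}\times\mathfrak{b})\ast(\mathfrak{c}\times\mathfrak{d}) = (\mathfrak{a}\ast\mathfrak{c})\times(\mathfrak{b}\ast\mathfrak{d})$ for probability measures on a product group; I would favour the direct argument above since it requires establishing no auxiliary identity.
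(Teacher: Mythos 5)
Your argument is correct and is essentially identical to the paper's proof: both pass through the reindexing bijection $(G \times H)^n \cong G^n \times H^n$, observe that it carries $(\mathfrak{w}\times\mathfrak{u})^{\times n}$ to $\mathfrak{w}^{\times n}\times\mathfrak{u}^{\times n}$ and the constraint set to the rectangle $E_g \times F_h$, and then apply \eqref{eq.conv-def} three times. No changes needed.
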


\begin{proof}[Proof of Proposition \ref{prop.ind-conv}] In the present proof we will need to manipulate lists of tuples. In the interest of notational clarity we will delimit such a list with square brackets, reserving parentheses for the tuples which comprise the list. Fix $G,H,\mathfrak{w},\mathfrak{u}$ and $n$ as in the statement of Proposition \ref{prop.ind-conv} and define the `currying' bijection $\mathscr{C}:(G \times H)^n \to G^n \times H^n$ as follows. \[ \mathscr{C}\bigl( [(g_1,h_1),\ldots,(g_n,h_n) \bigr] \bigr) = \bigl[ (g_1,\ldots,g_n),(h_1,\ldots,h_n)\bigr]  \] If $E_1,\ldots,E_n$ are subsets of $G$ and $F_1,\ldots,F_n$ are subsets of $H$ we find that \[ \mathscr{C}^{-1}\bigl( (E_1 \times \cdots \times E_n) \times (F_1 \times \cdots \times F_n)\bigr) = (E_1 \times F_1) \times \cdots \times (E_n \times F_n) \] and therefore: \begin{equation} \label{eq.push} \mathscr{C}_{\#}\bigl( (\mathfrak{w} \times \mathfrak{u})^{\times n}  \bigr) =  (\mathfrak{w}^{\times n}) \times (\mathfrak{u}^{\times n})  \end{equation}  Now, for $g \in G$ and $h \in H$ we make the following definitions. \[ E_g  = \{(g_1,\ldots,g_n) \in G^n: g_n \cdots g_1 = g\}  \qquad  \qquad F_h = \{(h_1,\ldots,h_n) \in H^n: h_n \cdots h_1 = h\} \] Also define \[  D_{g,h} = \Bigl\{[ (g_1,h_1),\ldots,(g_n,h_n)] \in (G \times H)^n : (g_n,h_n) \cdots (g_1,h_1) = (g,h)  \Bigr \} \] Combining the last two displays with the definition (\ref{eq.conv-def}) we obtain: \[  \mathfrak{w}^{\ast n}(g) = \mathfrak{w}^{\times n}(E_g) \qquad \qquad \mathfrak{u}^{\ast n}(h) = \mathfrak{u}^{\times n}(F_h) \qquad \qquad (\mathfrak{w} \times \mathfrak{u})^{\ast n}(g,h) = (\mathfrak{w} \times \mathfrak{u})^{\times n}(D_{g,h}) \]  By construction we have $\mathscr{C}(D_{g,h}) = E_g \times F_h$ and so using (\ref{eq.push}) it follows that \[ (\mathfrak{w} \times \mathfrak{u})^{\times n}(D_{g,h}) = \mathfrak{w}^{\times n}(E_g)\,  \mathfrak{u}^{\times n}(F_h) \] and by combining the previous two displays we obtain: \[ (\mathfrak{w} \times \mathfrak{u})^{\ast n}(g,h)= \mathfrak{w}^{\ast n}(g) \mathfrak{u}^{\ast n}(h) \] This completes the proof of Proposition \ref{prop.ind-conv}. \end{proof}

From Theorem \ref{thm.oscc} and Proposition \ref{prop.ind-conv} we obtain the following. 

\begin{corollary} \label{cor.oscc} Let $G$ and $H$ be countable groups, and let $\mathfrak{w}$ and $\mathfrak{u}$ be absolutely generating probability measures on $G$ and $H$ respectively. Then the sequence $(\mathfrak{w}^{\ast n} \times \mathfrak{u}^{\ast n})_{n \in \mathbb{N}}$ is $p$-mean ergodic for all $p \in [1,\infty)$.\end{corollary}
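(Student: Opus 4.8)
The plan is to deduce this directly from Theorem \ref{thm.oscc} and Proposition \ref{prop.ind-conv} by verifying that the product measure $\mathfrak{w} \times \mathfrak{u}$ is itself absolutely generating on $G \times H$. Once that is in hand, Theorem \ref{thm.oscc} gives that the sequence $\bigl((\mathfrak{w} \times \mathfrak{u})^{\ast n}\bigr)_{n \in \mathbb{N}}$ is $p$-mean ergodic for every $p \in [1,\infty)$, and Proposition \ref{prop.ind-conv} lets me rewrite $(\mathfrak{w} \times \mathfrak{u})^{\ast n}$ as $\mathfrak{w}^{\ast n} \times \mathfrak{u}^{\ast n}$, which is exactly the asserted sequence. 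So the entire content of the corollary is funneled into a single structural claim about $\mathfrak{w} \times \mathfrak{u}$.

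To verify that $\mathfrak{w} \times \mathfrak{u}$ is absolutely generating I must check that it is symmetric and that its second autoconvolution is generating. Symmetry is immediate: since $(\mathfrak{w} \times \mathfrak{u})(g,h) = \mathfrak{w}(g)\mathfrak{u}(h)$ and both $\mathfrak{w}$ and $\mathfrak{u}$ are symmetric, we get $(\mathfrak{w} \times \mathfrak{u})(g^{-1},h^{-1}) = (\mathfrak{w} \times \mathfrak{u})(g,h)$. For the generating condition I would apply Proposition \ref{prop.ind-conv} at $n=2$ to write $(\mathfrak{w} \times \mathfrak{u})^{\ast 2} = \mathfrak{w}^{\ast 2} \times \mathfrak{u}^{\ast 2}$, so that the support of $(\mathfrak{w} \times \mathfrak{u})^{\ast 2}$ factors as a product $S \times T$, where $S$ is the support of $\mathfrak{w}^{\ast 2}$ and $T$ is the support of $\mathfrak{u}^{\ast 2}$. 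By the hypothesis that $\mathfrak{w}$ and $\mathfrak{u}$ are absolutely generating, $S$ generates $G$ and $T$ generates $H$.

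The one delicate point — and the step I expect to be the main obstacle — is that a product $S \times T$ of generating sets need not generate $G \times H$; for instance in $(\mathbb{Z}/2\mathbb{Z})^2$ the set $\{1\} \times \{1\}$ generates only the diagonal subgroup. The resolution exploits symmetry to place the identities inside the supports: because $\mathfrak{w}$ is symmetric, $\mathfrak{w}^{\ast 2}(e) = \sum_{g} \mathfrak{w}(g)\mathfrak{w}(g^{-1}) = \sum_{g} \mathfrak{w}(g)^2 > 0$, so that $e_G \in S$, and likewise $e_H \in T$. Consequently $S \times T$ contains both $S \times \{e_H\}$ and $\{e_G\} \times T$; the first generates $G \times \{e_H\}$ and the second generates $\{e_G\} \times H$, and together these exhaust $G \times H$. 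This shows $(\mathfrak{w} \times \mathfrak{u})^{\ast 2}$ is generating, completing the verification that $\mathfrak{w} \times \mathfrak{u}$ is absolutely generating and hence, via Theorem \ref{thm.oscc} and Proposition \ref{prop.ind-conv}, the proof of the corollary.
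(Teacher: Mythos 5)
Your proof is correct and follows the same route the paper intends: the paper derives Corollary \ref{cor.oscc} directly from Theorem \ref{thm.oscc} and Proposition \ref{prop.ind-conv}, leaving the fact that $\mathfrak{w} \times \mathfrak{u}$ is absolutely generating as an unstated (it is asserted without proof later, in the proof of Proposition \ref{prop.onside}) verification. Your careful treatment of that point --- in particular using symmetry to get $e_G$ and $e_H$ into the supports of the second autoconvolutions, which is exactly where a naive ``product of generating sets'' argument would fail --- is the right way to fill the gap.
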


\subsection{Ergodic averages for semi-invariant sets}

Given a subset $R$ of an ambient set $S$, we write $\mathbf{1}_R:S \to \{0,1\}$ for the indicator function of $R$. When there is an ambient probability space $(X,\mu)$ understood, we will abuse notation by identifying $z \in \mathbb{C}$ with the constant function on $X$ taking value $z$.

\begin{proposition} \label{prop.onside} Let $G$ and $H$ be countable groups and let $\mathfrak{w}$ be an absolutely generating probability measure on $G$. Let $(X,\mu)$ be a standard probability space and let $(\sf{A},\sf{B}) \in \mrm{Erg}(G \times H,X,\mu)$. Also let $D \in \eusc{E}_{\sf{B}}$. Then for any $p \in [1,\infty)$ we have: \[  \lim_{n \to \infty} ||\mu(D) -  \mathsf{A}(\mathfrak{w}^{\ast n})\mathbf{1}_D||_p = 0  \] \end{proposition}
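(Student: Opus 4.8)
The plan is to reduce the statement to an identification of the limit furnished by the mean ergodic theorem. Since $\mathfrak{w}$ is absolutely generating, Theorem \ref{thm.oscc} asserts that the sequence $(\mathfrak{w}^{\ast n})_{n \in \mathbb{N}}$ is $p$-mean ergodic, so that $\sf{A}(\mathfrak{w}^{\ast n})\mathbf{1}_D$ converges in $L^p$ to $\mathbb{I}_{\sf{A}}\mathbf{1}_D$. Consequently it suffices to prove that $\mathbb{I}_{\sf{A}}\mathbf{1}_D$ is almost everywhere equal to the constant $\mu(D)$, since the desired convergence then follows from the triangle inequality.

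The heart of the matter is thus to show that $\mathbb{I}_{\sf{A}}\mathbf{1}_D$ is constant, and I would do this by arguing that it is $\eusc{E}_{\sf{A},\sf{B}}$-measurable and then invoking ergodicity. By definition $\mathbb{I}_{\sf{A}}\mathbf{1}_D$ is $\eusc{E}_{\sf{A}}$-measurable, so it remains to check that it is also $\sf{B}$-invariant. First, because $D \in \eusc{E}_{\sf{B}}$ the indicator $\mathbf{1}_D$ is itself $\sf{B}$-invariant, meaning $\mathbf{1}_D \circ \sf{B}^h = \mathbf{1}_D$ almost everywhere for every $h \in H$. Second, as was observed before the definition of the synergodic decomposition, the $\sigma$-algebra $\eusc{E}_{\sf{A}}$ is $\sf{B}$-stable; since each $\sf{B}^h$ also preserves $\mu$, the Koopman operator $\psi \mapsto \psi \circ \sf{B}^h$ commutes with the conditional expectation $\mathbb{I}_{\sf{A}}$. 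Combining these two facts gives $(\mathbb{I}_{\sf{A}}\mathbf{1}_D) \circ \sf{B}^h = \mathbb{I}_{\sf{A}}(\mathbf{1}_D \circ \sf{B}^h) = \mathbb{I}_{\sf{A}}\mathbf{1}_D$ for all $h$, so $\mathbb{I}_{\sf{A}}\mathbf{1}_D$ is $\sf{B}$-invariant and hence $\eusc{E}_{\sf{B}}$-measurable. Therefore $\mathbb{I}_{\sf{A}}\mathbf{1}_D$ is measurable with respect to $\eusc{E}_{\sf{A}} \cap \eusc{E}_{\sf{B}} = \eusc{E}_{\sf{A},\sf{B}}$, which equals $\eusc{N}_X$ by the ergodicity of $(\sf{A},\sf{B})$. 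A function measurable with respect to the trivial $\sigma$-algebra is constant almost everywhere, and since conditional expectation preserves integrals the value of this constant is $\int_X \mathbb{I}_{\sf{A}}\mathbf{1}_D \, \dee\mu = \int_X \mathbf{1}_D \, \dee\mu = \mu(D)$, as required.

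The step I expect to require the most care is the commutation of the $\sf{B}$-Koopman operators with $\mathbb{I}_{\sf{A}}$. Although this is a standard consequence of the $\sf{B}$-stability of $\eusc{E}_{\sf{A}}$ together with measure preservation, writing it out cleanly amounts to verifying two points: that $\psi \circ \sf{B}^h$ is $\eusc{E}_{\sf{A}}$-measurable whenever $\psi$ is, which is exactly the stability condition $(\sf{B}^h)_{\uparrow}[\eusc{E}_{\sf{A}}] = \eusc{E}_{\sf{A}}$; and that the averaging identity characterizing $\mathbb{I}_{\sf{A}}$ is preserved under the measure-preserving change of variables induced by $\sf{B}^h$. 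Once this commutation is established the remainder of the argument is purely formal, consisting only of the reduction via Theorem \ref{thm.oscc} and the bookkeeping with invariant $\sigma$-algebras.
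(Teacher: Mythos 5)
Your proof is correct, but it identifies the limit by a different mechanism than the paper. The paper never invokes the mean ergodic theorem for the action of $G$ alone: it picks an absolutely generating measure $\mathfrak{u}$ on $H$, applies Corollary \ref{cor.oscc} to the ergodic $(G \times H)$-action to get $\mathsf{B}(\mathfrak{u}^{\ast n})\mathsf{A}(\mathfrak{w}^{\ast n})\mathbf{1}_D \to \mu(D)$ in $L^p$, and then shows by a direct Fubini computation that the $\mathsf{B}$-average is invisible because $D \in \eusc{E}_{\mathsf{B}}$, so that the double average coincides pointwise with $\mathsf{A}(\mathfrak{w}^{\ast n})\mathbf{1}_D$. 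You instead apply Theorem \ref{thm.oscc} to the (possibly non-ergodic) restriction $\mathsf{A}$ and identify the limit $\mathbb{I}_{\mathsf{A}}\mathbf{1}_D$ as the constant $\mu(D)$, using the commutation of the $\mathsf{B}$-Koopman operators with $\mathbb{I}_{\mathsf{A}}$ (valid by the $\mathsf{B}$-stability of $\eusc{E}_{\mathsf{A}}$ together with measure preservation) to place $\mathbb{I}_{\mathsf{A}}\mathbf{1}_D$ in $\eusc{E}_{\mathsf{A}} \cap \eusc{E}_{\mathsf{B}} = \eusc{E}_{\mathsf{A},\mathsf{B}} = \eusc{N}_X$. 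Both routes are sound. Yours is more structural and needs neither Proposition \ref{prop.ind-conv} nor Corollary \ref{cor.oscc} at this point; moreover your key identity $\mathbb{I}_{\mathsf{A}}\mathbf{1}_D = \mu(D)$ for $D \in \eusc{E}_{\mathsf{B}}$ already yields Lemma \ref{lem.semi} by integrating against $\mathbf{1}_C$ for $C \in \eusc{E}_{\mathsf{A}}$, so it would shortcut the quantitative estimates of Subsection \ref{sec.semprf} as well. The paper's route stays entirely at the level of explicit weighted averages, requiring nothing about conditional expectations beyond the statement of mean ergodicity, and it sets up the computational template that is reused almost verbatim in the proof of Lemma \ref{lem.semi}.
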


\begin{proof}[Proof of Proposition \ref{prop.onside}] Choose an absolutely generating probability measure $\mathfrak{u}$ on $H$ and note that the probability measure $\mathfrak{w} \times \mathfrak{u}$ is an absolutely generating probability measure on $G \times H$. Since the action $(\mathsf{A},\mathsf{B})$ was assumed to be ergodic, by applying Corollary \ref{cor.oscc} we find:  \begin{equation} \label{eq.ghh-erg} \lim_{n \to \infty} ||\mu(D) -\mathsf{B}(\mathfrak{u}^{\ast n})\mathsf{A}(\mathfrak{w}^{\ast n}) \mathbf{1}_D||_p   = 0 \end{equation} Now, for all $x \in X$ we have: \begin{align} \label{eq.erg-0.9} [\mathsf{B}(\mathfrak{u}^{\ast n})\mathsf{A}(\mathfrak{w}^{\ast n}) \mathbf{1}_D](x) & = \sum_{(g,h) \in G \times H} \mathfrak{w}^{\ast n}(g) \mathfrak{u}^{\ast n}(h)\mathbf{1}_D(\mathsf{B}^h\mathsf{A}^g x) \\& =  \sum_{(g,h) \in G \times H} \mathfrak{w}^{\ast n}(g) \mathfrak{u}^{\ast n}(h)\mathbf{1}_D(\mathsf{A}^g x) \label{eq.erg-1}  \\ & =  \left( \sum_{g \in G} \mathfrak{w}^{\ast n}(g) \mathbf{1}_D(\mathsf{A}^g x)  \right) \left( \sum_{h \in H} \mathfrak{u}^{\ast n}(h) \right) \label{eq.erg-3} \\ & =  \sum_{g \in G} \mathfrak{w}^{\ast n}(g) \mathbf{1}_D(\mathsf{A}^g x)   \label{eq.erg-4} \\ & =[\mathsf{A}(\mathfrak{w}^{\ast n})\mathbf{1}_D](x) \label{eq.erg-4.1} \end{align} This computation may be justified as follows.

\begin{itemize} \item The equality in (\ref{eq.erg-0.9}) follows from the definition (\ref{eq.defop}).  \item (\ref{eq.erg-1}) follows from (\ref{eq.erg-0.9}) since $D$ was assumed to be a $\mathsf{B}$-invariant set and therefore $\mathbf{1}_D(x) = \mathbf{1}_D(\mathsf{B}^hx)$ for all $x \in X$. \item (\ref{eq.erg-3}) follows from (\ref{eq.erg-1}) by the discrete Fubini theorem since all relevant sums are absolutely convergent. \item (\ref{eq.erg-4}) follows from (\ref{eq.erg-3}) since $\mathfrak{u}^{\ast n}$ is a probability measure on $H$. \item (\ref{eq.erg-4.1}) follows from (\ref{eq.erg-4}) again by (\ref{eq.defop}). \end{itemize} Proposition \ref{prop.onside} now follows by combining (\ref{eq.ghh-erg}) with the last computation.  \end{proof}

\section{Proof of Theorems \ref{thm.synerg} and \ref{thm.locprod}}

\subsection{Proof of Theorem \ref{thm.synerg}}

\subsubsection{Reduction to Lemma \ref{lem.semi}} \label{sec.semred}

\begin{definition} \label{def.statindep} Let $(X,\mu)$ be a standard probability space and let $\eusc{S}$ and $\eusc{T}$ be two measurable $\sigma$-algebras on $X$. We define $\eusc{S}$ and $\eusc{T}$ to be \textbf{statistically independent} if $\mu(C \cap D) = \mu(C)\mu(D)$ for all $C \in \eusc{S}$ and $D \in \eusc{T}$. \end{definition}

The following lemma will be proved in Subsection \ref{sec.semprf} below.

 \begin{lemma} \label{lem.semi} Let $G$ and $H$ be countable groups, let $(X,\mu)$ be a standard probability space and let $(\sf{A},\sf{B}) \in \mrm{Erg}(G \times H,X,\mu)$. Then the $\sigma$-algebras $\eusc{E}_\sf{A}$ and $\eusc{E}_{\sf{B}}$ are statistically independent. \end{lemma}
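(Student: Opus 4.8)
The plan is to fix $C \in \eusc{E}_\sf{A}$ and $D \in \eusc{E}_\sf{B}$ and establish the single numerical identity $\mu(C \cap D) = \mu(C)\mu(D)$, which is precisely the assertion of statistical independence. The engine will be Proposition \ref{prop.onside}: recalling that every countable group admits an absolutely generating probability measure, I would choose such a measure $\mathfrak{w}$ on $G$, so that Proposition \ref{prop.onside} (applied to the $\sf{B}$-invariant set $D$, using the ergodicity of the full action $(\sf{A},\sf{B})$) yields $\mathsf{A}(\mathfrak{w}^{\ast n})\mathbf{1}_D \to \mu(D)$ in $L^1(X,\mu)$. The idea is then to pair these averages against the bounded test function $\mathbf{1}_C$ and evaluate the resulting integral in two different ways.

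The first evaluation shows the pairing is constant in $n$ and equals $\mu(C \cap D)$. Expanding the operator via its definition (\ref{eq.defop}) gives
\[ \int_X \mathbf{1}_C \cdot \mathsf{A}(\mathfrak{w}^{\ast n})\mathbf{1}_D \, d\mu = \sum_{g \in G} \mathfrak{w}^{\ast n}(g) \int_X \mathbf{1}_C(x)\,\mathbf{1}_D(\mathsf{A}^g x)\, d\mu(x). \]
Here the decisive step is the $\sf{A}$-invariance of $C$: since $C \in \eusc{E}_\sf{A}$ we have $\mathbf{1}_C = \mathbf{1}_C \circ \mathsf{A}^g$ almost everywhere for each $g$, so the integrand equals $\mathbf{1}_C(\mathsf{A}^g x)\mathbf{1}_D(\mathsf{A}^g x) = \mathbf{1}_{C \cap D}(\mathsf{A}^g x)$ almost everywhere, and the measure-preservation of $\mathsf{A}^g$ turns each inner integral into $\mu(C \cap D)$. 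As $\mathfrak{w}^{\ast n}$ is a probability measure on $G$, the whole sum collapses to $\mu(C \cap D)$, independently of $n$.

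The second evaluation is the limit. Since $\mathbf{1}_C \in L^\infty(X,\mu)$ with $||\mathbf{1}_C||_\infty \le 1$, the $L^1$-convergence supplied by Proposition \ref{prop.onside} gives
\[ \left| \int_X \mathbf{1}_C \cdot \bigl( \mathsf{A}(\mathfrak{w}^{\ast n})\mathbf{1}_D - \mu(D) \bigr) \, d\mu \right| \le ||\mathbf{1}_C||_\infty \, ||\mathsf{A}(\mathfrak{w}^{\ast n})\mathbf{1}_D - \mu(D)||_1 \longrightarrow 0, \]
so that $\int_X \mathbf{1}_C \cdot \mathsf{A}(\mathfrak{w}^{\ast n})\mathbf{1}_D \, d\mu \to \mu(D)\int_X \mathbf{1}_C \, d\mu = \mu(C)\mu(D)$. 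Comparing the two evaluations of the same sequence of integrals forces $\mu(C \cap D) = \mu(C)\mu(D)$, which completes the proof.

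I do not expect a serious obstacle once Proposition \ref{prop.onside} is in hand; the work has effectively been front-loaded into that proposition. The only point requiring care is keeping the two roles of the hypotheses straight: the averaging must be performed over the $G$-factor alone so that the $\sf{A}$-invariance of $C$ can be invoked term by term, whereas the ergodicity that drives the convergence in Proposition \ref{prop.onside} is the ergodicity of the \emph{joint} action $(\sf{A},\sf{B})$ on $(X,\mu)$, not of $\sf{A}$ or $\sf{B}$ separately.
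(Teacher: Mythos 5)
Your proof is correct, and it takes a genuinely lighter route than the paper's. Both arguments run on the same engine, namely Proposition \ref{prop.onside} and hence ultimately the weighted mean ergodic theorem on the product group, but they exploit the two invariance hypotheses differently. The paper averages over both factors at once: it applies Corollary \ref{cor.oscc} to $\mathbf{1}_{C \cap D}$, uses both invariances together with commutativity to factor $\mathsf{A}(\mathfrak{w}^{\ast n})\mathsf{B}(\mathfrak{u}^{\ast n})\mathbf{1}_{C\cap D}$ as the pointwise product $[\mathsf{A}(\mathfrak{w}^{\ast n})\mathbf{1}_D]\cdot[\mathsf{B}(\mathfrak{u}^{\ast n})\mathbf{1}_C]$, and then needs two applications of Proposition \ref{prop.onside} (one of them in $L^2$) plus a Cauchy--Schwarz estimate to pass to the limit in a product of two convergent sequences. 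You instead average over the $G$-factor only and use the $\mathsf{A}$-invariance of $C$ exactly rather than asymptotically: pairing $\mathsf{A}(\mathfrak{w}^{\ast n})\mathbf{1}_D$ against the fixed test function $\mathbf{1}_C$ and invoking $\mathbf{1}_C = \mathbf{1}_C \circ \mathsf{A}^g$ together with measure preservation identifies the pairing as the constant $\mu(C\cap D)$ for every $n$, after which a single $L^1$ application of Proposition \ref{prop.onside} and the bound $||\mathbf{1}_C||_\infty \leq 1$ finish the job. Your version dispenses with the direct use of Corollary \ref{cor.oscc} at this stage, with the second invocation of Proposition \ref{prop.onside}, and with all of the $L^2$ bookkeeping; the paper's version yields the marginally stronger conclusion that the product of the two averaged indicators converges to $\mu(C)\mu(D)$ in $L^1$, which is not needed for the lemma. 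Your closing remark about which hypothesis does which job --- joint ergodicity driving the convergence, one-sided invariance making the pairing exact --- is precisely the right way to keep the argument honest.
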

 
In the remainder of Subsection \ref{sec.semred} we reduce the statement of Theorem \ref{thm.synerg} to Lemma \ref{lem.semi}. We begin this task by reviewing a `joining by common extension' construction which appears in Section 6.3 of \cite{MR1958753}. Let $G$ be a countable group, let $(X,\mu)$ be a standard probability space and let $\sf{A} \in \mrm{Act}(G,X,\mu)$. Let $\eusc{S}_1$ and $\eusc{S}_2$ be two $\sf{A}$-stable measurable $\sigma$-algebras on $X$. Then the factor of $\sf{A}$ corresponding to $\sigma(\eusc{S}_1,\eusc{S}_2)$ may be realized as a joining of the factors corresponding to $\eusc{S}_1$ and $\eusc{S}_2$ as follows. \\
\\
For $j \in \{1,2\}$ let $(Y_j,\nu_j)$ be a standard probability space and let $\sf{A}_j \in \mrm{Act}(G,Y_j,\nu_j)$ be a realization of $\eusc{S}_j$ with factor map $\Phi_j:X \to Y_j$. Writing $\Pi_j: Y_1 \times Y_2 \to Y_j$ for the Cartesian projection onto the $j^{\mrm{th}}$ factor, there exists a probability measure $\omega$ on $Y_1 \times Y_2$ and a map $\Phi: X \to Y_1 \times Y_2$ such that the following hold.

\begin{itemize} \item We have $(\Pi_j)_{\#} \omega = \nu_j$ for each $j \in \{1,2\}$. \item The measure $\omega$ is preserved by the diagonal action $\sf{A}_1 \parr \sf{A}_2$ of $G$ on $Y_1 \times Y_2$, and this diagonal action is a realization of the factor of $\sf{A}$ associated with $\sigma(\eusc{S}_1,\eusc{S}_2)$. \item The map $\Phi$ factors $\sf{A}$ onto $\sf{A}_1 \parr  \sf{A}_2$ in such a way that the following diagram commutes. \begin{equation} \label{eq.diagramm} \begin{tikzcd} && &X \arrow[dd, "\Phi" description] \arrow[dddrr, "\Phi_2" description] \arrow[dddll, "\Phi_1" description] &&& \\  \\ &&& Y_1 \times Y_2 \arrow[drr,"\Pi_2" description] \arrow[dll,"\Pi_1" description] &&&  \\  & Y_1 &&&& Y_2 & \end{tikzcd} \end{equation}  \end{itemize}

Now, consider two countable groups $H$ and $K$ along with an action $(\sf{B},\sf{C}) \in \mrm{Act}(H \times K,X,\mu)$. We take $G = H \times K$ and $\sf{A} = (\sf{B},\sf{C})$ throughout the above discussion. Let $\eusc{S}_1 = \eusc{E}_\sf{B}$ and $\eusc{S}_2 = \eusc{E}_\sf{C}$. Then we may assume $(Y_1,\nu_1) = (E_\sf{B},\eta_\sf{B})$ and: \[ \sf{A}_1 = (\sf{I}_{G,E_\sf{B}}, \sf{E}_{\sf{C} \curvearrowright \sf{B}}) \in \mrm{Act}(G \times H, E_\sf{B},\eta_\sf{B}) \] Similarly we may assume $(Y_2,\nu_2) = (E_\sf{C},\eta_\sf{C})$ and: \[ \sf{A}_2 = (\sf{E}_{\sf{B} \curvearrowright \sf{C}}, \sf{I}_{H,E_\sf{C}} ) \in \mrm{Act}(G \times H, E_\sf{C},\eta_\sf{C}) \] 

Taking into account Remark \ref{rem.loc}, we see that in order to prove Theorem \ref{thm.synerg} it suffices to show $\omega = \nu_1 \times \nu_2$. By commutativity of the diagram in (\ref{eq.diagramm}), for this it suffices to show that if $D_j \in \eusc{M}_{Y_j}$ for $j \in \{1,2\}$ then we have:  \[ \mu\left( \Phi_1^{-1}(D_1) \cap \Phi_2^{-1}(D_2) \right) = \mu(D_1) \mu(D_2) \] Since we chose $\eusc{S}_1 = \eusc{E}_\mathsf{B}$ and $\eusc{S}_2 = \eusc{E}_\mathsf{C}$, resetting variables by \[ H \mapsto G \qqquad K \mapsto H \qqquad \mathsf{B} \mapsto \mathsf{A} \qqquad \mathsf{C} \mapsto \mathsf{B}  \] we have successfully reduced the proof of Theorem \ref{thm.synerg} to Lemma \ref{lem.semi}.

\subsubsection{Proof of Lemma \ref{lem.semi}} \label{sec.semprf} 

Toward a proof of Lemma \ref{lem.semi}, fix countable groups $G$ and $H$ and a standard probability space $(X,\mu)$ along with an ergodic action $(\mathsf{A},\sf{B}) \in \mrm{Erg}(G \times H,X,\mu)$. Let $C \in \eusc{E}_\mathsf{A}$ and $D \in \eusc{E}_\mathsf{B}$, so we must show: \begin{equation} \mu(C \cap D) = \mu(C)\mu(D) \label{eq.want} \end{equation} For all $x \in X$, all $g \in G$ and all $h \in H$ we have: \begin{equation} \label{eq.split} \mathbf{1}_{C \cap D}(\mathsf{A}^g \mathsf{B}^hx)  = \mathbf{1}_C(\mathsf{A}^g \mathsf{B}^hx) \mathbf{1}_D(\mathsf{B}^h \mathsf{A}^g  x) = \mathbf{1}_C(\mathsf{B}^hx) \mathbf{1}_D(\mathsf{A}^g x)   \end{equation}  Here, the left equality holds since $\sf{A}^g$ and $\sf{B}^h$ commute while the right equality holds since $C$ and $D$ were assumed to be respectively $\mathsf{A}$ and $\mathsf{B}$ invariant. Now, let $\mathfrak{w}$ and $\mathfrak{u}$ be absolutely generating probability measures on $G$ and $H$ respectively. By applying Corollary \ref{cor.oscc} with $p = 1$ we find the limit below holds.  \begin{equation} \label{eq.gh-erg} \lim_{n \to \infty} ||\mu(C \cap D) - \mathsf{A}(\mathfrak{w}^{\ast n}) \mathsf{B}(\mathfrak{u}^{\ast n}) \mathbf{1}_{C \cap D}||_1 = 0 \end{equation}

On the other hand, for any $x \in X$ we may compute: \begin{align} [\mathsf{A}(\mathfrak{w}^{\ast n}) \mathsf{B}(\mathfrak{u}^{\ast n}) \mathbf{1}_{C \cap D}](x) & =  \sum_{(g,h) \in G \times H}  \mathfrak{w}^{\ast n}(g) \mathfrak{u}^{\ast n}(h)  \mathbf{1}_{C \cap D}(\mathsf{A}^g \mathsf{B}^hx) \label{eq.erg-5.9} \\ & = \sum_{(g,h) \in G \times H}  \mathfrak{w}^{\ast n}(g) \mathfrak{u}^{\ast n}(h)  \mathbf{1}_C(\mathsf{B}^hx) \mathbf{1}_D(\mathsf{A}^g x)  \label{eq.erg-6} \\   & = \Biggl(\sum_{g \in G}  \mathfrak{w}^{\ast n}(g) \mathbf{1}_D(\mathsf{A}^g x)  \Biggr)  \left( \sum_{h \in H} \mathfrak{u}^{\ast n}(h)  \mathbf{1}_C(\mathsf{B}^hx) \right) \label{eq.erg-7} \\ & = [\mathsf{A}(\mathfrak{w}^{\ast n}) \mathbf{1}_D](x) \cdot  [\mathsf{B}(\mathfrak{u}^{\ast n}) \mathbf{1}_C](x) \nonumber \end{align} Here, (\ref{eq.erg-6}) follows from (\ref{eq.erg-5.9}) by (\ref{eq.split}), while (\ref{eq.erg-7}) follows from (\ref{eq.erg-6}) by the discrete Fubini theorem and absolute convergence. Combining (\ref{eq.gh-erg}) with the previous display we find that the limit below holds. \begin{equation} \label{eq.ooo-9} \lim_{n \to \infty} ||\mu(C \cap D) - ([\mathsf{A}(\mathfrak{w}^{\ast n}) \mathbf{1}_D] \cdot  [\mathsf{B}(\mathfrak{u}^{\ast n}) \mathbf{1}_C]) ||_1 = 0  \end{equation} Since $C$ was assumed to be $\mathsf{A}$-invariant and $D$ was assumed to be $\mathsf{B}$-invariant, Proposition \ref{prop.onside} implies that the both of the following limits hold. \begin{equation} \lim_{n \to \infty} ||\mu(D) -  \mathsf{A}(\mathfrak{w}^{\ast n})\mathbf{1}_D||_2  = 0  \qqquad \lim_{n \to \infty} ||\mu(C) -  \mathsf{B}(\mathfrak{w}^{\ast n})\mathbf{1}_C||_1  = 0   \label{eq.ooo-6} \end{equation} We compute: \begin{align}  ||(\mu(D) - \mathsf{A}(\mathfrak{w}^{\ast n}) \mathbf{1}_D ) \cdot  ( \mathsf{B}(\mathfrak{u}^{\ast n})  \mathbf{1}_C)||_1   & \leq  ||\mu(D) - \mathsf{A}(\mathfrak{w}^{\ast n}) \mathbf{1}_D ||_2||\mathsf{B}(\mathfrak{u}^{\ast n})  \mathbf{1}_C||_2 \label{eq.ooo-2}  \\ & \leq ||\mu(D) - \mathsf{A}(\mathfrak{w}^{\ast n}) \mathbf{1}_D ||_2 \label{eq.ooo-3}   \end{align} Here, the inequality in (\ref{eq.ooo-2}) is Cauchy-Schwartz, while (\ref{eq.ooo-3}) follows from (\ref{eq.ooo-2}) since the operator $\mathsf{B}(\mathfrak{u}^{\ast n})$ contracts the $2$-norm and we have $||\mathbf{1}_C||_2 = \sqrt{\mu(C)} \leq 1$. We compute again: \begin{align*}  || \mu(C)\mu(D) -([\mathsf{A}(\mathfrak{w}^{\ast n}) \mathbf{1}_D] \cdot  [\mathsf{B}(\mathfrak{u}^{\ast n}) \mathbf{1}_C]) ||_1  &  \leq \mu(D) ||\mu(C) -  \mathsf{B}(\mathfrak{u}^{\ast n})\mathbf{1}_D||_1 \\ & \hspace{1 cm} + ||(\mu(D) - \mathsf{A}(\mathfrak{w}^{\ast n}) \mathbf{1}_D ) \cdot  ( \mathsf{B}(\mathfrak{u}^{\ast n})  \mathbf{1}_C)||_1  \\ &  \leq ||\mu(C) -  \mathsf{B}(\mathfrak{u}^{\ast n})\mathbf{1}_D||_1 + ||\mu(D) - \mathsf{A}(\mathfrak{w}^{\ast n}) \mathbf{1}_D ||_2   \end{align*} Here, the last equality follows from (\ref{eq.ooo-3}). Combining the previous display with (\ref{eq.ooo-6}), we find \[ \lim_{n \to \infty}  || \mu(C)\mu(D) -([\mathsf{A}(\mathfrak{w}^{\ast n}) \mathbf{1}_D] \cdot  [\mathsf{B}(\mathfrak{u}^{\ast n}) \mathbf{1}_C]) ||_1 = 0 \] The desired equality (\ref{eq.want}) now follows by now follows by combining the previous display with (\ref{eq.ooo-9}). This completes the proof of Lemma \ref{lem.semi}. 

\subsection{Proof of Theorem \ref{thm.locprod}}

Let $G$ and $H$ be countable groups, let $(X,\mu)$ and $(Y,\nu)$ be standard probability spaces and let $\sf{A} \in \mrm{Erg}(G,X,\mu)$ and $\sf{B} \in \mrm{Erg}(H,Y,\nu)$ be ergodic actions. Consider the local product $\sf{A} \sqq \sf{B} \in \mrm{Act}(G \times H, X \times Y,\mu \times \nu)$. We claim that in order to prove Theorem \ref{thm.locprod} it suffices to show the final statement in its conclusion: \begin{equation} \label{eq.oo} \eusc{E}_{\sf{A}_\square} = \eusc{N}_X \times \eusc{M}_Y \qqquad \qqquad \eusc{E}_{{}_\square \sf{B}} = \eusc{M}_X \times \eusc{N}_Y \end{equation} Indeed, if we assume the above equality holds then we find \[ \eusc{E}_{\sf{A} \sqq \sf{B}} \subseteq \eusc{E}_{\sf{A}_\square} \cap \eusc{E}_{{}_\square \sf{B}} = (\eusc{N}_X \times \eusc{M}_Y) \cap (\eusc{M}_X \times \eusc{N}_Y) = \eusc{N}_{X \times Y} \] and therefore $\sf{A} \sqq \sf{B}$ is ergodic. Similarly, we have \[ \sigma(\eusc{E}_{\sf{A}_\square}, \eusc{E}_{{}_\square \sf{B}}) = \sigma\bigl(\eusc{N}_X \times \eusc{M}_Y , \eusc{M}_X \times \eusc{N}_Y \bigr) = \eusc{M}_{X \times Y} \] so the synergodic decomposition of $(\sf{A},\sf{B})$ is an isomorphism. We now turn to a proof of the left equality in (\ref{eq.oo}). It follows immediately from the construction of the local product that $\eusc{N}_X \times \eusc{M}_Y \subseteq \eusc{E}_{\sf{A}_\square}$. Thus in order to verify the left equality in (\ref{eq.oo}) it suffices to consider a nonempty set $D \in \eusc{E}_{\sf{A}_\square}$ and show there exists a set $C \in \eusc{M}_Y$ such that: \begin{equation} (\mu \times \nu)\bigl(D \triangle (X \times C)\bigr) = 0 \label{eq.fub} \end{equation} First observe that for any $(x,y) \in X \times Y$, our assumption that $D$ is $\sf{A}_\square$-invariant implies \begin{equation} \label{eq.square} (x,y) \in D \iff (\sf{A}^gx,y) \in D \end{equation} for all $g \in G$. For $y \in Y$ we adopt the notation: \[ D_y = \{x \in X: (x,y) \in D\} \] By combining this definition with (\ref{eq.square}) we find that $D_y$ is an $\sf{A}$-invariant subset of $X$ for all $y \in Y$. Since we have assumed the action $\sf{A}$ is ergodic, it follows that $\mu(D_y) \in \{0,1\}$ for all $y \in Y$. Thus we can take \[ C= \{y \in Y:\mu(D_y) =1 \} \] to satisfy (\ref{eq.fub}). This shows that $\eusc{E}_{\sf{A}_\square}= \eusc{N}_X \times \eusc{M}_Y$. A symmetrical argument shows that $\eusc{E}_{{}_\square \sf{B}} = \eusc{M}_X \times \eusc{N}_Y$ and so the proof of Theorem \ref{thm.locprod} is complete.

\section{Proof of Theorem \ref{thm.crosserg}}

\subsection{Reduction to Lemma \ref{lem.prodd}}

If $X$ is a standard probability space, we write $\mathbb{E}_X$ for the integration operator on $X$. The following lemma will be proved in Subsection \ref{sec.proddprf} below.

\begin{lemma} \label{lem.prodd} Let $G$ and $H$ be countable groups, let $(X,\mu)$ be a standard probability space and let $(\sf{A},\sf{B}) \in \mrm{Erg}(G \times H,X,\mu)$. Then we have $\mathbb{I}_\sf{B} \mathbb{I}_{\sf{A}} = \mathbb{E}_X$. \end{lemma}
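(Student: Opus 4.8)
The plan is to deduce the identity $\mathbb{I}_\sf{B}\mathbb{I}_\sf{A} = \mathbb{E}_X$ directly from the statistical independence of $\eusc{E}_\sf{A}$ and $\eusc{E}_\sf{B}$ furnished by Lemma \ref{lem.semi}. The conceptual point is that independence of two $\sigma$-algebras forces the conditional expectation onto one of them to annihilate the mean-zero part of anything measurable with respect to the other, so that $\mathbb{I}_\sf{B}$ collapses any $\eusc{E}_\sf{A}$-measurable function to its integral. Since $\mathbb{I}_\sf{A}\psi$ is by definition $\eusc{E}_\sf{A}$-measurable and has the same integral as $\psi$, applying $\mathbb{I}_\sf{B}$ then returns the constant $\mathbb{E}_X\psi$.

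First I would record the reduction. Because $\mathbb{I}_\sf{A}\psi$ is $\eusc{E}_\sf{A}$-measurable and conditional expectation preserves the integral, so that $\mathbb{E}_X(\mathbb{I}_\sf{A}\psi) = \mathbb{E}_X\psi$, it suffices to prove that every $\eusc{E}_\sf{A}$-measurable function $f$ satisfies $\mathbb{I}_\sf{B} f = \mathbb{E}_X f$. Granting this we immediately obtain $\mathbb{I}_\sf{B}\mathbb{I}_\sf{A}\psi = \mathbb{E}_X(\mathbb{I}_\sf{A}\psi) = \mathbb{E}_X\psi$ for every $\psi$, which is the assertion of the lemma.

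To prove the reduced claim I would invoke the defining property of conditional expectation: $\mathbb{I}_\sf{B} f$ is the unique $\eusc{E}_\sf{B}$-measurable function $g$ with $\int_D g\, d\mu = \int_D f\, d\mu$ for all $D \in \eusc{E}_\sf{B}$. The constant $\mathbb{E}_X f$ is trivially $\eusc{E}_\sf{B}$-measurable, so it is enough to verify $\int_D f\, d\mu = \mu(D)\,\mathbb{E}_X f$ for every $D \in \eusc{E}_\sf{B}$. By linearity this reduces to the case $f = \mathbf{1}_C$ with $C \in \eusc{E}_\sf{A}$, where the required equality becomes $\mu(C \cap D) = \mu(C)\mu(D)$, which is exactly the conclusion of Lemma \ref{lem.semi}.

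I expect the only obstacle to be bookkeeping rather than substance, since the genuine content lives entirely in Lemma \ref{lem.semi} and the remaining steps are a routine collapse of a conditional expectation under independence. The two points deserving care are the passage from the indicator case to an arbitrary $\eusc{E}_\sf{A}$-measurable $f$, which I would handle either by a monotone class argument or by density of simple functions in $L^2(\eusc{E}_\sf{A})$, and the specification of the operators' domain. It is cleanest to establish the identity on $L^2$, where $\mathbb{I}_\sf{A}$ and $\mathbb{I}_\sf{B}$ are orthogonal projections and Lemma \ref{lem.semi} says precisely that the mean-zero subspaces of $L^2(\eusc{E}_\sf{A})$ and $L^2(\eusc{E}_\sf{B})$ are orthogonal; the identity then extends to the other $L^p$ by density of bounded functions and the fact that both operators contract every $p$-norm.
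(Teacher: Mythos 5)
Your argument is correct, and it takes a genuinely different and shorter route than the paper. The paper proves this lemma by first invoking Theorem \ref{thm.synerg} to realize the synergodic decomposition as the local product $\mathsf{E}_1 \sqq \mathsf{E}_2$, then applying the Rokhlin skew-product theorem (Theorem \ref{thm.skew}) to present $(X,\mu)$ as a three-fold product $E_1 \times E_2 \times Y$ on which $\eusc{E}_\mathsf{A}$ and $\eusc{E}_\mathsf{B}$ become the coordinate $\sigma$-algebras $\eusc{M}_1 \times \eusc{N}_2 \times \eusc{N}_Y$ and $\eusc{N}_1 \times \eusc{M}_2 \times \eusc{N}_Y$, and finally deducing $\mathbb{I}_\mathsf{B}\mathbb{I}_\mathsf{A} = \mathbb{E}_X$ from the explicit disintegration computation of Proposition \ref{prop.stat}. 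You bypass the skew-product machinery entirely: from Lemma \ref{lem.semi} (statistical independence of $\eusc{E}_\mathsf{A}$ and $\eusc{E}_\mathsf{B}$) you extract the standard probabilistic fact that conditioning an $\eusc{E}_\mathsf{A}$-measurable function on the independent $\sigma$-algebra $\eusc{E}_\mathsf{B}$ returns its mean, and then conclude via the tower property $\mathbb{E}_X(\mathbb{I}_\mathsf{A}\psi) = \mathbb{E}_X\psi$. The verification against test sets $D \in \eusc{E}_\mathsf{B}$, the reduction to indicators $\mathbf{1}_C$ with $C \in \eusc{E}_\mathsf{A}$, and the extension to general $f$ by simple-function approximation are all routine and correctly flagged, as is the remark that on $L^2$ the content is exactly orthogonality of the two mean-zero subspaces, with the other $L^p$ handled by density and contractivity. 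Both proofs ultimately rest on Lemma \ref{lem.semi}, since the paper reduces Theorem \ref{thm.synerg} to that lemma; what your version buys is economy (no appeal to Theorem \ref{thm.skew} or Proposition \ref{prop.stat}), while the paper's version buys an explicit geometric picture of $X$ as a fibered product over $E_1 \times E_2$, which is more in the structural spirit of the article but is not logically necessary for this particular identity.
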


We now reduce Theorem \ref{thm.crosserg} to Lemma \ref{lem.prodd}. Let $p \in [1,\infty]$, let $G$ and $H$ be countable groups, let $(X,\mu)$ be a standard probability space and let $\sf{A} \in \mrm{Act}(G,X,\mu)$. Also let $\mathsf{B} = \mathsf{I}_H^X$ be the trivial action of $H$ on $(X,\mu)$. Then for any $\psi \in L^p(X,\mu)$ and any pair $\mathfrak{w}$ and $\mathfrak{u}$ of probability measures on $G$ and $H$ respectively we have:  \[ \sum_{(g,h) \in G \times H} \mathfrak{w}(g) \mathfrak{u}(h) \psi(\mathsf{B}^h \mathsf{A}^g x) = \sum_{(g,h) \in G \times H} \mathfrak{w}(g) \mathfrak{u}(h) \psi(\mathsf{A}^g x) = \sum_{g \in G} \mathfrak{w}(g)\psi(\mathsf{A}^g x)  \] Here, the left inequality holds since the action of $\mathsf{B}^h$ is trivial, while the right inequality holds since $\mathfrak{u}$ is a probability measure. This shows that the ergodic averages whose convergence is asserted by Clause (a) of Theorem \ref{thm.crosserg} are special cases of those whose convergence is asserted by Clause (b), and so Clause (b) implies Clause (a). Thus we turn to the proof that Clause (a) implies Clause (b).\\
\\
To this end, we now let $(\sf{A},\sf{B}) \in \mrm{Erg}(G \times H,X,\mu)$ be ergodic and let $p \in [1,\infty]$. Also let $(\mathfrak{w}_n)_{n \in \mathbb{N}}$ and $(\mathfrak{u}_n)_{n \in \mathbb{N}}$ be $p$-mean ergodic sequences of probability measures on $G$ and $H$ respectively. Let $\psi \in L^p(X,\mu)$ and let $\epsilon > 0$. Using the definition of $p$-mean ergodicity for $(\mathfrak{w}_n)_{n \in \mathbb{N}}$, we see there exists $N \in \mathbb{N}$ such that for all $n \geq N$ we have: \[ ||(\sf{A}(\mathfrak{w}_n)-\mathbb{I}_\sf{A}) \psi||_p \leq \frac{\epsilon}{2}  \] For any $m \in \mathbb{N}$ the operator $\sf{B}(\mathfrak{u}_m)$ contracts the $p$-norm and so it follows from the above display that if $n \geq N$ then: \[ ||\mathsf{B}(\mathfrak{u}_m)(\sf{A}(\mathfrak{w}_n)-\mathbb{I}_\sf{A}) \psi||_p \leq \frac{\epsilon}{2} \]

Moreover, we have $\mathbb{I}_\sf{A}\psi \in L^p(X,\mu)$ and so using the definition of $p$-mean ergodicity for $(\mathfrak{u}_n)_{n \in \mathbb{N}}$ we see there  there exists $M \in \mathbb{N}$ such that for all $m \geq M$ we have: \[ ||(\sf{B}(\mathfrak{u}_m)-\mathbb{I}_\sf{B}) \mathbb{I}_\sf{A}\psi||_p \leq \frac{\epsilon}{2}  \] From the last two displays we see that if $n \geq N$ and $m \geq M$ then: \[ ||(\sf{B}(\mathfrak{u}_m)\mathsf{A}(\mathfrak{w}_n) - \mathbb{I}_\sf{B}\mathbb{I}_{\sf{A}}) \psi ||_p \leq ||(\sf{B}(\mathfrak{u}_m)-\mathbb{I}_\sf{B}) \mathbb{I}_\sf{A}\psi||_p+||\mathsf{B}(\mathfrak{u}_m)(\sf{A}(\mathfrak{w}_n)-\mathbb{I}_\sf{A}) \psi||_p \leq \epsilon \] Thus if $\ell \in \mathbb{N}$ satisfies $\ell \geq \max(N,M)$ then the previous display implies: \[ \nml ( \sf{B}(\mathfrak{u}_{\ell}) \sf{A}(\mathfrak{w}_{\ell})  - \mathbb{I}_\sf{B}\mathbb{I}_{\sf{A}}) \psi \nmr_p \leq \epsilon  \] Thus assuming Lemma \ref{lem.prodd} we have: \[ \lim_{n \to \infty} \nml ( \sf{B}(\mathfrak{u}_{\ell}) \sf{A}(\mathfrak{w}_{\ell})  - \mathbb{E}_X) \psi \nmr_p = 0\] Our assumption that $(\sf{A},\sf{B})$ is ergodic implies $\mathbb{E}_X = \mathbb{I}_{\sf{A},\sf{B}}$ and so Theorem \ref{thm.crosserg} is established for ergodic actions. The general case follows as usual by considering the ergodic decomposition of an arbitrary action $(\sf{A},\sf{B}) \in \mrm{Act}(G \times H,X,\mu)$.

\subsection{Conditional expectations on cubes} \label{sec.cube}

Subsection \ref{sec.cube} serves to prove the following elementary measure-theoretic statement.

\begin{proposition}\label{prop.stat} For $j \in \{1,2,3\}$ let $(X_j,\mu_j)$ be a standard probability space and let: \[ (Y,\nu) = (X_1 \times X_2 \times X_3,\mu_1 \times \mu_2 \times \mu_3)  \] Also let $\Pi_j: Y \to X_j$ be the Cartesian projection and let $\eusc{P}_j$ be the measurable $\sigma$-algebra on $(Y,\nu)$ given by $(\Pi_j)_{\uparrow}[\eusc{M}_{X_j}]$. Also let $\mathbb{P}_j$ be the conditional expectation from $L^1(Y,\nu)$ onto $\eusc{P}_j$. Then for any distinct pair $j,k \in \{1,2,3\}$ we have $\mathbb{P}_j \mathbb{P}_k = \mathbb{E}_Y$. \end{proposition}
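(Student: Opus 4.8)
The plan is to give an explicit Fubini description of each conditional expectation $\mathbb{P}_j$ and then compose. Fix distinct $j,k \in \{1,2,3\}$ and write $\{k,\ell\} = \{1,2,3\} \setminus \{j\}$ for the two coordinates other than $j$. For $f \in L^1(Y,\nu)$ I would introduce a candidate operator $\mathbb{Q}_j$ obtained by integrating out the two complementary coordinates, so that $\mathbb{Q}_j f$ is the function whose value depends only on the $X_j$-coordinate of its argument and is computed by integrating $f$ against $\mu_k \times \mu_\ell$ in the remaining two variables. The first step is to verify that $\mathbb{Q}_j$ is exactly the conditional expectation $\mathbb{P}_j$. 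This amounts to checking the two defining properties of conditional expectation: that $\mathbb{Q}_j f$ is $\eusc{P}_j$-measurable, which is immediate since $\mathbb{Q}_j f$ factors through $\Pi_j$ by construction; and that $\int_E \mathbb{Q}_j f \, d\nu = \int_E f \, d\nu$ for every $E \in \eusc{P}_j$, which follows from the Fubini--Tonelli theorem applied to cylinder sets $E = \Pi_j^{-1}(A)$ with $A \in \eusc{M}_{X_j}$.

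With this identification in hand, the composition is immediate. Applying $\mathbb{P}_k$ first yields a function $\mathbb{P}_k f$ that depends only on the $X_k$-coordinate. Since $j \neq k$, the coordinate $X_k$ is one of the two variables integrated out by $\mathbb{P}_j$, so $\mathbb{P}_j(\mathbb{P}_k f)$ is the constant function equal to $\int_Y \mathbb{P}_k f \, d\nu$. Because conditional expectation preserves the total integral, $\int_Y \mathbb{P}_k f \, d\nu = \int_Y f \, d\nu$, and this constant is precisely $\mathbb{E}_Y f$. Hence $\mathbb{P}_j \mathbb{P}_k = \mathbb{E}_Y$.

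The only step requiring any care is the identification of $\mathbb{P}_j$ with partial integration, and even there the substance is just the routine Fubini verification of the averaging identity over the cylinder sets $\Pi_j^{-1}(A)$; everything else is formal. An alternative route avoiding the explicit formula is to observe that $\eusc{P}_j$ and $\eusc{P}_k$ are statistically independent in the sense of Definition \ref{def.statindep}, a direct consequence of the product structure of $\nu$, and then to invoke the standard fact that when two $\sigma$-algebras are statistically independent, the conditional expectation onto one of any integrable function measurable with respect to the other is the constant equal to that function's mean; applied to the $\eusc{P}_k$-measurable function $\mathbb{P}_k f$ this gives $\mathbb{P}_j \mathbb{P}_k f = \mathbb{E}_Y(\mathbb{P}_k f) = \mathbb{E}_Y f$ at once. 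If one prefers to minimize measure-theoretic overhead, one can instead verify the identity on the linear span of simple tensors $f_1 \otimes f_2 \otimes f_3$ with $f_i \in L^\infty(X_i,\mu_i)$, where the computation is transparent, and then extend to all of $L^1(Y,\nu)$ by density, using that $\mathbb{P}_j$, $\mathbb{P}_k$ and $\mathbb{E}_Y$ are all $L^1$-contractions.
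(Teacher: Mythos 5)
Your proposal is correct and follows essentially the same route as the paper: both arguments rest on the explicit Fubini description of $\mathbb{P}_j$ as integration over the complementary coordinates (which the paper cites as a disintegration formula from a reference and you verify directly on cylinder sets), followed by the observation that applying this to a function in the range of $\mathbb{P}_k$ yields the constant $\mathbb{E}_Y f$. The alternative arguments you sketch (statistical independence, density of simple tensors) are also valid but not needed.
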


\begin{proof}[Proof of Proposition \ref{prop.stat}] Without loss of generality we may assume that $j = 1$ and $k = 2$. Let $\mathcal{V}$ denote the closed subspace of $L^1(Y,\nu)$ defined as follows. \[ \mathcal{V} = \bigl \{ \psi \in L^1(Y,\nu): \psi(x_1,x_2,x_3) = \psi(y_1,y_2,y_3) \mbox{ if }x_2 = y_2 \bigr\} \] Thus $\mathcal{V}$ is the subspace of $\eusc{P}_2$-measurable functions, or equivalently $\mathcal{V}$ is the range of $\mathbb{P}_2$. Thus it suffices to verify that if $\psi \in \mathcal{V}$ then $\mathbb{P}_1 \psi = \mathbb{E}_Y \psi$.  Since the value of a function $\psi \in \mathcal{V}$ does not depend on the first coordinate, for any $(x_1,x_2,x_3) \in Y$ we have: \[ \psi(x_1,x_2,x_3) = \int_{X_1} \psi(x,x_2,x_3) \deee \mu_1(x) \] According to Item 9 in Theorem A.8 of \cite{MR1958753} the following disintegration formula holds for all $\psi \in L^1(Y,\nu)$. \[ [\mathbb{P}_1 \psi](x_1,x_2,x_3) = \int_{X_3} \int_{X_2} \psi(x_1,y,z) \deee \mu_2(y) \deee \mu_3(z) \] Combining the last two displays we find that if $\psi \in \mathcal{V}$ then \[  [\mathbb{P}_1 \psi](x_1,x_2,x_3) = \int_{X_3} \int_{X_2} \int_{X_1} \psi(x,y,z)  \deee \mu_1(x)  \deee \mu_2(y)  \deee \mu_3(z) \] as required. \end{proof}

\subsection{Rokhlin skew-product theorem} \label{sec.rokh}

In Subsection \ref{sec.rokh} we recall the `Rohklin skew-product theorem' which gives an more explicit version of Theorem \ref{thm.factor}. Let $G$ be a countable discrete group and let $\sf{A} \in \mrm{Act}(G,X,\mu)$. Given a Polish group $\Xi$, we define a \textbf{cocycle of }$\mathsf{A}$\textbf{ with values in }$\Xi$ to be a measurable map $\alpha:G \times X \to \Xi$ which satisfies the cocycle equation \[ \alpha(gh,x) = \alpha(g,hx)\alpha(h,x) \] for all $g,h \in G$ and $\mu$-almost every $x \in X$. Here, measurability of $\alpha$ is understood to mean that for every Borel subset $K$ of $\Xi$ the preimage $\alpha^{-1}(K)$ lies in the $\sigma$-algebra on $G \times X$ generated by all sets of the form $\{g\} \times D$ for $ g \in G$ and $D \in \eusc{M}_X$. If $\Xi = \mrm{Aut}(Y,\nu)$ for a standard probability space $(Y,\nu)$ we may construct the \textbf{skew-product extension of }$\sf{A}$\textbf{ by }$\alpha$, which is an element $\sf{A} \times_\alpha Y$ of $\mrm{Act}(G,X \times Y,\mu \times \nu)$ defined by letting $g \in G$ act according to the formula $(x,y) \mapsto (\sf{A}^gx, \alpha(g,x)y)$.

\begin{theorem} \label{thm.skew} Let $G$ be a countable group, let $(X,\mu)$ be a standard probability space and let $\sf{A} \in \mrm{Erg}(G,X,\mu)$ be an ergodic action. Then any factor $\sf{B} \in \mrm{Act}(G,Y,\nu)$ of $\sf{A}$ is isomorphic to a skew-product extension. More explicitly, there exists a standard probability space $(Z,\omega)$ and a measurable cocycle $\alpha: G \times Y \to \mrm{Aut}(Z,\omega)$ such that $\sf{A}$ is isomorphic to $\sf{B} \times_\alpha Z \in \mrm{Act}(G,Y \times Z,\nu \times \omega)$.\end{theorem}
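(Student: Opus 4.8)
The plan is to build the fiber space $(Z,\omega)$ and the cocycle $\alpha$ by disintegrating $\mu$ over the factor map and then trivializing the resulting fibration, with ergodicity supplying a single model fiber. Fix a factor map $\Phi:X \twoheadrightarrow Y$ realizing $\sf{B}$ as a factor of $\sf{A}$, so that $\Phi_{\#}\mu = \nu$ and $\Phi \circ \sf{A}^g = \sf{B}^g \circ \Phi$ for all $g \in G$. First I would record that $\sf{B}$ is itself ergodic: any $\sf{B}$-invariant $D \in \eusc{M}_Y$ pulls back to the $\sf{A}$-invariant set $\Phi^{-1}(D)$, whose measure lies in $\{0,1\}$ because $\sf{A}$ is ergodic. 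Next, since $(X,\mu)$ and $(Y,\nu)$ are standard, I would invoke the Rokhlin disintegration of $\mu$ along $\Phi$ to obtain a $\nu$-measurable family $(\mu_y)_{y \in Y}$ of probability measures on $X$ with $\mu_y(\Phi^{-1}(y)) = 1$ and $\mu = \int_Y \mu_y \, \dee\nu(y)$.

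The heart of the argument is to show that this fibration is measurably trivial. Because $\Phi \circ \sf{A}^g = \sf{B}^g \circ \Phi$, the transformation $\sf{A}^g$ carries $\Phi^{-1}(y)$ onto $\Phi^{-1}(\sf{B}^g y)$, and by $\sf{A}^g$-invariance of $\mu$ together with uniqueness of the disintegration we get $(\sf{A}^g)_{\#}\mu_y = \mu_{\sf{B}^g y}$ for $\nu$-almost every $y$; thus $\sf{A}^g$ restricts to a measure-space isomorphism $(\Phi^{-1}(y),\mu_y) \to (\Phi^{-1}(\sf{B}^g y),\mu_{\sf{B}^g y})$. The isomorphism type of a standard probability space is a complete invariant given by its sorted list of atom masses together with the mass of its continuous part; each of these is a measurable function of $y$ and, by the previous sentence, is $\sf{B}$-invariant. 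Since $\sf{B}$ is ergodic, every such invariant is $\nu$-almost everywhere constant, so there is a single standard probability space $(Z,\omega)$ with $(\Phi^{-1}(y),\mu_y) \cong (Z,\omega)$ for $\nu$-almost every $y$. I would then apply measurable selection for standard Borel spaces to choose isomorphisms $\theta_y:(\Phi^{-1}(y),\mu_y) \to (Z,\omega)$ depending measurably on $y$, yielding a measure isomorphism $\Theta:X \to Y \times Z$, $\Theta(x) = (\Phi(x),\theta_{\Phi(x)}(x))$, with $\Theta_{\#}\mu = \nu \times \omega$ and $\Pi_Y \circ \Theta = \Phi$. This trivialization is where I expect the real work to lie: producing the common model $(Z,\omega)$ is clean, but selecting the $\theta_y$ measurably is the substantive content of Rokhlin's theory of Lebesgue spaces.

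Finally I would transport the action through $\Theta$ and read off the cocycle. Setting $\tilde{\sf{A}}^g = \Theta \circ \sf{A}^g \circ \Theta^{-1} \in \mrm{Aut}(Y \times Z,\nu\times\omega)$, the relation $\Pi_Y \circ \Theta = \Phi$ combined with $\Phi \circ \sf{A}^g = \sf{B}^g \circ \Phi$ forces the first coordinate of $\tilde{\sf{A}}^g(y,z)$ to be $\sf{B}^g y$, so $\tilde{\sf{A}}^g(y,z) = (\sf{B}^g y,\alpha(g,y)z)$ for some map $\alpha(g,y):Z \to Z$. Since $\sf{A}^g$ restricts to the measure isomorphism $\mu_y \to \mu_{\sf{B}^g y}$ and these fibers are trivialized by $\theta$, each $\alpha(g,y)$ lies in $\mrm{Aut}(Z,\omega)$, and measurability of $\Theta$ and of the disintegration delivers measurability of $\alpha$ in the sense required. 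The homomorphism identity $\tilde{\sf{A}}^{gh} = \tilde{\sf{A}}^g\tilde{\sf{A}}^h$ then gives $\alpha(gh,y)z = \alpha(g,\sf{B}^h y)\alpha(h,y)z$, i.e. the cocycle equation $\alpha(gh,y) = \alpha(g,\sf{B}^h y)\alpha(h,y)$ for the base action $\sf{B}$. Hence $\Theta$ is an isomorphism from $\sf{A}$ onto the skew-product extension $\sf{B} \times_\alpha Z$, which is the assertion of the theorem.
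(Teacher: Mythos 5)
The paper does not prove Theorem \ref{thm.skew}: it is recalled in Subsection \ref{sec.rokh} as the classical Rokhlin skew-product theorem and used as a black box, so there is no in-paper argument to compare against. Your proof is the standard one and is correct: disintegrate $\mu$ over the factor map, use uniqueness of the disintegration to get $(\mathsf{A}^g)_{\#}\mu_y = \mu_{\mathsf{B}^g y}$, use ergodicity of the base (inherited from ergodicity of $\mathsf{A}$) to make the isomorphism type of the fibers almost everywhere constant via the atom-mass classification of Lebesgue spaces, select the trivializing isomorphisms $\theta_y$ measurably, and read off the cocycle $\alpha(g,y) = \theta_{\mathsf{B}^g y} \circ \mathsf{A}^g \circ \theta_y^{-1}$, whose cocycle identity over the base action follows from the homomorphism property. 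You correctly identify the measurable selection of the $\theta_y$ as the one step that genuinely requires Rokhlin's theory (or a Borel selection theorem applied to the standard Borel space of fiber isomorphisms); flagging rather than proving it is reasonable for a recalled classical result, and the derived cocycle relation $\alpha(gh,y)=\alpha(g,\mathsf{B}^h y)\alpha(h,y)$ matches the convention the paper sets out in Subsection \ref{sec.rokh}.
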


\subsection{Proof of Lemma \ref{lem.prodd}} \label{sec.proddprf}

Fix countable groups $G_1$ and $G_2$ along with a standard probability space $(X,\mu)$ and an action $(\sf{A}_1,\sf{A}_2) \in \mrm{Erg}(G,X,\mu)$. For $j \in \{1,2\}$ we simplify notation by writing $\eusc{E}_j$ for the $\sf{A}_j$-invariant $\sigma$-algebra $\eusc{E}_{\sf{A}_j}$ and $\mathbb{I}_j$ for the $\sf{A}_j$-invariant expectation $\mathbb{I}_{\sf{A}_j}$. Similarly, we write $(E_j,\eta_j)$ for the space $(E_{\mathsf{A}_j},\eta_{\mathsf{A}_j})$ of ergodic components of $\mathsf{A}_j$ and write $\mathsf{E}_j$ for the action $\mathsf{E}_{\mathsf{A}_j \curvearrowright \mathsf{A}_{1-j}}$. Also write $\eusc{M}_j$ and $\eusc{N}_j$ respectively for the full and trivial measurable $\sigma$-algebras on $E_j$. With these notations, our goal is to show that $\mathbb{I}_1 \mathbb{I}_2 = \mathbb{E}_X$. \\
\\
Using Theorem \ref{thm.synerg} we obtain a realization of the synergodic decomposition of $(\sf{A}_1,\sf{A}_2)$ as the local product $\sf{E}_1 \sqq  \sf{E}_2 \in \mrm{Act}(G_1 \times G_2 , E_1 \times E_2, \eta_1 \times \eta_2)$. Letting $\Phi:X \to E_1 \times E_2$ be the associated factor map, by construction we have: \begin{equation} \label{eq.eff-2} \eusc{E}_1 = \Phi_\uparrow[\eusc{M}_1 \times \eusc{N}_2] \qqquad  \qqquad \eusc{E}_2 = \Phi_\uparrow[\eusc{N}_1 \times \eusc{M}_2] \end{equation} Using the assumed ergodicity of $(\sf{A}_1,\sf{A}_2)$, Theorem \ref{thm.skew} provides a standard probability space $(Y,\nu)$ and a measurable cocycle $\alpha: G \times (E_1 \times E_2) \to \mrm{Aut}(Y,\nu)$ such that $\sf{A}$ is isomorphic to the action: \[ (\sf{E}_1 \sqq  \sf{E}_2)  \times_\alpha Z \in \mrm{Act}(G_1 \times G_2,E_1 \times E_2 \times Y,\eta_1 \times \eta_2 \times \nu) \] Moreover, the factor map $\Phi$ is implemented as the projection $(x_1,x_2,y) \mapsto (x_1,x_2)$ for $(x_1,x_2,y) \in E_1 \times E_2 \times Y$. Combining the last statement with (\ref{eq.eff-2}) we find $\eusc{E}_1 = \eusc{M}_1 \times \eusc{N}_2 \times \eusc{N}_Z$ and $\eusc{E}_2 = \eusc{N}_1 \times \eusc{M}_2 \times \eusc{N}_Z$. Thus Lemma \ref{lem.prodd} follows from Proposition \ref{prop.stat}.

\section{Proof of Theorem \ref{thm.amenable}}

\subsection{Proof of Clause (a) in Theorem \ref{thm.amenable}}

For a countable set $S$, a function $\psi:S \to \mathbb{C}$ and $z \in \mathbb{C}$ we define the notation $\lim_{s \to \infty} \psi(s) = z$ to mean that for every $\epsilon > 0$ there exists a finite subset $F$ of $S$ such that $|\psi(s) - z| \leq \epsilon$ for all $s \in S \setminus F$. We recall that a p.m.p. action $\mathsf{A} \in \mrm{Act}(G,X,\mu)$ of a countable group $G$ is defined to be \textbf{strongly mixing} if  the following holds for all $C,D \in \eusc{M}_X$. \[ \lim_{g \to \infty} \mu ( \mathsf{A}^g C \cap D ) = \mu(C)\mu(D) \] We also have the following fact, which follows from Theorem 3.11 in \cite{MR1958753}. (In fact a related notion of `weak mixing' suffices for Proposition \ref{thm.mixing}, but we deal with strong mixing as the definition is much simpler and the stronger condition will be satisfied in the cases of interest to us.)

\begin{proposition} \label{thm.mixing} Let $G$ be a countable group  and let $(X,\mu)$ and $(Y,\nu)$ be standard probability spaces. Also let $\mathsf{A} \in \mrm{Act}(G,X,\mu)$ and $\mathsf{B} \in \mrm{Act}(G,Y,\nu)$ be such that $\mathsf{A}$ is strongly mixing and $\mathsf{B}$ is ergodic. Then the diagonal action $\mathsf{A} \parr \mathsf{B} \in \mrm{Act}(G,X \times Y,\mu \times \nu)$ is ergodic. \end{proposition}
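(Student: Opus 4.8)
The plan is to work at the level of the Koopman representation on $L^2$ and to show that every function on $X \times Y$ that is invariant under $\mathsf{A} \parr \mathsf{B}$ is constant, which is equivalent to ergodicity of the diagonal action. Write $U^g$ for the operator $\Psi \mapsto \Psi \circ (\mathsf{A}\parr\mathsf{B})^g$ on $L^2(X\times Y,\mu\times\nu)$, and decompose
\[ L^2(X\times Y,\mu\times\nu) = \bigl(\mathbf{1}_X \otimes L^2(Y,\nu)\bigr) \oplus \bigl(L^2_0(X,\mu)\otimes L^2(Y,\nu)\bigr), \]
where $L^2_0(X,\mu)$ is the subspace of functions of $\mu$-mean zero and the first summand consists of functions depending only on the $Y$-coordinate. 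Since $\mathsf{A}^g$ preserves $\mu$, it preserves both the constants and $L^2_0(X,\mu)$, so each summand is $U^g$-invariant; as they are orthogonal, any $U^g$-fixed $F$ splits as $F = F_0 + F_1$ with $F_0$ and $F_1$ separately fixed. First I would dispose of $F_0$: writing $F_0 = \mathbf{1}_X\otimes \rho$ with $\rho \in L^2(Y,\nu)$, invariance forces $\rho\circ\mathsf{B}^g = \rho$ for every $g\in G$, so ergodicity of $\mathsf{B}$ makes $\rho$, and hence $F_0$, constant.

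The substance of the argument is to show $F_1 = 0$ using strong mixing of $\mathsf{A}$. Because $F_1$ is $U^g$-fixed, $\|F_1\|^2 = \langle U^g F_1, F_1\rangle$ for every $g$, so it suffices to prove $\langle U^g F_1, F_1\rangle \to 0$ as $g\to\infty$. For elementary tensors $\phi\otimes\psi$ and $\phi'\otimes\psi'$ with $\int_X \phi\,\dee\mu = 0$ one computes
\[ \langle U^g(\phi\otimes\psi),\, \phi'\otimes\psi'\rangle = \langle \phi\circ\mathsf{A}^g, \phi'\rangle_{X} \cdot \langle \psi\circ\mathsf{B}^g,\psi'\rangle_{Y}, \]
where the first factor tends to $\bigl(\int\phi\bigr)\overline{\bigl(\int\phi'\bigr)} = 0$ and the second stays bounded by $\|\psi\|\,\|\psi'\|$. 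Here the convergence $\langle\phi\circ\mathsf{A}^g,\phi'\rangle_X \to \bigl(\int\phi\bigr)\overline{\bigl(\int\phi'\bigr)}$ is precisely the $L^2$ reformulation of strong mixing of $\mathsf{A}$: for indicator functions it is the defining relation $\mu(\mathsf{A}^gC\cap D)\to\mu(C)\mu(D)$, and it extends to all of $L^2(X,\mu)$ by approximating with simple functions (the convention that $\lim_{g\to\infty}$ is taken over cofinite sets makes the direction of the shift irrelevant). Consequently $\langle U^g F_1^{\epsilon}, F_1^{\epsilon}\rangle \to 0$ for every finite linear combination $F_1^{\epsilon}$ of such mean-zero elementary tensors.

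To pass from these finite sums to the given $F_1$, I would use that $U^g$ is an isometry: approximating $F_1$ in norm by a finite combination $F_1^{\epsilon}$ as above and expanding $\langle U^g F_1, F_1\rangle$ around $\langle U^g F_1^{\epsilon}, F_1^{\epsilon}\rangle$ produces error terms bounded by $2\|F_1\|\,\|F_1 - F_1^{\epsilon}\|$ uniformly in $g$, whence $\limsup_{g\to\infty}\,|\langle U^g F_1, F_1\rangle|$ is controlled by the approximation error and must vanish. Therefore $\|F_1\| = 0$, so $F = F_0$ is constant and $\mathsf{A}\parr\mathsf{B}$ is ergodic. The one genuinely delicate point is this last density step together with the reduction of set-theoretic strong mixing to operator convergence on $L^2_0(X,\mu)$; the rest is the standard orthogonal-splitting bookkeeping. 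I expect the uniform-in-$g$ approximation estimate to be the main thing to get right, since it is exactly where the isometry of $U^g$ is needed to upgrade the elementary-tensor computation to arbitrary $F_1$.
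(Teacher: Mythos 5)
Your argument is correct, and it takes a genuinely different route from the paper: the paper gives no proof of this proposition at all, deriving it as an immediate consequence of Theorem 3.11 in \cite{MR1958753} (and remarking that weak mixing of $\mathsf{A}$ already suffices), whereas you give a self-contained Koopman-operator proof. Your decomposition $L^2(X\times Y,\mu\times\nu) = \bigl(\mathbf{1}_X\otimes L^2(Y,\nu)\bigr)\oplus\bigl(L^2_0(X,\mu)\otimes L^2(Y,\nu)\bigr)$, the disposal of the first summand by ergodicity of $\mathsf{B}$, and the vanishing of $\langle U^gF_1,F_1\rangle$ via strong mixing are all sound, and the two steps you flag as delicate do go through: the upgrade of set-theoretic mixing to $\langle\phi\circ\mathsf{A}^g,\phi'\rangle\to(\int\phi)\overline{(\int\phi')}$ on all of $L^2$ follows by bilinearity, the uniform bound $\|\phi\|\,\|\phi'\|$ from unitarity, and density of simple functions; and the passage from finite sums of mean-zero elementary tensors to a general $F_1$ is exactly the $2\|F_1\|\,\|F_1-F_1^{\epsilon}\|$ estimate you state, which is uniform in $g$ because $U^g$ is an isometry. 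What the paper's citation buys is the weaker hypothesis (weak mixing of $\mathsf{A}$ suffices, though this is irrelevant for the Bernoulli shifts to which the proposition is actually applied); what your proof buys is transparency and self-containedness. One caveat attaches to the statement rather than to your proof: with the paper's convention for $\lim_{g\to\infty}$, a finite group $G$ makes the strong-mixing hypothesis vacuously true of every action and the conclusion false (e.g.\ the swap action of $\mathbb{Z}/2$ diagonally on four points), so the hypothesis that $G$ is infinite is implicitly needed at the step where you conclude $\|F_1\|^2=\lim_{g\to\infty}\langle U^gF_1,F_1\rangle=0$; this is harmless here since the paper only invokes the proposition for countably infinite $G$.
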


In particular, since the diagonal action $\mathsf{A} \parr \mathsf{B}$ corresponds to the restriction of $\mathsf{A} \sqq \mathsf{B}$ to the diagonal subgroup $\{(g,g):g \in G\}$, we obtain the following.

\begin{corollary} \label{cor.mixing} Let $G$ be a countable group  and let $(X,\mu)$ and $(Y,\nu)$ be standard probability spaces. Also let $\mathsf{A} \in \mrm{Act}(G,X,\mu)$ and $\mathsf{B} \in \mrm{Act}(G,Y,\nu)$ be such that $\mathsf{A}$ is strongly mixing and $\mathsf{B}$ is ergodic. Then the local product $\mathsf{A} \sqq \mathsf{B} \in \mrm{Act}(G \times G,X \times Y,\mu \times \nu)$ is ergodic. \end{corollary}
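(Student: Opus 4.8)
The plan is to exploit the observation, already flagged in the sentence preceding the statement, that $\mathsf{A} \parr \mathsf{B}$ is precisely the restriction of the local product $\mathsf{A} \sqq \mathsf{B}$ to the diagonal subgroup $\Delta = \{(g,g):g \in G\}$ of $G \times G$. Concretely, evaluating the local product on a diagonal element gives $(\mathsf{A} \sqq \mathsf{B})^{g,g}(x,y) = (\mathsf{A}^g x, \mathsf{B}^g y) = (\mathsf{A} \parr \mathsf{B})^g(x,y)$, so under the canonical identification $\Delta \cong G$ the restricted action coincides with the diagonal action $\mathsf{A} \parr \mathsf{B} \in \mrm{Act}(G, X \times Y, \mu \times \nu)$. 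This reduces the corollary to Proposition \ref{thm.mixing} together with a general principle relating ergodicity of a subgroup action to that of the ambient group.

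First I would invoke Proposition \ref{thm.mixing} with the given hypotheses, namely $\mathsf{A}$ strongly mixing and $\mathsf{B}$ ergodic, to conclude that $\mathsf{A} \parr \mathsf{B}$ is ergodic. Next I would argue that ergodicity is inherited upward along the inclusion $\Delta \hookrightarrow G \times G$: if $D \in \eusc{M}_{X \times Y}$ is invariant under the full action $\mathsf{A} \sqq \mathsf{B}$ of $G \times G$, then in particular $D$ is invariant under every diagonal element $(g,g)$, and hence $D$ is an $\mathsf{A} \parr \mathsf{B}$-invariant set. Ergodicity of $\mathsf{A} \parr \mathsf{B}$ then forces $(\mu \times \nu)(D) \in \{0,1\}$. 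Since $D$ was an arbitrary invariant set for $\mathsf{A} \sqq \mathsf{B}$, the invariant $\sigma$-algebra of the local product equals $\eusc{N}_{X \times Y}$, which is exactly the assertion that $\mathsf{A} \sqq \mathsf{B}$ is ergodic.

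There is no substantial obstacle: the entire analytic content sits in Proposition \ref{thm.mixing}, and the two remaining ingredients are purely formal. The first is the identification of the diagonal restriction with the diagonal action, which is immediate from the definitions of $\sqq$ and $\parr$; the second is the elementary remark that any set invariant under a group action is \emph{a fortiori} invariant under the action of every subgroup, so that ergodicity of a subaction upgrades to ergodicity of the full action. Accordingly I would keep the writeup to a couple of sentences, presenting the result as the immediate consequence of Proposition \ref{thm.mixing} that the surrounding discussion already advertises.
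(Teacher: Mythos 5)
Your argument is correct and is exactly the paper's: the corollary is derived there in one sentence by observing that $\mathsf{A} \parr \mathsf{B}$ is the restriction of $\mathsf{A} \sqq \mathsf{B}$ to the diagonal subgroup, so that any $(\mathsf{A} \sqq \mathsf{B})$-invariant set is $(\mathsf{A} \parr \mathsf{B})$-invariant and Proposition \ref{thm.mixing} forces its measure to be $0$ or $1$. Nothing to add.
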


We also recall that a \pmp action $\sf{A} \in \mrm{Act}(G,X,\mu)$ is defined to be \textbf{free} if for all nontrivial elements $g \in G$ we have: \[ \mu(\{x \in X: \mathsf{A}^gx = x \}) = 0  \] Consider two standard probability spaces $(X,\mu)$ and $(Y,\nu)$ and suppose that $\sf{A} \in \mrm{Act}(G,X,\mu)$ and $\sf{B} \in \mrm{Act}(G,Y,\nu)$ are free actions. Then for any $(g,h) \in G \times G$ we have: \[ \bigl\{(x,y) \in X \times Y: (\mathsf{A}^gx,\mathsf{B}^hy) = (x,y)\bigr\} = \bigl(\{x \in X: \mathsf{A}^gx = x \} \times Y\bigr)  \cap \bigl(X \times \{y \in Y: \mathsf{B}^hy = y \} \bigr) \] If $(g,h)$ is nontrivial as an element of $G \times G$, then at least one of the sets in the intersection on the right of the previous display has $(\mu \times \nu)$-measure zero. Thus we obtain the following.

\begin{proposition} \label{prop.free} Let $(X,\mu)$ and $(Y,\nu)$ be standard probability spaces and suppose that $\sf{A} \in \mrm{Act}(G,X,\mu)$ and $\sf{B} \in \mrm{Act}(G,Y,\nu)$ are free actions. Then the local product $\mathsf{A} \sqq \mathsf{B} \in \mrm{Act}(G \times G,X \times Y,\mu \times \nu)$ is free. \end{proposition}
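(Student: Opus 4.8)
The plan is to verify freeness directly from its definition: for each nontrivial element $(g,h) \in G \times G$ I will identify the set of points in $X \times Y$ fixed by the transformation $(\mathsf{A} \sqq \mathsf{B})^{g,h}$ and show it is $(\mu \times \nu)$-null. The whole argument reduces to recognizing that the fixed-point set of the local product splits as an intersection of two ``cylindrical'' fixed-point sets, each of which is controlled by the freeness of one of the two factor actions.

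First I would fix an arbitrary $(g,h) \in G \times G$ and unwind the definition of the local product. Since $(\mathsf{A} \sqq \mathsf{B})^{g,h}(x,y) = (\mathsf{A}^g x, \mathsf{B}^h y)$, a point $(x,y)$ is fixed precisely when $\mathsf{A}^g x = x$ and $\mathsf{B}^h y = y$ hold simultaneously. This lets me record the fixed-point set as a product-intersection:
\[ \bigl\{(x,y) \in X \times Y: (\mathsf{A}^gx,\mathsf{B}^hy) = (x,y)\bigr\} = \bigl(\{x \in X: \mathsf{A}^gx=x\} \times Y\bigr) \cap \bigl(X \times \{y \in Y: \mathsf{B}^hy=y\}\bigr). \]

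Next I would invoke the hypothesis that $(g,h)$ is nontrivial as an element of $G \times G$, which forces at least one of $g$ or $h$ to be nontrivial in $G$. If $g$ is nontrivial, freeness of $\mathsf{A}$ yields $\mu\bigl(\{x \in X: \mathsf{A}^gx=x\}\bigr) = 0$, whence $\{x \in X: \mathsf{A}^gx=x\} \times Y$ is $(\mu \times \nu)$-null; by monotonicity of measure under passing to a subset, the intersection displayed above is then null as well. The case where $h$ is nontrivial is entirely symmetric and instead uses freeness of $\mathsf{B}$ together with the nullity of $X \times \{y \in Y: \mathsf{B}^hy=y\}$. Since $(g,h)$ was an arbitrary nontrivial element, this establishes that $\mathsf{A} \sqq \mathsf{B}$ is free.

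I do not expect a genuine obstacle here, as the statement is essentially a bookkeeping exercise: the only points requiring any care are the standard measure-theoretic facts that a $\mu$-null set crossed with all of $Y$ remains $(\mu \times \nu)$-null and that measure is monotone under intersection. If anything, the mildest subtlety is simply ensuring one accounts for both coordinates, i.e.\ that nontriviality of $(g,h)$ in the \emph{product} group need only guarantee one nontrivial coordinate rather than both, which is exactly what makes the single-factor null set suffice.
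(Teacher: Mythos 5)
Your proposal is correct and follows exactly the paper's argument: the paper likewise writes the fixed-point set of $(\mathsf{A}\sqq\mathsf{B})^{g,h}$ as the intersection $\bigl(\{x: \mathsf{A}^gx=x\}\times Y\bigr)\cap\bigl(X\times\{y:\mathsf{B}^hy=y\}\bigr)$ and observes that nontriviality of $(g,h)$ forces one of the two cylinders to be $(\mu\times\nu)$-null. There is nothing to add.
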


Now, let $\upsilon$ denote the uniform measure on the $2$-point set $\{0,1\}$ and let $G$ be a countable group, initially assumed to be arbitrary. Also let \[ \mathsf{L}_G \sqq \mathsf{L}_G \in \mrm{Act}\left(G \times G, \{0,1\}^G \times \{0,1\}^G,\upsilon^G \times \upsilon^G \right)\] be the local product of the left Bernoulli $G$-shift over $(X,\mu)$ with itself. Since the action $\mathsf{L}_G$ is strongly mixing and free as long as $G$ is infinite, by combining Corollary \ref{cor.mixing} and Proposition \ref{prop.free} we find that $\sf{L}_G \sqq \sf{L}_G$ is free and ergodic.\\
\\
Assume now that $G$ is amenable. According to Proposition 13.2 in \cite{MR2583950}, this implies that the isomorphic copies of any free ergodic action of $G$ are dense in $\mrm{Act}(G \times G,X,\mu)$ for any diffuse standard probability space $(X,\mu)$. By applying this to $\sf{L}_G \sqq \sf{L}_G$, we find that $G$ is richly synergodic, and so the proof of Clause (a) in Theorem \ref{thm.amenable} is complete.

\subsection{Generalities on $C^\ast$-algebras} \label{sec.seestar}

In Subsection \ref{sec.seestar} we recall some basic constructions in the theory of $C^\ast$-algebras. If $\mathfrak{A}$ is a $C^\ast$-algebra, we will write $||\cdot||_\mathfrak{A}$ for the norm of $\mathfrak{A}$. If $\mathcal{H}$ is a Hilbert space, we write $\mrm{End}(\mathcal{H})$ for the $C^\ast$-algebra of all bounded operators on $\mathcal{H}$. In this case we write $||\cdot||_{\mrm{op}}$ instead of $||\cdot||_{\mrm{End}(\mathcal{H})}$ and $\mrm{Un}(\mathcal{H})$ instead of $\mrm{Un}(\mrm{End}(\mathcal{H}))$. As usual, we will refer to an $\ast$-homomorphism from $\mathfrak{A}$ to $\mrm{End}(\mathcal{H})$ as an $\ast$\textbf{-representation of }$\mathfrak{A}$\textbf{ on }$\mathcal{H}$. \\
\\
We let $\mathfrak{A} \odot \mathfrak{B}$ denote the \textbf{algebraic tensor product of }$\mathfrak{A}$\textbf{ and }$\mathfrak{B}$, which is an $\ast$-algebra without a specified norm. Given a $\ast$-algebra $\mathfrak{C}$ and two $\ast$-homomorphisms $\alpha:\mathfrak{A} \to \mathfrak{C}$ and $\beta:\mathfrak{B} \to \mathfrak{C}$ which commute in the sense that $\alpha(a)\beta(b) = \beta(b)\alpha(a)$ for all $a \in A$ and $b \in b$, we let $\alpha \odot \beta$ denote the \textbf{algebraic tensor product of }$\alpha$\textbf{ and }$\beta$, which is an $\ast$-homomorphism from $\mathfrak{A} \odot \mathfrak{B}$ to $\mathfrak{C}$ that extends $\alpha$ and $\beta$ in the natural way. We let $\mathfrak{A} \ootimes \mathfrak{B}$ denote the \textbf{maximal }$C^\ast$\textbf{-tensor product of }$\mathfrak{A}$\textbf{ and }$\mathfrak{B}$, which is the $C^\ast$-algebra given by completing $\mathfrak{A} \odot \mathfrak{B}$ in the norm defined on an element $\phi \in \mathfrak{A} \odot \mathfrak{B}$ as follows. \begin{equation} \label{eq.maximal} ||\phi||_{\mathfrak{A} \ootimes \mathfrak{B}} = \sup \Bigl\{ ||(\alpha \odot \beta)(\phi)||_{\mrm{op}}: \alpha \mbox{ and }\beta \mbox{ are commuting }\ast\mbox{-representations of }\mathfrak{A} \mbox{ and }\mathfrak{B} \Bigr \} \end{equation} We also let $\mathfrak{A} \uotimes \mathfrak{B}$ denote the \textbf{minimal }$C^\ast$\textbf{-tensor product of }$\mathfrak{A}$\textbf{ and }$\mathfrak{B}$, which is the $C^\ast$-algebra given by completing $\mathfrak{A} \odot \mathfrak{B}$ in the norm defined on an element $\phi \in \mathfrak{A} \odot \mathfrak{B}$ as follows. \begin{equation} \label{eq.minimal} ||\phi||_{\mathfrak{A} \ootimes \mathfrak{B}} = \sup \Bigl\{ ||(\alpha \otimes \beta)(\phi)||_{\mrm{op}}: \alpha \mbox{ and }\beta \mbox{ are }\ast\mbox{-representations of }\mathfrak{A} \mbox{ and }\mathfrak{B} \Bigr \} \end{equation}  We refer the reader to Chapter 3 in \cite{MR2391387} for more information on these tensor products.\\
\\
 If $G$ is a countable group we let $\mathbb{C}[G]$ denote the group ring of $G$ with complex coefficients. We enrich $\mathbb{C}[G]$ with the structure of an $\ast$-algebra by defining $\phi^\ast(g) = \ov{\phi(g^{-1})}$ for $\phi \in \mathbb{C}[G]$. For a Hilbert space $\mathcal{H}$ and $\rho \in \mrm{Rep}(G,\mathcal{H})$, using linear extension we can define an element $\rho(\phi)$ of $\mrm{End}(\mathcal{H})$ for any $\phi \in \mathbb{C}[G]$. Moreover, the map $\phi \mapsto \rho(\phi)$ is an $\ast$-homomorphism from $\mathbb{C}[G]$ to $\mrm{End}(\mathcal{H})$. We also write $C^\ast(G)$ for the full group $C^\ast$-algebra of $G$ as in Section 2.5 of \cite{MR2391387}, and note that an element of $\mathbb{C}[G \times G]$ can be naturally identified with an element of the algebraic tensor product $C^\ast(G) \odot C^\ast(G)$.

\subsection{Ergodic theory to representation theory} \label{sec.actrep}

In Subsection \ref{sec.actrep} we discuss some connections between global aspects of ergodic theory and representation theory. Given a countable group $G$ and a separable Hilbert space $\mathcal{H}$, we let $\mrm{Rep}(G,\mathcal{H})$ be the set of unitary representations of $G$ on $\mathcal{H}$. We endow the unitary group $\mrm{Un}(\mathcal{H})$ with the strong operator topology, which is defined by the stipulation that a sequence $(u_n)_{n \in \mathbb{N}}$ of elements of $\mrm{Un}(\mathcal{H})$ converges to $u \in \mrm{Un}(\mathcal{H})$ if and only if we have \[ \lim_{n \to \infty} ||(u_n - u)\vartheta||_\mathcal{H} = 0 \] for all $\vartheta \in \mathcal{H}$. We then endow $\mrm{Rep}(G,\mathcal{H})$ with the topology it inherits as a closed subspace of $\mrm{Un}(\mathcal{H})^G$. The group $\mrm{Un}(\mathcal{H})$ acts on $\mrm{Rep}(G,\mathcal{H})$ by letting $[u \cdot \rho](g) = u^{-1}\rho(g)u$ for $\rho \in \mrm{Rep}(G,\mathcal{H}), u \in \mrm{Un}(\mathcal{H})$ and $g \in G$. \\
\\
If $(X,\mu)$ is a standard probability space and $\sf{T} \in \mrm{Aut}(X,\mu)$ we write $\mathbf{k}_\mathsf{T}$ for the \textbf{Koopman operator of }$\sf{T}$, which is the element of $\mrm{Un}(L^2(X,\mu))$ defined by setting \[ [\mathbf{k}_\mathsf{T}(g)\psi](x) = \psi(\mathsf{T}^{-1}x)  \] for $\psi \in L^2(X,\mu)$ and $x \in X$. Given $\sf{A} \in \mrm{Act}(G,X,\mu)$, we also write $\mathbf{k}_\sf{A}$ for the \textbf{Koopman representation of }$\sf{A}$, which is the element of $\mrm{Rep}(G,L^2(X,\mu))$ defined by letting $\mathbf{k}_\sf{A}(g) = \mathbf{k}_{\sf{A}^g}$ for $g \in G$. The following result appears as Proposition H.14 in \cite{MR2583950}.

\begin{proposition} \label{prop.onee} For any countable group the family of representations \[ \bigl\{u \cdot \mathbf{k}_\mathsf{A}:\mathsf{A} \in \mrm{Act}(G,X,\mu), u \in \mrm{Un}(L^2(X,\mu)) \bigr\} \] is dense in $\mrm{Rep}(G,L^2(X,\mu))$.\end{proposition}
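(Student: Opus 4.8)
The plan is to reduce the density statement to two facts and to isolate the genuinely representation-theoretic content in a Gaussian construction. Throughout write $\mathcal{H} = L^2(X,\mu)$, which I take to be separable and infinite-dimensional (i.e. $(X,\mu)$ diffuse, the case relevant here). Recall that the topology on $\mrm{Rep}(G,\mathcal{H})$ tests a representation only against finitely many group elements applied to finitely many vectors, so it suffices to show that for every $\rho \in \mrm{Rep}(G,\mathcal{H})$, every finite $F \subseteq G$, all $\vartheta_1,\dots,\vartheta_k \in \mathcal{H}$, and every $\epsilon > 0$, there is an action $\mathsf{A}$ and a unitary $u$ with $\|[u\cdot\mathbf{k}_\mathsf{A}](g)\vartheta_i - \rho(g)\vartheta_i\|_\mathcal{H} < \epsilon$ for all $g\in F$ and all $i$. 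I would obtain this from (Lemma A) a Hilbert-space extension principle and (Lemma B) the statement that every $\rho$ embeds as a genuine subrepresentation of some Koopman representation $\mathbf{k}_\mathsf{A}$.

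For Lemma A, suppose $V: \mathcal{H} \to \mathcal{H}$ is an isometric intertwiner from $\rho$ into $\sigma := \mathbf{k}_\mathsf{A}$, so $V\rho(g) = \sigma(g)V$ for all $g$. Given the finite data above, set $E = \mrm{span}\{\rho(g)\vartheta_i : g\in F\cup\{e\},\ 1\le i\le k\}$, a finite-dimensional subspace. Since $E$ and $V(E)$ are finite-dimensional, their orthogonal complements are both infinite-dimensional and separable, hence isomorphic; choosing any unitary between them and gluing it to $V|_E$ produces a genuine $u\in\mrm{Un}(\mathcal{H})$ with $u|_E = V|_E$. For $g\in F$ one then has $\sigma(g)u\vartheta_i = \sigma(g)V\vartheta_i = V\rho(g)\vartheta_i = u\rho(g)\vartheta_i$, because $\rho(g)\vartheta_i \in E$, so $[u\cdot\mathbf{k}_\mathsf{A}](g)\vartheta_i = u^{-1}\sigma(g)u\vartheta_i = \rho(g)\vartheta_i$ \emph{exactly} on the chosen data. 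Thus membership of $\rho$ in the closure follows once $\rho$ is realized as a subrepresentation of a Koopman representation.

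The substantive step is Lemma B, which I would prove by a Gaussian action. Let $K$ be $\mathcal{H}$ regarded as a real Hilbert space and let $\pi = \rho_\mathbb{R}$ be the associated orthogonal representation of $G$ on $K$. The Gaussian measure-preserving action $\mathsf{A}_\pi$ attached to $(K,\pi)$ has complex Koopman representation decomposing over the Wiener chaos as $\bigoplus_{n\ge 0}\mrm{Sym}^n(K_\mathbb{C})$, with first chaos $K_\mathbb{C}$ carrying the complexification $\pi_\mathbb{C}$. Since $\pi_\mathbb{C} = (\rho_\mathbb{R})_\mathbb{C}$ is isomorphic to $\rho\oplus\bar\rho$, the representation $\rho$ is an honest subrepresentation of $\mathbf{k}_{\mathsf{A}_\pi}$. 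As the Gaussian space is a diffuse standard probability space, hence isomorphic to $(X,\mu)$, transporting $\mathsf{A}_\pi$ through a probability-space isomorphism yields an action $\mathsf{A}\in\mrm{Act}(G,X,\mu)$ whose Koopman representation on $L^2(X,\mu)$ contains $\rho$; together with Lemma A this finishes the proof.

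I expect the Gaussian realization in Lemma B to be the main obstacle, since it is precisely what forces us beyond Bernoulli shifts, whose Koopman representations capture only representations weakly contained in the regular representation and so cannot reach an arbitrary $\rho$ for nonamenable $G$. Two points require care: the passage from the complex target $\rho$ to a real representation, handled by realification at the harmless cost of the conjugate summand $\bar\rho$ (we only need an embedding); and the $G$-equivariant identification of the first chaos with $K_\mathbb{C}$. Both are standard properties of the Gaussian functor. The remaining ingredients — the finite-dimensional unitary extension of Lemma A and the reduction to finite data — are routine, and the hypothesis that $(X,\mu)$ is diffuse is exactly what supplies the infinite-dimensionality used throughout.
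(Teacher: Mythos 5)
The paper does not give its own proof of this statement; it simply cites it as Proposition H.14 of \cite{MR2583950}. Your argument is correct and follows the same standard route as that source: realize $\rho$ as a genuine subrepresentation of some $\mathbf{k}_\mathsf{A}$ via the Gaussian action attached to the realification of $\rho$ (using $(\rho_{\mathbb{R}})_{\mathbb{C}} \cong \rho \oplus \bar{\rho}$ in the first chaos), and then upgrade the isometric intertwiner to an honest unitary by patching on the finite-dimensional span of the test data, which is exactly where the diffuseness of $(X,\mu)$ is needed.
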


In Section (B) of Chapter 1 in \cite{MR2583950} it is shown that the map $\sf{T} \to \mathbf{k}_\mathsf{T}$ from $\mrm{Aut}(X,\mu)$ to $\mrm{Un}(L^2(X,\mu))$ is continuous. Hence the map $\mathsf{A} \mapsto \mathbf{k}_\mathsf{A}$ from $\mrm{Act}(G,X,\mu)$ to $\mathrm{Rep}(G,L^2(X,\mu))$ is continuous. Thus we we obtain the following corollary of Proposition \ref{prop.onee}.

\begin{corollary} \label{cor.three} Let $G$ be a countable group and let $S$ be a dense subset of $\mrm{Act}(G,X,\mu)$. Then we have that the family \[ \bigl\{u \cdot \mathbf{k}_\mathsf{A}:\mathsf{A} \in S, u \in \mrm{Un}(L^2(X,\mu)) \bigr\} \] is dense in $\mrm{Rep}(G,L^2(X,\mu))$. \end{corollary}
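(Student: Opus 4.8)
The plan is to deduce this directly from Proposition \ref{prop.onee} together with the continuity of the Koopman map, exactly as the surrounding discussion suggests. The only observation needed beyond the already-recorded continuity of $\mathsf{A} \mapsto \mathbf{k}_\mathsf{A}$ is that for each fixed $u \in \mrm{Un}(L^2(X,\mu))$ the conjugation map $\rho \mapsto u \cdot \rho$ is continuous on $\mrm{Rep}(G,L^2(X,\mu))$ in the topology inherited from the strong operator topology. This is routine: if $\rho_n \to \rho$ in $\mrm{Rep}(G,\mathcal{H})$, then for each $g \in G$ and each $\vartheta \in \mathcal{H}$ we have $[u \cdot \rho_n](g)\vartheta = u^{-1}\rho_n(g)(u\vartheta)$, and since $\rho_n(g) \to \rho(g)$ strongly while $u^{-1}$ is a fixed bounded operator, this converges to $u^{-1}\rho(g)(u\vartheta) = [u \cdot \rho](g)\vartheta$; hence $u \cdot \rho_n \to u \cdot \rho$.

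First I would fix $u$ and consider the composite map $\Psi_u:\mrm{Act}(G,X,\mu) \to \mrm{Rep}(G,L^2(X,\mu))$ defined by $\Psi_u(\mathsf{A}) = u \cdot \mathbf{k}_\mathsf{A}$. Since $\mathsf{A} \mapsto \mathbf{k}_\mathsf{A}$ is continuous and conjugation by the fixed unitary $u$ is continuous by the previous paragraph, the map $\Psi_u$ is continuous. I would then invoke the elementary topological fact that a continuous map carries a dense subset of its domain to a dense subset of its image: because $S$ is dense in $\mrm{Act}(G,X,\mu)$, the set $\Psi_u(S) = \{u \cdot \mathbf{k}_\mathsf{A}:\mathsf{A} \in S\}$ is dense in $\Psi_u(\mrm{Act}(G,X,\mu)) = \{u \cdot \mathbf{k}_\mathsf{A}:\mathsf{A} \in \mrm{Act}(G,X,\mu)\}$. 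In particular, every element $u \cdot \mathbf{k}_\mathsf{A}$ with $\mathsf{A} \in \mrm{Act}(G,X,\mu)$ lies in the closure of $\{u \cdot \mathbf{k}_{\mathsf{A}'}:\mathsf{A}' \in S\}$.

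To finish, I would take the union over all $u \in \mrm{Un}(L^2(X,\mu))$. The previous step gives the inclusion $\{u \cdot \mathbf{k}_\mathsf{A}:\mathsf{A} \in \mrm{Act}(G,X,\mu), u \in \mrm{Un}(L^2(X,\mu))\} \subseteq \overline{\{u \cdot \mathbf{k}_\mathsf{A}:\mathsf{A} \in S, u \in \mrm{Un}(L^2(X,\mu))\}}$. Taking closures and applying Proposition \ref{prop.onee}, which asserts that the left-hand family is already dense in $\mrm{Rep}(G,L^2(X,\mu))$, yields that the family indexed by $\mathsf{A} \in S$ is dense as well. I do not anticipate a genuine obstacle here, as the substantive content is entirely contained in Proposition \ref{prop.onee} and the corollary is a soft functional-analytic consequence; the one point requiring an explicit line of verification is the strong-operator continuity of conjugation by a fixed unitary, which I would state precisely in order to keep the density-of-continuous-image argument rigorous.
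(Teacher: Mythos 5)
Your proposal is correct and follows the same route as the paper: the paper derives the corollary directly from Proposition \ref{prop.onee} together with the continuity of $\mathsf{A} \mapsto \mathbf{k}_\mathsf{A}$, leaving the routine details (continuity of conjugation by a fixed unitary and the fact that continuous maps send dense sets to dense subsets of the image) implicit. You have simply written out those details explicitly, and they check out.
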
 

We now show the following connection between operator norms of group ring elements and the topology of $\mrm{Rep}(G,\mathcal{H})$.

\begin{proposition} \label{prop.three} For any group $G$ and any $\phi \in \mathbb{C}[G]$ we have the the function $\rho \mapsto ||\rho(\phi)||_{\mrm{op}}$ is a lower semicontinuous map from $\mrm{Rep}(G,\mathcal{H})$ to $\mathbb{R}$. \end{proposition}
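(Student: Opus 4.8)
The plan is to exploit the standard fact that the operator norm is lower semicontinuous with respect to the strong operator topology, combined with the observation that $\rho \mapsto \rho(\phi)$ is strongly continuous because $\phi$ has finite support. The whole argument is to realize $||\rho(\phi)||_{\mrm{op}}$ as a supremum of continuous evaluation functionals.

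First I would fix $\phi \in \mathbb{C}[G]$ and write $\rho(\phi) = \sum_{g \in G} \phi(g)\rho(g)$, noting that this is a finite sum since $\phi$ is finitely supported. For a fixed vector $\vartheta \in \mathcal{H}$, I would argue that the map $\rho \mapsto \rho(\phi)\vartheta$ is continuous from $\mrm{Rep}(G,\mathcal{H})$ to $\mathcal{H}$: by definition of the topology on $\mrm{Rep}(G,\mathcal{H})$ as a subspace of $\mrm{Un}(\mathcal{H})^G$ with the strong operator topology, if $\rho_n \to \rho$ then $\rho_n(g)\vartheta \to \rho(g)\vartheta$ in $\mathcal{H}$ for each $g$, and since only finitely many terms appear, $\rho_n(\phi)\vartheta = \sum_g \phi(g)\rho_n(g)\vartheta$ converges in norm to $\rho(\phi)\vartheta$. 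Composing with the (continuous) norm on $\mathcal{H}$, the function $f_\vartheta:\rho \mapsto ||\rho(\phi)\vartheta||_{\mathcal{H}}$ is continuous for each fixed $\vartheta$.

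The key step is then to express the operator norm as a supremum of these continuous functions, namely $||\rho(\phi)||_{\mrm{op}} = \sup\{ ||\rho(\phi)\vartheta||_{\mathcal{H}} : \vartheta \in \mathcal{H},\ ||\vartheta||_{\mathcal{H}} \le 1 \} = \sup_{||\vartheta|| \le 1} f_\vartheta(\rho)$. A pointwise supremum of an arbitrary family of continuous real-valued functions is lower semicontinuous, which gives the claim directly, with no appeal to metrizability or to sequences. If one prefers a concrete sequential restatement, then whenever $\rho_n \to \rho$ one has, for each $\vartheta$ in the unit ball, $f_\vartheta(\rho) = \lim_n f_\vartheta(\rho_n) \le \liminf_n ||\rho_n(\phi)||_{\mrm{op}}$, and taking the supremum over $\vartheta$ yields $||\rho(\phi)||_{\mrm{op}} \le \liminf_n ||\rho_n(\phi)||_{\mrm{op}}$; for countable $G$ and separable $\mathcal{H}$ the space $\mrm{Rep}(G,\mathcal{H})$ is metrizable, so this sequential version is equivalent to the topological statement.

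I do not expect any serious obstacle here: the entire content is that the operator norm is the upper envelope of the strongly continuous evaluation functionals $f_\vartheta$, and all continuity claims reduce to the finiteness of the support of $\phi$. The only point warranting a moment of care is that the norm can only be lower (and not upper) semicontinuous, since a supremum of continuous functions can jump up in the limit; this is exactly why the statement is phrased with lower semicontinuity rather than continuity.
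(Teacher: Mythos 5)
Your proof is correct and follows essentially the same route as the paper: both arguments realize $||\rho(\phi)||_{\mrm{op}}$ as a supremum over unit vectors of functionals that are continuous in $\rho$ because $\phi$ has finite support, and then invoke lower semicontinuity of an upper envelope. The only cosmetic difference is that the paper works with the matrix coefficients $\langle \rho(\phi^\ast\phi)\vartheta,\vartheta\rangle$ while you work directly with $||\rho(\phi)\vartheta||_{\mathcal{H}}$; these carry the same content.
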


\begin{proof}[Proof of Proposition \ref{prop.three}] First observe that for any $\vartheta \in \mathcal{H}$ we have: \[ ||\rho(\phi)\vartheta||^2 = \langle \rho(\phi) \vartheta, \rho(\phi) \vartheta \rangle = \langle \rho(\phi)^\ast \rho(\phi) \vartheta, \vartheta \rangle = \langle \rho(\phi^\ast \phi) \vartheta, \vartheta \rangle \] Therefore can express the operator norm as follows: \begin{equation} ||\rho(\phi)||_{\mrm{op}}^2 = \sup \bigl \{ \langle \rho(\phi^\ast \phi) \vartheta,  \vartheta \rangle : \vartheta \in \mathcal{H} \mbox{ is a unit vector}  \bigr\} \label{eq.opnorm} \end{equation} Now, suppose that $(\rho_n)_{n \in \mathbb{N}}$ is a sequence of elements of $\mrm{Rep}(G,\mathcal{H})$ which converges to an element $\rho \in \mrm{Rep}(G,\mathcal{H})$. Fix $\phi \in \mathbb{C}[G]$ and a unit vector $\vartheta \in \mathcal{H}$.  From the definition of the topology on $\mrm{Rep}(G,\mathcal{H})$, we see that the following holds for all $g \in G$: \[ \lim_{n \to \infty} \langle \rho_n(g) \vartheta, \vartheta \rangle = \langle \rho(g)\vartheta, \vartheta \rangle \] Since $\phi$ is finitely supported, it follows that: \[ \lim_{n \to \infty} \langle \rho_n(\phi^\ast \phi) \vartheta, \vartheta \rangle = \langle \rho(\phi^\ast \phi)\vartheta, \vartheta \rangle \] By applying (\ref{eq.opnorm}) to $\rho_n$, we see that the following holds for all $n \in \mathbb{N}$. \[ ||\rho_n(\phi)||_{\mrm{op}}^2 \geq \langle \rho_n(\phi^\ast \phi) \vartheta, \vartheta \rangle \] Combining the previous two displays we obtain: \[ \liminf_{n \to \infty} ||\rho_n(\phi)||_{\mrm{op}}^2 \geq \langle \rho(\phi^\ast \phi)\vartheta, \vartheta \rangle  \] Since $\vartheta$ was an arbitrary unit vector in $\mathcal{H}$, we can combine the previous display with (\ref{eq.opnorm}) to find \[ \liminf_{n \to \infty} ||\rho_n(\phi)||_{\mrm{op}}^2 \geq ||\rho(\phi)||_{\mrm{op}}^2 \] and so the proof of Proposition \ref{prop.three} is complete. \end{proof}

It is clear that for any $\rho \in \mrm{Rep}(G,\mathcal{H})$, any $u \in \mrm{Un}(\mathcal{H})$ and any $\phi \in \mathbb{C}[G]$ we have $||[u \cdot \rho](\phi)||_{\mrm{op}} = ||\rho(\phi)||_{\mrm{op}}$. Thus from Corollary \ref{cor.three} and Proposition \ref{prop.three} we obtain the following.

\begin{corollary} \label{cor.four} Let $G$ be a countable group and let $\phi \in \mathbb{C}[G]$. Also let $S$ be a dense subset of $\mrm{Act}(G,X,\mu)$.  Then for any unitary representation $\rho$ of $G$ on a Hilbert space we have: \[ ||\rho(\phi)||_{\mrm{op}} \leq \sup\{ ||\mathbf{k}_\mathsf{A}(\phi)||_{\mrm{op}}: \mathsf{A} \in S \} \] \end{corollary}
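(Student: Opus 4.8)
The plan is to read off the bound from the three ingredients assembled immediately before the statement: the density provided by Corollary \ref{cor.three}, the lower semicontinuity of $\rho \mapsto ||\rho(\phi)||_{\mrm{op}}$ from Proposition \ref{prop.three}, and the conjugation-invariance $||[u \cdot \rho](\phi)||_{\mrm{op}} = ||\rho(\phi)||_{\mrm{op}}$. I would first dispose of the case of a representation on $L^2(X,\mu)$ itself, where all three facts apply directly, and then reduce a representation on an arbitrary Hilbert space to that case.

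Concretely, suppose first that $\rho \in \mrm{Rep}(G,L^2(X,\mu))$. By Corollary \ref{cor.three} I can choose a sequence $(\mathsf{A}_n)_{n \in \mathbb{N}}$ in $S$ and a sequence $(u_n)_{n \in \mathbb{N}}$ in $\mrm{Un}(L^2(X,\mu))$ with $u_n \cdot \mathbf{k}_{\mathsf{A}_n} \to \rho$. Applying Proposition \ref{prop.three} along this sequence gives
\[ ||\rho(\phi)||_{\mrm{op}} \leq \liminf_{n \to \infty} ||[u_n \cdot \mathbf{k}_{\mathsf{A}_n}](\phi)||_{\mrm{op}}. \]
By the conjugation-invariance of the operator norm each term on the right equals $||\mathbf{k}_{\mathsf{A}_n}(\phi)||_{\mrm{op}}$, and since $\mathsf{A}_n \in S$ this is at most $\sup\{||\mathbf{k}_\mathsf{A}(\phi)||_{\mrm{op}}: \mathsf{A} \in S\}$. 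The desired inequality follows in this case.

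For an arbitrary $\rho$ on a Hilbert space $\mathcal{H}$, I would transport $\rho$ onto $L^2(X,\mu)$ while controlling $||\rho(\phi)||_{\mrm{op}}$. Fix $\epsilon > 0$ and a unit vector $\vartheta$ with $||\rho(\phi)\vartheta|| \geq ||\rho(\phi)||_{\mrm{op}} - \epsilon$, and restrict $\rho$ to the separable $\rho(G)$-invariant subspace generated by $\vartheta$; this restriction $\rho'$ does not increase the norm of $\phi$ and keeps it at least $||\rho(\phi)||_{\mrm{op}} - \epsilon$. Replacing $\rho'$ by the countable direct sum of itself with itself leaves $||\rho'(\phi)||_{\mrm{op}}$ unchanged and yields a representation on an infinite-dimensional separable Hilbert space, which I identify unitarily with $L^2(X,\mu)$; as unitary conjugation preserves the operator norm, the resulting $\tilde\rho \in \mrm{Rep}(G,L^2(X,\mu))$ satisfies $||\tilde\rho(\phi)||_{\mrm{op}} \geq ||\rho(\phi)||_{\mrm{op}} - \epsilon$. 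Applying the $L^2$ case to $\tilde\rho$ and letting $\epsilon \to 0$ gives the claim.

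The main obstacle is this last reduction: Corollary \ref{cor.three} only delivers density in $\mrm{Rep}(G,L^2(X,\mu))$, so the genuine content beyond the $L^2$ case is the passage from an arbitrary Hilbert space to $L^2(X,\mu)$ without disturbing $||\rho(\phi)||_{\mrm{op}}$. This uses implicitly that $(X,\mu)$ is diffuse, so that $L^2(X,\mu)$ is an infinite-dimensional separable Hilbert space and the unitary identification above is available; each individual step (restriction to a cyclic subspace, amplification by countably many copies, unitary transport) is routine, but together they are precisely what upgrade the conclusion from representations on $L^2(X,\mu)$ to representations on an arbitrary Hilbert space.
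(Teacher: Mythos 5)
Your argument is correct and follows the same route the paper takes: the paper derives the corollary in one line from Corollary \ref{cor.three} (density of conjugated Koopman representations), Proposition \ref{prop.three} (lower semicontinuity of $\rho \mapsto ||\rho(\phi)||_{\mrm{op}}$), and the conjugation-invariance $||[u \cdot \rho](\phi)||_{\mrm{op}} = ||\rho(\phi)||_{\mrm{op}}$, exactly as in your first paragraph. Your second step --- cutting down to a cyclic separable subspace, amplifying by a countable direct sum, and transporting unitarily onto $L^2(X,\mu)$ --- is a correct filling-in of a reduction the paper leaves entirely implicit (including the tacit requirement that $(X,\mu)$ be diffuse so that $L^2(X,\mu)$ is infinite-dimensional and separable).
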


\subsection{Rich synergodicity and tensor products}

We now establish the following result connecting synergodicity and the minimal tensor product norm.

\begin{proposition} \label{prop.cross} Let $G$ be a countable group and let $(\mathsf{A},\mathsf{B}) \in \mrm{Act}(G \times G,X,\mu)$ be a synergodic action. Then for any $\phi \in \mathbb{C}[G \times G]$ we have: \[ ||\mathbf{k}_{\mathsf{A},\mathsf{B}}(\phi) ||_{\mrm{op}} \leq ||\phi||_{\uotimes} \]  \end{proposition}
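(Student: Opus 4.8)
The plan is to exploit the local product structure that synergodicity forces upon $(\mathsf{A},\mathsf{B})$. By Corollary \ref{cor.informal}, since $(\mathsf{A},\mathsf{B})$ is synergodic it is isomorphic as an action of $G \times G$ to a local product $\mathsf{C} \sqq \mathsf{D}$ of two actions $\mathsf{C} \in \mrm{Act}(G,X_1,\mu_1)$ and $\mathsf{D} \in \mrm{Act}(G,X_2,\mu_2)$ of $G$; concretely one may take $\mathsf{C} = \mathsf{E}_{\mathsf{A} \curvearrowright \mathsf{B}}$ and $\mathsf{D} = \mathsf{E}_{\mathsf{B} \curvearrowright \mathsf{A}}$. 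An isomorphism of actions induces a unitary intertwiner of the associated Koopman representations, so for every $\phi \in \mathbb{C}[G \times G]$ the operator $\mathbf{k}_{\mathsf{A},\mathsf{B}}(\phi)$ is unitarily conjugate to $\mathbf{k}_{\mathsf{C} \sqq \mathsf{D}}(\phi)$, and in particular $||\mathbf{k}_{\mathsf{A},\mathsf{B}}(\phi)||_{\mrm{op}} = ||\mathbf{k}_{\mathsf{C} \sqq \mathsf{D}}(\phi)||_{\mrm{op}}$. It therefore suffices to bound the latter quantity.

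I would then identify the Koopman representation of the local product with a tensor product of representations. Under the canonical unitary $L^2(X_1 \times X_2,\mu_1 \times \mu_2) \cong L^2(X_1,\mu_1) \otimes L^2(X_2,\mu_2)$, evaluation on product functions shows that $\mathbf{k}_{\mathsf{C} \sqq \mathsf{D}}(g,h) = \mathbf{k}_\mathsf{C}(g) \otimes \mathbf{k}_\mathsf{D}(h)$ for each $(g,h) \in G \times G$. Let $\pi_\mathsf{C}$ and $\pi_\mathsf{D}$ be the $\ast$-representations of $C^\ast(G)$ obtained from the unitary representations $\mathbf{k}_\mathsf{C}$ and $\mathbf{k}_\mathsf{D}$ by the universal property of the full group $C^\ast$-algebra. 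Since $\pi_\mathsf{C} \otimes \pi_\mathsf{D}$ agrees with $(g,h) \mapsto \mathbf{k}_\mathsf{C}(g) \otimes \mathbf{k}_\mathsf{D}(h)$ on group elements, extending linearly through the identification of $\mathbb{C}[G \times G]$ with a subspace of $C^\ast(G) \odot C^\ast(G)$ gives $(\pi_\mathsf{C} \otimes \pi_\mathsf{D})(\phi) = \mathbf{k}_{\mathsf{C} \sqq \mathsf{D}}(\phi)$.

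The conclusion is then immediate from the definition (\ref{eq.minimal}) of the minimal tensor norm. Indeed, $\pi_\mathsf{C}$ and $\pi_\mathsf{D}$ are $\ast$-representations of $C^\ast(G)$, so $\pi_\mathsf{C} \otimes \pi_\mathsf{D}$ is among the representations over which the supremum defining $||\cdot||_{\uotimes}$ is taken. Combining the displays above yields $||\mathbf{k}_{\mathsf{A},\mathsf{B}}(\phi)||_{\mrm{op}} = ||(\pi_\mathsf{C} \otimes \pi_\mathsf{D})(\phi)||_{\mrm{op}} \leq ||\phi||_{\uotimes}$, as required.

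I do not anticipate a serious obstacle. Once the local product structure is available, the estimate reduces to the elementary fact that the minimal tensor norm dominates every spatial product representation, a fact built directly into the definition (\ref{eq.minimal}). The only points requiring care are bookkeeping ones: tracking the unitary equivalence coming from the action isomorphism, and verifying the factorization $\mathbf{k}_{\mathsf{C} \sqq \mathsf{D}}(g,h) = \mathbf{k}_\mathsf{C}(g) \otimes \mathbf{k}_\mathsf{D}(h)$ under the tensor decomposition of the $L^2$ space. It is worth noting that the argument uses only that a synergodic action splits as a local product, and not the ergodicity of its factors.
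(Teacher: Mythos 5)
Your proposal is correct and follows essentially the same route as the paper: pass to the local product via the synergodic decomposition, identify the Koopman representation of the local product with the tensor product $\mathbf{k}_\mathsf{C} \otimes \mathbf{k}_\mathsf{D}$ under $L^2(X_1 \times X_2) \cong L^2(X_1) \otimes L^2(X_2)$, and invoke the definition (\ref{eq.minimal}) of the minimal tensor norm. Your citation of Corollary \ref{cor.informal} is in fact the more accurate pointer for the splitting step, and your closing observation that only the local product structure is used is correct.
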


\begin{proof}[Proof of Proposition \ref{prop.cross}] According to Theorem \ref{thm.locprod}, there exist standard probability spaces $(Y,\nu)$ and $(Z,\omega)$ along with actions $\sf{C} \in \mrm{Act}(G,Y,\nu)$ and $\sf{D} \in \mrm{Act}(G,Z,\omega)$ such that $(\sf{A},\sf{B})$ is isomorphic to the local product $\sf{C} \sqq \sf{D}$. Then we have $\mathbf{k}_{\mathsf{C} \square \mathsf{D}} = \mathbf{k}_\mathsf{C} \otimes \mathsf{k}_\mathsf{D}$ where $\mathbf{k}_\mathsf{C}: G\to \mrm{Un}(L^2(Y,\nu))$ and $\mathbf{k}_\mathsf{D}:G \to \mrm{Un}(L^2(Z,\omega))$  are the individual Koopman representations and the tensor product corresponds to the Hilbert space decomposition: \[ L^2(X,\mu) \cong L^2(Y,\nu) \otimes L^2(Z,\omega) \] It follows directly from the definition (\ref{eq.minimal}) that: \[ ||(\mathbf{k}_\mathsf{C} \otimes \mathbf{k}_\mathsf{D}) (\phi)||_{\mrm{op}} \leq ||\phi||_{\uotimes} \] Thus we find $||\mathbf{k}_{\mathsf{C} \square \sf{D}} (\phi)||_{\mrm{op}} \leq ||\phi||_{\uotimes}$ and so $||\mathbf{k}_{\sf{A},\sf{B}}(\phi)||_{\mrm{op}} \leq ||\phi||_{\uotimes}$ as required. \end{proof}

From Corollary \ref{cor.four} and Proposition \ref{prop.cross} it follows that for a richly synergodic group $G$ and any $\phi \in \mathbb{C}[G \times G]$ we have $||\phi||_{\ootimes} \leq ||\phi||_{\uotimes}$. Since the reverse inequality is trivial, we obtain the following.

\begin{corollary} \label{cor.rich} Suppose $G$ is a richly synergodic group. Then we have $||\phi||_{\ootimes} = ||\phi||_{\uotimes}$ for all $\phi \in \mathbb{C}[G \times G]$. \end{corollary}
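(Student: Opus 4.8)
The plan is to establish only the inequality $||\phi||_{\ootimes} \leq ||\phi||_{\uotimes}$, since the reverse inequality holds between the minimal and maximal norms on any algebraic tensor product of $C^\ast$-algebras. The conceptual pivot of the argument is a reformulation of the maximal tensor norm in purely representation-theoretic terms. Indeed, by the definition (\ref{eq.maximal}) the quantity $||\phi||_{\ootimes}$ is a supremum of $||(\alpha \odot \beta)(\phi)||_{\mrm{op}}$ over commuting pairs of $\ast$-representations $\alpha,\beta$ of $C^\ast(G)$; under the universal property of $C^\ast(G)$ and the natural identification of $\mathbb{C}[G \times G]$ with an element of $C^\ast(G) \odot C^\ast(G)$, such a commuting pair is precisely the data of a unitary representation of $G \times G$. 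First I would therefore record the identity
\[ ||\phi||_{\ootimes} = \sup \bigl\{ ||\rho(\phi)||_{\mrm{op}} : \rho \text{ a unitary representation of } G \times G \bigr\} \]
valid for every $\phi \in \mathbb{C}[G \times G]$.

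Granting this, I would reduce to bounding $||\rho(\phi)||_{\mrm{op}}$ by $||\phi||_{\uotimes}$ uniformly over representations $\rho$ of $G \times G$, and for this I would feed $G \times G$ into the apparatus of Subsection \ref{sec.actrep}. Fixing a diffuse standard probability space $(X,\mu)$, the remark following the definition of rich synergodicity lets me replace the hypothesis that $G$ is richly synergodic by the density of the synergodic actions in $\mrm{Act}(G \times G, X, \mu)$; I then take $S$ to be this dense set of synergodic actions in Corollary \ref{cor.four}, applied to the group $G \times G$. That corollary supplies, for every unitary representation $\rho$ of $G \times G$,
\[ ||\rho(\phi)||_{\mrm{op}} \leq \sup \bigl\{ ||\mathbf{k}_{\mathsf{A},\mathsf{B}}(\phi)||_{\mrm{op}} : (\mathsf{A},\mathsf{B}) \in S \bigr\}. \]
Since each $(\mathsf{A},\mathsf{B}) \in S$ is synergodic, Proposition \ref{prop.cross} bounds each Koopman term on the right by $||\phi||_{\uotimes}$, so the right-hand supremum is itself at most $||\phi||_{\uotimes}$.

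Combining the two displays with the reformulation of the maximal norm gives $||\phi||_{\ootimes} \leq ||\phi||_{\uotimes}$, and together with the trivial reverse inequality this finishes the proof. The step I expect to require the most care --- and the only genuinely non-bookkeeping point --- is the opening reformulation of $||\phi||_{\ootimes}$ as a supremum over representations of $G \times G$: one must verify that passing from commuting $\ast$-representations of $C^\ast(G)$ to unitary representations of the product group is a bijection that intertwines the evaluation of elements of $\mathbb{C}[G \times G]$, so that no extremal representation is omitted. Everything downstream of this identity is a direct concatenation of Corollary \ref{cor.four} and Proposition \ref{prop.cross}, with the role of rich synergodicity confined entirely to certifying that the synergodic actions form a legitimate dense set $S$.
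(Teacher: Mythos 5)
Your proposal is correct and follows the paper's own route: the paper likewise deduces $||\phi||_{\ootimes} \leq ||\phi||_{\uotimes}$ by combining Corollary \ref{cor.four} (applied to $G \times G$ with $S$ the dense set of synergodic actions) with Proposition \ref{prop.cross}, and notes the reverse inequality is trivial. The reformulation of $||\phi||_{\ootimes}$ as a supremum over unitary representations of $G \times G$, which you rightly flag as the one point needing care, is exactly the identification the paper uses implicitly via the universal property of $C^\ast(G)$.
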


\subsection{Proof of Clause (b) in Theorem \ref{thm.amenable}} In the theory of operator algebras, the phrase `Connes' embedding conjecture' can refer to any of a proliferation of statements which have been shown to be equivalent to the original conjecture from \cite{MR0454659}. Of relevance to us is a formulation from \cite{MR1218321} that asserts Connes' embedding conjecture is equivalent to the statement that for some $d \geq 2$ we have: \[ C^\ast(\mathbb{F}_d) \ootimes C^\ast(\mathbb{F}_d) = C^\ast(\mathbb{F}_d) \uotimes C^\ast(\mathbb{F}_d)   \]  (See also Section 15 in \cite{MR3067294} for a somewhat more direct exposition.) Examining the definitions in (\ref{eq.maximal}) and (\ref{eq.minimal}), we see that the above equality of $C^\ast$-algebras is equivalent to the assertion that $||\phi||_{\ootimes} = ||\phi||_{\uotimes}$ for all $\phi \in \mathbb{C}[\mathbb{F}_d \times \mathbb{F}_d]$. Using Corollary \ref{cor.rich} we find the assertion that $\mathbb{F}_d$ is richly synergodic for some $d \geq 2$ implies Connes' embedding conjecture. Thus the refutation in \cite{10.1145/3485628} implies that $\mathbb{F}_d$ is not richly synergodic.

\bibliographystyle{plain}

\bibliography{/Users/pburton/Desktop/bib/pjburton-bibliography.bib}

\end{document}